\documentclass[11pt]{amsart}
\usepackage{amssymb}
\usepackage{mathtools}
\usepackage[hidelinks]{hyperref}
\usepackage{braket}

\newtheorem{theorem}{Theorem}[section]
\newtheorem{lemma}[theorem]{Lemma}
\newtheorem{proposition}[theorem]{Proposition}
\newtheorem{corollary}[theorem]{Corollary}
\theoremstyle{remark}
\newtheorem{remark}[theorem]{Remark}
\newtheorem{example}[theorem]{Example}
\theoremstyle{plain}
\newtheorem{theoremA}{Theorem}

\numberwithin{equation}{section}

\newcommand{\intZ}{\mathbb{Z}}
\newcommand{\ratQ}{\mathbb{Q}}
\newcommand{\ratQp}{\ratQ_p}
\newcommand{\intring}[1]{O_{#1}}
\newcommand{\OK}{\intring{K_b}}
\newcommand{\poldisc}[1]{\operatorname{disc}(#1)}
\newcommand{\disc}[1]{D_{#1}}
\DeclareMathOperator{\Norm}{N}

\begin{document}

\title[Cubic fields with explicit fundamental units]{On an infinite family of
cubic fields with explicit fundamental units}

\author{Iwao Kimura}
\address{Department of Mathematics, University of Toyama,
3190 Gofuku, Toyama 930-8555, Japan}
\email{iwao@sci.u-toyama.ac.jp}

\author{Hikaru Umemoto}
\address{Graduate School of Science and Engineering, University of Toyama,
3190 Gofuku, Toyama 930-8555, Japan}
\email{m25c1008@ems.u-toyama.ac.jp}

\subjclass[2020]{11R16, 11R27, 11D09, 11R37}
\keywords{cubic field, fundamental unit, integral basis, Pell equation, class field tower}

\begin{abstract}
For an integer $b\neq0,1$ let $\theta$ be the unique real root of
$f_b(x)=x^3-3bx-b^3$ and let $K_b=\ratQ(\theta)$. We exhibit an explicit set of
$b$ of positive density for which $\eta_b=-1/(\theta-(b+1))$ is the fundamental
unit of $K_b$, and we determine the set of $b$ for which $\eta_b$ is the square of
a unit: it is parametrized by the Pell equation $D^2-3E^2=1$, hence infinite, and
under an explicit mild condition on the field discriminant it accounts for all $b$
for which $\eta_b$ is not the fundamental unit. That condition fails for only four
$b$ with $|b|\le3000$, and at one of them, $b=3$, the conclusion itself fails. In
the order $\intZ[\eta_b]$ generated by $\eta_b$, by contrast, $\eta_b$ is the
fundamental unit for every $b$, so these exceptions are a phenomenon of the
maximal order. As an application we construct infinitely many biquadratic fields
whose $3$-class field tower has length greater than $1$.
\end{abstract}

\maketitle

\bibliographystyle{amsplain}

\section{Introduction}

\subsection{The family and its origin}\label{sec:family}
Let $\intZ$ be the ring of rational integers and $\ratQ$ the field of rational
numbers. For $b\in\intZ$ put
\[
  f_b(x)=x^3-3bx-b^3\in\intZ[x],
\]
whose discriminant is $\poldisc{f_b(x)}=-27\,b^3(b^3-4)$.
Then $\poldisc{f_b(x)}<0$ if and only if $b\neq0,1$, and \emph{throughout the
paper $b$ denotes an integer with $b\neq0,1$.} For such $b$ the field $K_b=\ratQ(\theta)$
generated by the unique real root $\theta$ of $f_b(x)$ is a cubic
field which is not totally real; the number $r_1$ of real embeddings of $K_b$ and
the number $r_2$ of conjugate pairs of complex embeddings are both one. Hence, by Dirichlet's unit
theorem, the unit group $\OK^\times$ of the ring $\OK$ of integers of $K_b$ has
free part of rank one. The two excluded values fail for
different reasons: $f_0(x)=x^3$ is reducible, whereas $f_1(x)=x^3-3x-1$ is
irreducible with $\poldisc{f_1(x)}=81>0$, so $K_1$ is the \emph{totally real} cyclic cubic field
of conductor $9$.

Numerical exploration suggests the following, where
\[
  \epsilon_b\coloneqq\theta-(b+1), \quad
  \text{or equivalently,}\quad \eta_b \coloneqq \frac{-1}{\epsilon_b} = \frac{1}{(b+1)-\theta} .
\]
\begin{enumerate}
\item[\textup{(i)}] $\eta_b$ generates $\OK^\times$ modulo $\pm1$ for infinitely
many $b$; the exceptions are sparse, but infinite in number and governed by a Pell
equation.
\item[\textup{(ii)}] The exceptional $b$ are those for which the index
$[\OK:\intZ[\theta]]$ is anomalously large.
\end{enumerate}
We prove both. The Pell equation governs exactly the $b$ for which $\eta_b$ is a
\emph{square}; we call these the \emph{square-exceptions}. That they are all the
exceptions is proved under an explicit mild
condition on the discriminant $\disc{K_b}$ of $K_b$; the sporadic value $b=3$
shows that some such condition is needed.

The family $f_b(x)$ is a close relative of the family
\begin{equation}\label{eq:sister}
  x^3-3x-b^3,
\end{equation}
introduced by Ishida~\cite{MR959788} in his construction of cubic fields
admitting an unramified cyclic cubic extension. For~\eqref{eq:sister} the element
$\varepsilon=1/(1-b(\theta-b))$ is a unit; Kaneko~\cite{MR2037576,MR3363170}
proved it to be the \emph{fundamental} unit for all but finitely many $b$ and
determined the integral bases, Lee--Spearman~\cite{MR2773123} determined the
finite exceptional set, and Kaneko~\cite{MR3363170} used the family to construct,
via a criterion of Yoshida~\cite{MR1970518,MR2037577}, biquadratic fields whose
$3$-class field tower has length $>1$. The two families are subfamilies of
$x^3-3x-n$: by Remark~\ref{rem:sister} the element $\zeta=(\theta^2-2b)/b$ of $K_b$
satisfies $\zeta^3-3\zeta=b^3-2$, so $f_b(x)$ is the member $n=b^3-2$
and~\eqref{eq:sister} the member $n=b^3$. The Diophantine systems differ: for
$f_b(x)$ the square-unit condition is a Pell equation (Theorem~\ref{thmA:exc}), with
infinitely many solutions, whereas for~\eqref{eq:sister} it has finitely many.

Families in which a fundamental unit is given explicitly in terms of the
parameter have long been studied: real quadratic fields of Richaud--Degert
type~\cite{Degert1958}; the simplest cubic fields of Shanks~\cite{Shanks1974},
generated by a root $\rho$ of $x^3=ax^2+(a+3)x+1$, where $\rho$ and $1+\rho$
are units and the regulator is therefore known at once; and, for quartic
fields, the families of Fleckinger and
V\'erant~\cite{FleckingerVerant1995,Verant1997}, parametrized by torsion
points of an elliptic curve, where the ring of integers is determined along
with the units, as it is here. The family of~\cite{Verant1997} is the closest
analogue of ours: it is totally imaginary, hence of unit rank one; it is
infinite because $t^2-2^6$ is squarefree for infinitely many $t$; and its
generating root is proved to be a fundamental unit by playing an upper bound
for that root against a lower bound for the regulator, which is the argument
of \S\ref{sec:fund} below with a different lower bound in place of Artin's
inequality. Cutting across such families is the question of when a unit $u$ is
fundamental in the order $\intZ[u]$ that it generates. For unit rank one that
question has a complete answer, given by
Louboutin~\cite[Thm.~8]{Louboutin2015} \textup(the cubic case goes back to
Nagell~\cite{Nagell1930}\textup): one infinite family of exceptions, and eight
sporadic ones. Our family meets neither
\textup(Remark~\ref{rem:order}\textup), so $\eta_b$ is the fundamental unit of
$\intZ[\eta_b]$ for every $b$. The exceptions of Theorem~\ref{thmA:exc} are of
a different kind: they occur in the maximal order, in which $\intZ[\theta]$
has index $[\OK:\intZ[\theta]]>1$ for every $b\neq-1$.

\subsection{Main results}\label{sec:results}
Recall that $\theta$ denotes the unique real root of $f_b(x)$, and let $v_p$ denote the
$p$-adic additive valuation, normalized by $v_p(p)=1$.

We first determine the ring $\OK$ of integers. Since the discriminant is invariant
under translation, this amounts to computing the single index
$a\coloneqq[\OK:\intZ[\theta]]$, which we do prime by prime
(Theorems~\ref{thm:idxprime} and~\ref{thm:idx23}): away from $6$ one has
$v_p(a)=\lfloor\tfrac32v_p(b)\rfloor$ for $p\mid b$ and
$v_p(a)=\lfloor\tfrac12v_p(b^3-4)\rfloor$ for $p\nmid b$, and $v_2(a)$, $v_3(a)$
are given by explicit formulas as well. For squarefree $b$ with $3\nmid b$ and
$b^3-4$ squarefree this yields $a=|b|$, the integral basis
$\bigl\{1,\theta,(\theta^2-3b)/b\bigr\}$ and $\disc{K_b}=-27b(b^3-4)$
(Corollary~\ref{cor:idxmain}).

Unlike Ishida--Kaneko's family~\eqref{eq:sister}, where $2$ never divides the
index, here both $2$ and $3$ can divide it (Theorem~\ref{thm:idx23}). The prime
$3$ is totally and wildly ramified in $K_b$ when $3\nmid b$; when $3\mid b$ it is
unramified for $v_3(b)$ odd, and tamely ramified, $3=\mathfrak p_1\mathfrak p_2^2$
with $\mathfrak p_1\neq\mathfrak p_2$ primes of $\OK$, for $v_3(b)$ even. The
total ramification when $3\nmid b$ is what forces the hypothesis $9\mid b$ in the
construction of \S\ref{sec:tower}.

Our first main theorem determines the fundamental unit.

\begin{theoremA}[Fundamental unit; see Theorems~\ref{thm:main} and~\ref{thm:inf}]
\label{thmA:fund}
Let $b\in\intZ$ with $3\nmid b$ and $b^3-4$ squarefree, and either $b\ge7$ or
$b\le-1$. Then $\eta_{b}=-\epsilon_b^{-1}=\bigl((b+1)-\theta\bigr)^{-1}$ is the
fundamental unit of $K_b$.
This holds for a positive proportion of $b$; in particular for infinitely many $b$.
\end{theoremA}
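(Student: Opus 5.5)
The plan is to show that $\eta_b$ is a unit, pin down its size in the real embedding, and then use Artin's inequality to rule out $\eta_b$ being a proper power.

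\emph{Unit.} Since $\Norm(x-\theta)=f_b(x)$ for rational $x$, we have
\[
\Norm\bigl((b+1)-\theta\bigr)=f_b(b+1)=(b+1)^3-3b(b+1)-b^3=1 .
\]
So $\epsilon_b$ and $\eta_b$ are units of $\intZ[\theta]\subset\OK$. The only roots of unity in a non‑totally‑real cubic field are $\pm1$. It therefore suffices to show two things: $\eta_b>1$ in the real embedding, and $\eta_b\neq\pm u^k$ for any unit $u>1$ and any $k\ge2$.

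\emph{Size of $\eta_b$.} Because $f_b(b+1)=1>0$ and $f_b'(x)=3x^2-3b$ is large near $x=b+1$, the real root lies just to the left of $b+1$. A Newton/mean‑value estimate gives
\[
(b+1)-\theta=\frac{1}{f_b'(\xi)}\qquad\text{for some }\xi\in(\theta,b+1).
\]
Hence $\eta_b=f_b'(\xi)\approx 3b^2+3b+3$. I would make this explicit as
\[
3b^2\le \eta_b\le 3b^2+3|b|+3 \quad\text{for } b\ge 7 \text{ or } b\le -1 .
\]
For $b<0$ one must check separately that $\theta$ still lies in a short interval to the left of $b+1$ and that $f_b'$ is positive there. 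This is just sign bookkeeping on $f_b$ at $b+1$ and at $b+1-1/(2b^2)$, say. In particular $\eta_b>1$, and $\eta_b$ is the unit $>1$ of the real embedding.

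\emph{Discriminant lower bound.} If $4\mid b^3-4$ then $b^3-4$ is not squarefree, so the hypothesis forces $b$ odd; also $3\nmid b$. By Theorems~\ref{thm:idxprime} and~\ref{thm:idx23}, every prime dividing the index $a$ divides $b$, with $v_p(a)=\lfloor\tfrac32 v_p(b)\rfloor$. The $p$‑part of $b^3$ is $p^{3v_p(b)}$, and $3v_p(b)-2\lfloor\tfrac32v_p(b)\rfloor\ge0$. Hence
\[
|\disc{K_b}|=\frac{27|b|^3|b^3-4|}{a^2}\ge 27|b^3-4| .
\]
For squarefree $b$ this is exactly Corollary~\ref{cor:idxmain}, giving $27|b||b^3-4|$; but the weaker bound is all we need.

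\emph{Excluding powers via Artin.} Artin's inequality for complex cubic fields says that the fundamental unit $u>1$ satisfies $4u^3+24>|\disc{K_b}|$. Suppose $\eta_b=u^k$ with $k\ge2$. Then $u\le\eta_b^{1/2}\le(3b^2+3|b|+3)^{1/2}$, so
\[
u^3\le(3b^2+3|b|+3)^{3/2}\approx 5.2|b|^3 .
\]
Artin would then give $4\cdot 5.2|b|^3+24\gtrsim 27|b|^3-108$, which is false once $|b|$ is moderately large. The constants $3\sqrt3\cdot4\approx20.8<27$ leave real room, and the main obstacle is making this inequality rigorous with explicit error terms for every $b\ge7$ and $b\le-1$. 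The finitely many small $b$ where the crude estimates do not suffice I would check directly, either by sharpening the bound on $\eta_b$ or by computing. The excluded $b\in\{2,3,4,5,6\}$ presumably fail the hypotheses or the inequality, and $b=3$ is already excluded by $3\nmid b$.

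\emph{Positive proportion.} By Hooley's theorem on squarefree values of irreducible cubic polynomials, the set of $b$ with $b^3-4$ squarefree has a positive density, given by an Euler product. Here $b^3-4$ is irreducible over $\ratQ$ because $4$ is not a cube. The local factor at $3$ is unaffected in the relevant way: for $3\nmid b$ we have $b^3\equiv\pm1\pmod 9$, so $b^3-4\not\equiv0\pmod 9$. Hence imposing $3\nmid b$ restricts to residue classes of positive relative density, and intersecting with these classes keeps the density positive. This gives infinitely many $b$.
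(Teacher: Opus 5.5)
Your proposal follows the paper's proof of Theorems~\ref{thm:main} and~\ref{thm:inf}: an upper bound for $\eta_b$, the bound $|\disc{K_b}|\ge27|b^3-4|$ from the index formulas, Artin's inequality applied to the unit $u$ with $\eta_b=u^k$, and a squarefree-values theorem restricted to $3\nmid b$ (you use Hooley where the paper uses Erd\H{o}s). Two corrections are needed for $b\le-1$: your lower bound $3b^2\le\eta_b$ is false for $b\le-2$ (e.g.\ $\eta_{-5}\approx63.2<75$), though you never use it; and your upper bound $3b^2+3|b|+3$ does not close the comparison at $b=-5$, which lies in the regime ($|b^3-4|=129=3\cdot43$) and gives $27\cdot129=3483<4\cdot93^{3/2}+24\approx3611$ --- so your ``small cases'' are a real case, but it disappears if you use the paper's $\eta_b<P+1=3(b^2+b+1)+1$ (from $h(P+1)>0$, Lemma~\ref{lem:etasize}) or the bound $\eta_b=3\xi^2-3b<3b^2-3b$ that your own mean-value argument gives for $b\le-1$.
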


The infinitude rests on a theorem of Erd\H{o}s on squarefree values of
polynomials~\cite{MR56635}, and the proof of fundamentality combines the index
bound just described with a sharp form of an inequality of Artin
(Lemma~\ref{lem:artin}). One may moreover take $b$ to be \emph{prime}, of either
sign (Corollary~\ref{cor:primeb}), by a theorem of Helfgott~\cite{Helfgott2014}
on squarefree values of cubic polynomials at prime arguments.

Our second main theorem describes the exceptions.

\begin{theoremA}[The exceptional set; see Theorems~\ref{thm:iff},
\ref{thm:closed},~\ref{thm:starq} and~\ref{thm:exc}]\label{thmA:exc}
Let $(E_n)_{n\in\intZ}$ be the sequence defined by
\[
  E_n=\tfrac{1}{2\sqrt3}\bigl((2+\sqrt3)^n-(2-\sqrt3)^n\bigr),
\]
and put
\[
  b_{\pm}(n)=\tfrac12\bigl(E_{2n-1}-1\bigr)\pm2E_{n-1}\qquad(n\in\intZ).
\]
Then $\eta_b$ is a square in $\OK^\times$ if and only if
\[
  b\in\set{b_+(n) | n\in \intZ}\cup\set{b_{-}(n) | n\in\intZ},
\]
the degenerate values $b_-(0)=1$ and $b_+(1)=b_-(1)=b_-(-1)=0$ being excluded by
our standing convention $b\neq0,1$; there are infinitely many such $b$. If in
addition
\begin{equation}\label{eq:starA}
  |\disc{K_b}|\ \ge\ 4\bigl(3(b^2+b+1)+1\bigr)^{3/5}+24 ,
\end{equation}
then $\eta_b$ fails to be the fundamental unit \emph{precisely} when it is such a
square. Finally, every such $b$ other than $b=-3$ has $b^3-4$
\emph{non}-squarefree, with a prime $p\nmid b$ satisfying $p^2\mid b^3-4$; such
$p$ are exactly the primes $p\nmid b$ dividing $[\OK:\intZ[\theta]]$, and they are
characterized by $b^3\equiv4\pmod{p^2}$.
\end{theoremA}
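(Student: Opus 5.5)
The proof splits into four parts: the square criterion, the Pell parametrization, the fundamentality statement under \eqref{eq:starA}, and the arithmetic of the square-exceptions.

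\emph{Step 1: a square criterion.} Suppose $\eta_b=\pm u^2$ with $u\in\OK^\times$. Since $r_1=1$ and $\eta_b$ is positive at the real place for the relevant range of $b$, we may take $u$ real with $u^2=\eta_b$, or equivalently $\epsilon_b=-u^{-2}$. I will write $u^{-1}=x+y\theta+z\theta^2$. The coefficients $x,y,z$ lie in $\frac1a\intZ$, where $a=[\OK:\intZ[\theta]]$ is given by Theorems~\ref{thm:idxprime} and~\ref{thm:idx23}. Expanding $(x+y\theta+z\theta^2)^2$ and reducing with $\theta^3=3b\theta+b^3$ gives three equations:
\begin{itemize}
\item[] the $\theta^2$-coefficient equals $0$,
\item[] the $\theta$-coefficient equals $-1$,
\item[] the constant term equals $b+1$.
\end{itemize}
In addition, the norm of $u$ must be $\pm1$.

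A cleaner route is to work in $\ratQ(\sqrt{\eta_b})$. The polynomial $g(X)=\text{minpoly}(\eta_b)(X^2)$ factors over $\ratQ$ exactly when $\eta_b$ is a square in $K_b$. So I would compute the minimal polynomial $X^3-sX^2+tX-1$ of $\eta_b$, with $s,t$ explicit in $b$, and ask when $X^6-sX^4+tX^2-1$ has a cubic factor $X^3+cX^2+dX\mp1$ over $\intZ$. Comparing coefficients gives the conditions $c^2-2d=s$ and $d^2\mp2c=-t$ (up to sign conventions). From these, $b$ is determined by $c$, and the condition reduces to a norm equation in $\intZ[\sqrt3]$. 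I expect this equation to become $D^2-3E^2=1$ after completing squares; the $\sqrt3$ should come from the factor $3b$.

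\emph{Step 2: solve the Pell equation.} The solutions of $D^2-3E^2=1$ are $D_n+E_n\sqrt3=\pm(2+\sqrt3)^n$. Substituting back into the expression for $b$, and using the doubling identities $E_{2n-1}=E_n^2-E_{n-1}^2$ (or similar), yields the closed forms $b_\pm(n)$. This gives Theorems~\ref{thm:iff} and~\ref{thm:closed}. Infinitude follows because $|b_\pm(n)|\to\infty$. The degenerate values are checked directly.

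\emph{Step 3: fundamentality under \eqref{eq:starA}.} Suppose $\eta_b=\pm u^k$ for a fundamental unit $u$. I would use the sharp Artin inequality (Lemma~\ref{lem:artin}), which bounds $|\disc{K_b}|$ above by an expression in $u$, say $|\disc{K_b}|<4u^3+24$ for $u>1$. Choosing $u>1$, we have $u^k=\eta_b$ or $\eta_b^{-1}$, and $\eta_b$ is roughly of size $3(b^2+b+1)$ at the real place. Then $k\ge3$ gives $u\le(3(b^2+b+1)+1)^{1/3}$, and the Artin bound contradicts \eqref{eq:starA}. The exponent $3/5$ suggests that the actual bound is slightly different, $u^{5/3}$ or similar, so I would follow the form of Lemma~\ref{lem:artin}. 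Hence $k\in\{1,2\}$, and $k=2$ is exactly the square case of Step 1.

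\emph{Step 4: the arithmetic of the square-exceptions.} For the last claim, I would substitute $b=b_\pm(n)$ into $b^3-4$ and exhibit an explicit square factor coming from the Pell relation, for instance showing $E_{n-1}^2$ or $(D\pm\ldots)^2$ divides $b^3-4$ up to units. I would then check that this factor is nontrivial except when $b=-3$. The identification of these primes with the primes $p\nmid b$ dividing the index is Theorem~\ref{thm:idxprime}: there $v_p(a)=\lfloor v_p(b^3-4)/2\rfloor>0$ holds iff $p^2\mid b^3-4$.

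I expect the main obstacle to be Step 1: deriving the exact Pell equation from the factorization condition, and ensuring no solutions are lost when the index $a>1$ allows denominators. The polynomial-factorization approach sidesteps denominators entirely, which is why I would prefer it.
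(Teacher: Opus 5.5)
Your Step~1 follows the paper's route: $h(X^2)=-h_0(X)h_0(-X)$ is just Newton's identities for squares. But the elimination you only anticipate still has to be carried out. Substituting $3(b+1)=A^2-2B$ into $B^2-2A=3(b^2+b+1)$ gives a quadratic in $B$ with discriminant $12A(A+1)^2(A-2)$, hence $3A(A-2)=(3E)^2$, i.e.\ $(A-1)^2-3E^2=1$. The real gaps are in Steps~3 and~4.

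\textbf{Step 3: the cube case is not excluded.} Suppose $\eta_b=u^3$ with $u>1$. Lemmas~\ref{lem:etasize} and~\ref{lem:artin} give only $|\disc{K_b}|\le4u^3+24=4\eta_b+24<4(P+1)+24$, where $P=3(b^2+b+1)$. This is perfectly compatible with the lower bound $4(P+1)^{3/5}+24$ in \eqref{eq:starA}, and the same holds for $k=4$. The exponent $3/5$ is not a variant of Artin's inequality: it is $u^3\le\eta_b^{3/q}$ at $q=5$. So the Artin argument rules out exactly the prime exponents $q\ge5$ and nothing more. You should therefore reduce to prime exponents via \eqref{eq:fundiff}, not aim for $k\in\{1,2\}$; even $k$ give squares and are harmless, but your argument does not exclude them.

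The missing idea is a separate arithmetic treatment of cubes. Write the minimal polynomial of $u$ as $X^3-BX^2+AX-1$. Newton's identities for cubes give $A^3-3AB=3b$ and $B^3-A^3=3b^2$. After extracting factors of $3$ and a gcd, these become the cubic Thue equation $g^3-9dg^2+18d^2g-9d^3=a_1\in\{\pm1\}$, with $b=9a_1d^2(d-g)$. Its solutions give only $b\in\{0,9\}$ (Lemma~\ref{lem:cube9}), and $b=9=b_+(2)$ is itself a square-exception. Only then does ``not fundamental'' collapse to ``square'' under \eqref{eq:starA}. The elementary Lemma~\ref{lem:cube} covers $3\nmid b$, but \eqref{eq:starA} does not restrict to that case.

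\textbf{Step 4: the mechanism is missing.} No square factor of $b^3-4$ of the shape you guess exists in general: at $b=5=b_-(2)$ one has $E_1=1$, yet $b^3-4=11^2$. The paper works through the order $\intZ[\alpha]$, where $\eta_b=\alpha^2$ and $h_0(X)=X^3-BX^2+AX-1$ is the minimal polynomial of $\alpha$.
\begin{itemize}
\item Since $\intZ[\theta]=\intZ[\alpha^2]\subseteq\intZ[\alpha]$, the index is $m=|h_0(B)|=|AB-1|$.
\item Every prime $p\nmid b$ dividing $m$ divides $[\OK:\intZ[\theta]]$, so $b^3\equiv4\pmod{p^2}$ by Theorem~\ref{thm:idx}.
\item On the Pell parametrization, $b=uw$ and $m=|uz|$ with $u=A+1$ and $z=2A^2-2A-1-3EA$. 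At $b=5$ this gives $z=11$.
\end{itemize}
What must then be shown, and what ``check that the factor is nontrivial'' does not capture, is that $z$ has a prime factor \emph{not dividing $b$}. The paper does this with a prime-by-prime valuation argument, with delicate cases at $p=2,3$, showing that the $b$-part of $z$ divides $3(1+E)$. It also shows $|z|>3(|E|+1)$ once $|E|\ge6$. The remaining Pell solutions, $|E|\in\{0,1,4\}$, give six values of $b$, checked directly. Among them $b=-3$ is the true exception: there $m=9$ is a power of $3\mid b$.

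Finally, Theorem~\ref{thm:idxprime} identifies the relevant primes only for $p\ge5$. For $p\in\{2,3\}$ with $p\nmid b$ you also need Theorem~\ref{thm:idx23}, together with $b^3\not\equiv4$ modulo $4$ (resp.\ modulo $9$).
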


The hypothesis~\eqref{eq:starA} is mild: its right-hand side is
$\asymp|b|^{6/5}$, whereas $|\disc{K_b}|$ has size $|b|^4$ for generic $b$, and it
holds throughout the regime of Theorem~\ref{thmA:fund}. Among the $5999$ integers
$b\neq0,1$ with $|b|\le3000$ it fails for four, namely $b=-75,-3,3,9$. Three of
them are square-exceptions anyway; at $b=3$ the equivalence itself fails, so
\eqref{eq:starA} cannot be dropped (Remark~\ref{rem:star}).

The last assertion of Theorem~\ref{thmA:exc} makes precise our observation about
the index of the exceptional $b$: the hypotheses of Theorem~\ref{thmA:fund} exclude
every square-exception automatically, all of them by the squarefreeness of $b^3-4$
except $b=-3$, which is excluded by $3\nmid b$.

Finally, we carry out the analogue of Kaneko's construction: whenever $v_3(b)$ is
even and $\ge2$ \textup(so that $9\mid b$\textup) and $b\neq9$, the biquadratic
field $F_b=\ratQ\bigl(\sqrt{-3},\sqrt{b(b^3-4)}\bigr)$ has $3$-class field tower of
length greater than $1$, and there are infinitely many such $b$ of either sign
(Theorem~\ref{thm:tower}). As in \cite[\S3]{MR3363170} the proof rests on a
criterion of Yoshida~\cite{MR1970518,MR2037577}, applied to the unit $\eta_b$.

\subsection{Organization}
Section~\ref{sec:setup} fixes notation and the basic properties of $K_b$,
$\epsilon_b$ and $\eta_b$; \S\ref{sec:basis} computes the index and the integral
basis; \S\ref{sec:fund} proves Theorem~\ref{thmA:fund};
\S\ref{sec:exc} proves Theorem~\ref{thmA:exc}; and \S\ref{sec:tower} constructs
the biquadratic fields. These last four sections are, for $f_b(x)$, the analogues
of~\cite{MR2037576}, \cite{MR3363170}, \cite{MR2773123} and~\cite[\S3]{MR3363170}.

The numerical claims below were verified with PARI/GP \cite{PARI2}; the scripts
that verify them are available in a public repository \cite{KimuraCode}, which
records for each script the statements it checks.

\section{Setup and basic properties}\label{sec:setup}

We treat positive and negative $b$ uniformly, writing $c=-b$ for $b\le-1$ when
convenient. Recall that $f_b(x)=x^3-3bx-b^3$ and $\poldisc{f_b(x)}=-27b^3(b^3-4)$.
For an algebraic number $\alpha$ we write $\poldisc{\alpha}$ for the discriminant
of the minimal polynomial of $\alpha$ over $\ratQ$, so that
$\poldisc{\theta}=\poldisc{f_b(x)}$; $\disc{K}$ denotes the discriminant of a number
field $K$; and $\Norm$ abbreviates $\Norm_{K_b/\ratQ}$.

\begin{proposition}\label{prop:setup}
Let $b\in\intZ$ with $b\neq0,1$, let $\theta$ be the unique real root of $f_b(x)$, and
set $K_b=\ratQ(\theta)$, $\epsilon_b=\theta-(b+1)$ and
$\eta_b=-\epsilon_b^{-1}=(b+1-\theta)^{-1}$.
\begin{enumerate}
\item[\textup{(i)}] $K_b$ is a cubic field which is not totally real.
\item[\textup{(ii)}] $\Norm(\epsilon_b)=-1$, hence
$\epsilon_b=\theta-(b+1)$ is a unit of $\OK$ of norm $-1$, for every $b$.
\item[\textup{(iii)}] The real root satisfies $b<\theta<b+1$, so
$\epsilon_b\in(-1,0)$, and $\eta_b>1$.
\end{enumerate}
\end{proposition}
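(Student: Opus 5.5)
The plan is to deal with (ii) first, then (iii), and to obtain (i) as a byproduct.

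For (ii), note that $\theta$ is a root of the monic polynomial $f_b$, so
\[
  \Norm(\theta-c)=\prod_\sigma(\sigma\theta-c)=-f_b(c)
\]
for any integer $c$. The key step is therefore to evaluate $f_b(b+1)$:
\[
  (b+1)^3-3b(b+1)-b^3=b^3+3b^2+3b+1-3b^2-3b-b^3=1 .
\]
This gives $\Norm(\epsilon_b)=-1$. The element $\epsilon_b$ is an algebraic integer, since $\theta$ is, and its norm is $\pm1$, so it is a unit of $\OK$. At this stage we need irreducibility of $f_b$, so that $\Norm$ really is the field norm of degree $3$.

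For (i), I would argue irreducibility as follows. A rational root of the monic integer polynomial $f_b$ would be an integer $r$ with $r^3=3br+b^3$. Alternatively, the translate $g(y)=f_b(y+b+1)$ has constant term $f_b(b+1)=1$, so any rational root satisfies $y=\pm1$, that is, $\theta\in\{b,b+2\}$. Now compute
\[
  f_b(b)=-3b^2\neq0,\qquad f_b(b+2)=(b+2)^3-3b(b+2)-b^3=3b^2+6b+8\neq0,
\]
where the second expression has negative discriminant. Hence $f_b$ is irreducible and $K_b$ is cubic. Since $\poldisc{f_b}=-27b^3(b^3-4)<0$ for $b\neq0,1$ (as already noted in \S\ref{sec:family}), $f_b$ has exactly one real root, so $r_1=1$ and the field is not totally real.

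For (iii), use the intermediate value theorem with $f_b(b)=-3b^2<0$ and $f_b(b+1)=1>0$. This gives a real root in $(b,b+1)$, and by uniqueness of the real root it is $\theta$. Consequently $\epsilon_b=\theta-(b+1)\in(-1,0)$, and $\eta_b=-1/\epsilon_b>1$.

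I expect no real obstacle here; the main point is simply to notice the identity $f_b(b+1)=1$.
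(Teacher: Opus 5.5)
Your proof is correct, and for (ii) and (iii) it matches the paper's argument. The identity $f_b(b+1)=1$ gives $\Norm(\epsilon_b)=-f_b(b+1)=-1$. The signs $f_b(b)=-3b^2<0<1=f_b(b+1)$, together with uniqueness of the real root, give $b<\theta<b+1$.

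The difference is in how you get the irreducibility needed for (i).
\begin{itemize}
\item The paper proves (iii) first and reads irreducibility off it. The only real root lies strictly between the consecutive integers $b$ and $b+1$, so it is not an integer. A rational root of a monic integer polynomial would be a real integer root, so $f_b$ has none.
\item You apply the rational root test to the translate $f_b(y+b+1)$. This translate is monic with constant term $f_b(b+1)=1$; it is exactly the minimal polynomial $g$ of $\epsilon_b$ written after the proposition. So only $b$ and $b+2$ need checking, and both $f_b(b)=-3b^2$ and $f_b(b+2)=3(b+1)^2+5$ are nonzero.
\end{itemize}

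The paper's route is more economical: (iii) is needed anyway, and no further evaluation is required. But it relies on the uniqueness of the real root, hence on $\poldisc{f_b(x)}<0$. Yours costs one more computation. In exchange it is independent of the real-root count, so it proves irreducibility for every $b\neq0$, including $b=1$, where $f_1$ has three real roots.

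On ordering: logically (i) should precede (ii), since the norm computation in (ii) uses irreducibility. You flag this yourself, and your irreducibility argument uses only the polynomial value $f_b(b+1)=1$, not (ii), so there is no circularity.
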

\begin{proof}
Since $b\neq0,1$ we have $b^3(b^3-4)>0$, so $\poldisc{f_b(x)}<0$ and $f_b(x)$ has
exactly one real root $\theta$ and one conjugate pair of non-real roots. We prove
(iii) first and use it for (i).

(iii) $f_b(b)=-3b^2<0<1=f_b(b+1)$, so $f_b(x)$ has a real root in the open interval
$(b,b+1)$; as $\theta$ is its only real root, $b<\theta<b+1$. Hence
$\epsilon_b=\theta-(b+1)\in(-1,0)$ and $\eta_b=-\epsilon_b^{-1}>1$.

(i) By (iii) the only real root $\theta$ lies strictly between the consecutive
integers $b$ and $b+1$, so it is not a rational integer; as $f_b(x)$ is monic in
$\intZ[x]$ it therefore has no rational root, and being cubic it is irreducible.
Thus $K_b=\ratQ(\theta)$ is a cubic field with $r_1=r_2=1$, that is, one which is
not totally real.

(ii) $f_b(b+1)=(b+1)^3-3b(b+1)-b^3=1$. Writing $f_b(x)=\prod_i(x-\theta_i)$,
\[
  \Norm(\epsilon_b)=\prod_i(\theta_i-(b+1))=(-1)^3\prod_i\bigl((b+1)-\theta_i\bigr)
  =-f_b(b+1)=-1 ,
\]
so $\epsilon_b=\theta-(b+1)$ is a unit of norm $-1$.
\end{proof}

Translating by $x\mapsto x+(b+1)$ does not change the discriminant, so
$[\OK:\intZ[\epsilon_b]]=[\OK:\intZ[\theta]]$, and the minimal polynomial of
$\epsilon_b$ is $g(x)=x^3+3(b+1)x^2+3(b^2+b+1)x+1$.

\subsection{The fundamental unit}

By Dirichlet's unit theorem, $\OK^\times$ has rank $1$, and as $K_b$ has a unique
real embedding there is a unique unit $\eta_0$ that is $>1$ in that embedding with
$\OK^\times=\{\pm\eta_0^{\,k}:k\in\intZ\}$. We call $\eta_0$ the \emph{fundamental
unit} of $K_b$. (Every unit $u>1$ of a cubic field $K$ that is not totally real
has $\Norm_{K/\ratQ}(u)=u\,|\sigma_2(u)|^2>0$, where $\sigma_2$ is either of the
two complex embeddings of $K$, hence $\Norm_{K/\ratQ}(u)=+1$; in particular
$\Norm(\eta_0)=+1$.)

By Proposition~\ref{prop:setup}(iii) the unit $\epsilon_b$ lies in $(-1,0)$, so it
is neither positive nor $>1$ and cannot itself be the fundamental unit. Its
negative inverse is the natural candidate:
\[
  \eta_{b}\coloneqq-\frac1{\epsilon_b}=\bigl((b+1)-\theta\bigr)^{-1}>1,
  \quad \eta_b\approx3(b^2+b+1),
\]
a unit of norm $+1$ with minimal polynomial (the reciprocal of $g(x)$,
$x\mapsto-x$)
\[
  h(X)=X^3-PX^2+QX-1,\quad P=3(b^2+b+1),\quad Q=3(b+1).
\]
Since $\epsilon_b=-\eta_b^{-1}$ we have $\{\pm\epsilon_b^{\,k}\}=\{\pm\eta_b^{\,k}\}$,
so $\eta_b$ generates $\OK^\times$ modulo $\pm1$ if and only if $\epsilon_b$ does;
and since $\eta_b>1$, this happens exactly when $\eta_b=\eta_0$. Consequently
\begin{equation}\label{eq:fundiff}
  \eta_b=\eta_0\quad\text{if and only if}\quad
  \eta_b\ \text{is not a proper power}\ u^q\ (q\ \text{prime},\ u>1).
\end{equation}

The following bound on $\eta_b$, used in \S\ref{sec:fund}, is uniform in the sign
of $b$, which the sharper bound $\eta_b<P$ is not (Remark~\ref{rem:etaP}).

\begin{lemma}\label{lem:etasize}
For every $b\neq0,1$ one has $1<\eta_b<P+1$, where $P=3(b^2+b+1)$.
\end{lemma}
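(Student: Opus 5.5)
The lower bound $\eta_b>1$ is Proposition~\ref{prop:setup}(iii), so only $\eta_b<P+1$ needs proof. I will use the minimal polynomial $h(X)=X^3-PX^2+QX-1$ of $\eta_b$, with $P=3(b^2+b+1)$ and $Q=3(b+1)$. Since $K_b$ has one real embedding, $\eta_b$ is the only real root of $h$. Also $h(X)\to+\infty$ as $X\to+\infty$. So it suffices to show $h(P+1)>0$: then $h$ has a real root larger than $P+1$ only if it also changes sign again, which a cubic with a single real root cannot do. Concretely, $h$ is negative to the left of $\eta_b$ and positive to the right, so $h(P+1)>0$ forces $\eta_b<P+1$.

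Next I compute $h(P+1)$. We have
\[
h(P+1)=(P+1)^2\bigl((P+1)-P\bigr)+Q(P+1)-1=(P+1)^2+Q(P+1)-1=(P+1)(P+1+Q)-1 .
\]
Now $P+1+Q=3b^2+6b+7=3(b+1)^2+4>0$, and $P+1=3(b^2+b+1)+1\ge 4$ for all integers $b$, because $b^2+b+1\ge1$. Hence $h(P+1)\ge 4\cdot4-1>0$, which gives the claim.

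The only delicate point is the sign argument that turns $h(P+1)>0$ into $\eta_b<P+1$. I will use that the two non-real roots contribute the factor $X^2-2\operatorname{Re}(\sigma)X+|\sigma|^2$, which is positive for every real $X$, where $\sigma$ is a complex conjugate of $\eta_b$. Then the sign of $h(X)$ equals the sign of $X-\eta_b$. This makes the argument uniform in the sign of $b$, as the remark before the lemma requires, unlike a direct bound $\eta_b<P$ that would need $Q>0$.
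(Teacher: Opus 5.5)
Your proof is correct and essentially the paper's: both evaluate $h(P+1)=(P+1)(P+1+Q)-1$ and use $P+1+Q=3(b+1)^2+4>0$, with $\eta_b$ the unique real root of $h$. The differences are minor. The paper gets $\eta_b>1$ from $h(1)=Q-P=-3b^2<0$ rather than from Proposition~\ref{prop:setup}(iii). Your bound $(P+1)(P+1+Q)\ge16$ and your factorization through the positive quadratic factor spell out details the paper leaves implicit.
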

\begin{proof}
$\eta_b$ is the unique real root of $h(X)=X^3-PX^2+QX-1$. Now
$h(1)=Q-P=-3b^2<0$, and
\[
  h(P+1)=(P+1)(P+1+Q)-1>0,
\]
because $P+1+Q=3b^2+6b+7=3(b+1)^2+4>0$. Hence $1<\eta_b<P+1$.
\end{proof}

\begin{remark}\label{rem:etaP}
For $b\ge2$ one has the sharper $\eta_b<P$, since then $h(P)=QP-1>0$; but for
$b\le-1$ we have $Q=3(b+1)\le0$, so $h(P)\le-1<0$ and in fact $\eta_b>P$
\textup(e.g.\ $\eta_{-2}=9.3329\ldots>9=P$, and $\eta_{-100}=29703.0099\ldots>29703=P$\textup).
This is why Lemma~\ref{lem:etasize} is stated with $P+1$.
\end{remark}

\section{The integral basis and the index \texorpdfstring{$[\OK:\intZ[\theta]]$}{}}
\label{sec:basis}

By Proposition~\ref{prop:setup} and the translation invariance of the
discriminant, determining $\OK$ amounts to computing the single index
$a\coloneqq[\OK:\intZ[\theta]]$, which we do prime by prime; an integral basis is
then exhibited in Corollary~\ref{cor:idxmain}.

\begin{remark}\label{rem:voronoi}
Voronoi's classical theorem (\cite[\S17]{MR160744}; see also \cite{MR2037576}) is
not used below: the index is computed prime by prime in
Theorems~\ref{thm:idxprime} and~\ref{thm:idx23} from Newton polygons and
Dedekind's criterion, and the integral basis of Corollary~\ref{cor:idxmain} is
verified directly by Lemma~\ref{lem:omega}.
\end{remark}

\begin{remark}\label{rem:reduced}
  A cubic algebraic integer $\delta$ with minimal polynomial $x^3-mx-n$,
  $m,n\in\intZ$, is called \emph{reduced} if no prime $p$ satisfies $p^2\mid m$
  and $p^3\mid n$.
  In our family $f_b(x)$ is reduced if and only if $3\nmid b$ and $b$ is squarefree,
  since $p^2\mid3b$ and $p^3\mid b^3$ hold simultaneously exactly when $p^2\mid b$
  for $p\neq3$, or when $p=3$ and $3\mid b$; this is the squarefreeness hypothesis
  of Corollary~\ref{cor:idxmain}. Being reduced is not a formality: for $b=25$ one
  has $a=125$, and $\theta/5$ is already an algebraic integer \textup(indeed
  $\{1,\theta/5,\theta^2/25\}$ is an integral basis of $\OK$\textup), so \emph{no}
  integral basis of $\OK$ contains $\theta$.
\end{remark}

We now compute $v_p(a)$ prime by prime from the relation
\[
  v_p(\poldisc{f_b(x)})=2v_p(a)+v_p(\disc{K_b}),
\]
between the two discriminants.

\begin{theorem}\label{thm:idxprime}
For a prime $p\ge5$,
\[
  v_p(a)=\begin{cases}
   \big\lfloor \tfrac32 v_p(b)\big\rfloor, & p\mid b,\\[2pt]
   \big\lfloor \tfrac12 v_p(b^3-4)\big\rfloor, & p\nmid b.
  \end{cases}
\]
\end{theorem}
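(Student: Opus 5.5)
The plan is to use the relation $v_p(\poldisc{f_b(x)})=2v_p(a)+v_p(\disc{K_b})$ recorded just before the statement. For each prime $p\ge5$ we compute $v_p(\poldisc{f_b(x)})$ directly. We then determine $v_p(\disc{K_b})$ from the factorization of $f_b(x)$ over $\ratQp$. Since $p\ge5$ does not divide $\deg f_b=3$, all ramification at $p$ is tame, so $v_p(\disc{K_b})=\sum_{\mathfrak P\mid p}(e_{\mathfrak P}-1)f_{\mathfrak P}$. In particular $v_p(\disc{K_b})\le 2$, and it is $0$ or $1$ unless $p$ is totally ramified. In each case we will show $v_p(\disc{K_b})\in\{0,1\}$ and that it has the parity of $v_p(\poldisc{f_b(x)})$. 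The formula $v_p(a)=\lfloor\tfrac12 v_p(\poldisc{f_b(x)})\rfloor$ then follows, and it specializes to the two cases stated.

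\emph{Case $p\nmid b$.} Here $v_p(\poldisc{f_b(x)})=v_p(-27b^3(b^3-4))=v_p(b^3-4)$. If this is $0$, the index has $v_p(a)=0$ and there is nothing to prove.

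Suppose instead $p\mid b^3-4$. We look for a double root $x_0$ of $f_b$ modulo $p$. The condition $f_b'(x_0)=3(x_0^2-b)\equiv0$ together with $f_b(x_0)\equiv0$ forces $-2bx_0-b^3\equiv0$, that is $x_0\equiv-b^2/2$. This is consistent, because $x_0^2=b\cdot b^3/4\equiv b$. Hence
\[
f_b(x)\equiv(x-x_0)^2(x+2x_0)\pmod p,
\]
and the two residue roots $x_0$ and $-2x_0$ are distinct since $p\ne3$ and $x_0\not\equiv0$.

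By Hensel's lemma the simple factor lifts, so $f_b=(x-r)g(x)$ over $\ratQp$ with $g$ quadratic. Thus $p$ is either unramified or of the form $\mathfrak p_1\mathfrak p_2^2$, and $v_p(\disc{K_b})\in\{0,1\}$. Parity then pins it down, giving $v_p(a)=\lfloor\tfrac12v_p(b^3-4)\rfloor$.

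\emph{Case $p\mid b$, with $v=v_p(b)\ge1$.} Now $v_p(\poldisc{f_b(x)})=3v$, because $p\nmid 27(b^3-4)$.

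The Newton polygon of $f_b$ is built from the points $(3,0)$, $(1,v)$ and $(0,3v)$, coming from the monomials $x^3$, $-3bx$ and $-b^3$ respectively. The point $(1,v)$ lies strictly below the chord joining $(0,3v)$ and $(3,0)$, so the polygon has two sides:
\begin{itemize}
\item a side of length $1$ and slope $-2v$, giving one root in $\ratQp$ of valuation $2v$;
\item a side of length $2$ and slope $-v/2$, giving a quadratic factor $g$ over $\ratQp$ whose roots have valuation $v/2$.
\end{itemize}

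If $v$ is odd, the extension of $\ratQp$ generated by a root of $g$ is ramified, so $v_p(\disc{K_b})=1$.

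If $v$ is even, we substitute $x=p^{v/2}y$. This makes $g$ equivalent to a quadratic whose reduction is $y^2-3b/p^{v}$, up to a unit. That residual polynomial is separable because $p\ne2,3$ and $3b/p^v$ is a $p$-adic unit. Hence the quadratic extension is unramified and $v_p(\disc{K_b})=0$.

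In both subcases $v_p(\disc{K_b})\equiv v\equiv 3v\pmod 2$ and $v_p(\disc{K_b})\le1$. Therefore $v_p(a)=\tfrac12(3v-v_p(\disc{K_b}))=\lfloor\tfrac32v\rfloor$.

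The step needing most care is the second case: justifying that the quadratic factor attached to the slope $-v/2$ has the claimed ramification. I would do this by rescaling as above and applying Hensel's lemma or Dedekind's criterion to the rescaled polynomial; alternatively, the theorem of the residual polynomial in Ore's theory handles it directly. One must also check that the residual-degree bookkeeping is right, which it is, since the lengths of the two sides sum to $3$.
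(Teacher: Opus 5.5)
Your proof is correct. Its framework matches the paper's: the relation $v_p(\poldisc{f_b(x)})=2v_p(a)+v_p(\disc{K_b})$, the Newton polygon with vertices $(0,3v),(1,v),(3,0)$ when $p\mid b$, and the double root $x_0\equiv-b^2/2$ when $p\nmid b$. In the case $p\nmid b$, however, your key step is different.

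The paper analyses the local order at the double root directly. It asserts that iterating Dedekind's criterion, or recentering the Newton polygon at a lift of $x_0$, removes a factor $p^2$ from the discriminant at each step until $v<2$; this is only sketched.

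You argue differently. Once Hensel's lemma splits off a $\ratQp$-rational root, $v_p(\disc{K_b})\in\{0,1\}$, since $p$ is odd and the rest is a quadratic \'etale algebra. The relation itself forces $v_p(\disc{K_b})\equiv v_p(\poldisc{f_b(x)})\pmod 2$. Together these pin down $v_p(\disc{K_b})$ and $v_p(a)=\lfloor\tfrac12 v_p(\poldisc{f_b(x)})\rfloor$ with no iteration. The ramification dichotomy ($e=2$ if and only if the exponent is odd) then comes out as a consequence rather than an input.

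The same observation applies verbatim when $p\mid b$. There you only need the rational root supplied by the length-one side of slope $-2v$. Your rescaling analysis of the slope $-v/2$ factor, which you single out as the delicate step, is correct but unnecessary.

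The trade-off is this. The paper's route, carried out in full, would build the $p$-maximal order step by step. Yours computes the index without ever constructing that order, and gives a short, complete argument. Your argument depends on $p$ being odd and on the existence of a linear factor. It therefore would not settle $p=2$, where a ramified quadratic can have $v_2(d)=2$ so parity no longer decides and Lemma~\ref{lem:q2} is needed. Nor would it settle $p=3\nmid b$, where $3$ is totally and wildly ramified. This is consistent with the paper treating those primes separately in Theorem~\ref{thm:idx23}.
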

\begin{proof}
As $\gcd(b,b^3-4)\mid4$, for $p\ge5$ at most one of $p\mid b$, $p\mid b^3-4$ holds.

\emph{Case $p\mid b$, $v_p(b)=w$.} The $p$-adic Newton polygon of
$f_b(x)=x^3-3bx-b^3$ has vertices $(0,3w)$, $(1,w)$, $(3,0)$ (since $v_p(3b)=w$,
$v_p(b^3)=3w$): a segment of slope $-2w$ and horizontal length $1$ (one root of
valuation $2w$) and a segment of slope $-w/2$ and length $2$ (two roots of
valuation $w/2$). The first yields an unramified linear factor over the field
$\ratQp$ of $p$-adic numbers; the
second yields an unramified quadratic factor if $w$ is even and one with
ramification index $e=2$, tame as $p\ge5$, if $w$ is odd. Hence $v_p(\disc{K_b})=w\bmod2$, and from
$v_p(\poldisc{f_b(x)})=3w$ we get $2v_p(a)=3w-(w\bmod2)$, i.e.\ $v_p(a)=\lfloor3w/2\rfloor$.

\emph{Case $p\nmid b$, $v\coloneqq v_p(b^3-4)>0$.} Then $f_b(x)\bmod p$ has a double root.
From $f_b'(x)=3(x^2-b)$, a repeated root has $x^2\equiv b$ and
$0\equiv f_b(x)\equiv-2bx-b^3$, giving the double root $x_0\equiv-b^2/2$ and a simple
root $x_1\equiv b^2$ (distinct, as $x_1-x_0\equiv\tfrac32 b^2\not\equiv0$). The
simple root splits off an unramified linear factor. At the double root,
$\intZ_p[\theta]$, with $\intZ_p$ the ring of $p$-adic integers, sits at index
$p^{\lfloor v/2\rfloor}$ below the maximal local
order, with the place unramified if $v$ is even and ramified ($e=2$, tame) if $v$
is odd: indeed iterating Dedekind's criterion (equivalently, the Newton polygon of
$f_b(x)$ recentered at a lift of $x_0$) removes a factor $p^2$ from the discriminant
at each step until $v<2$. Hence $v_p(\disc{K_b})=v\bmod2$ and
$v_p(a)=\lfloor v/2\rfloor$.
\end{proof}

The primes $2$ and $3$ require separate treatment; the following standard lemma
handles the wild prime $2$.

\begin{lemma}\label{lem:q2}
  
Let $\delta\in\ratQ_2^\times$ be a non-square, $L=\ratQ_2(\sqrt\delta)$, and
$u\coloneqq\delta/2^{v_2(\delta)}\in\intZ_2^\times$, and let $d_{L/\ratQ_2}$,
abbreviated $d_L$, be the discriminant of $L/\ratQ_2$. Then
$v_2(d_{L/\ratQ_2})=3$ if $v_2(\delta)$ is odd. If $v_2(\delta)$ is even, then
$v_2(d_{L/\ratQ_2})=2$ when
$u\equiv3,7\pmod8$, and $L/\ratQ_2$ is unramified \textup($v_2(d_L)=0$\textup) when
$u\equiv5\pmod8$ \textup($u\equiv1\pmod8$ forces $\delta$ a square\textup).
\end{lemma}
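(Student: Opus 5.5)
We reduce everything to the discriminant of a convenient generator of the ring of integers $\intZ_2$-module, and we first normalize $\delta$ modulo squares. Since $L$ depends only on the class of $\delta$ in $\ratQ_2^\times/(\ratQ_2^\times)^2$, we may replace $\delta$ by $\delta\cdot 2^{-2\lfloor v_2(\delta)/2\rfloor}$. This leaves $u$ unchanged and preserves the parity of $v_2(\delta)$. So we assume $\delta=u$ or $\delta=2u$ with $u\in\intZ_2^\times$. We then use the structure of $\intZ_2^\times/(\intZ_2^\times)^2$, represented by $1,3,5,7$. The fact that a unit $\equiv1\pmod8$ is a square (Hensel's lemma applied to $x^2-u$ in its strong form $|f(a)|<|f'(a)|^2$) gives the parenthetical remark. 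It also shows that $u$ matters only modulo $8$.

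\emph{Case $v_2(\delta)$ odd, $\delta=2u$.} The element $\sqrt{\delta}$ has minimal polynomial $x^2-2u$, which is Eisenstein at $2$. Hence $L/\ratQ_2$ is totally ramified, $\intZ_2[\sqrt{2u}]$ is the full ring of integers, and $d_L=\operatorname{disc}(x^2-2u)=8u$. Therefore $v_2(d_L)=3$.

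\emph{Case $v_2(\delta)$ even, $\delta=u$, $u\not\equiv1\pmod 8$.} If $u\equiv3\pmod4$ (that is, $u\equiv3,7\pmod8$), consider $\alpha=1+\sqrt u$, with minimal polynomial $(x-1)^2-u=x^2-2x+(1-u)$. Here $v_2(1-u)=1$, so this polynomial is Eisenstein. Hence $\intZ_2[\alpha]=\intZ_2[\sqrt u]$ is maximal, and $d_L=\operatorname{disc}(x^2-u)=4u$, giving $v_2(d_L)=2$.

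If $u\equiv5\pmod8$, take $\omega=(1+\sqrt u)/2$, with minimal polynomial $x^2-x+(1-u)/4$. Its constant term is integral and, since $(1-u)/4$ is odd, the polynomial reduces modulo $2$ to $x^2+x+1$, which is irreducible over $\mathbb{F}_2$. Consequently $\intZ_2[\omega]$ is maximal with discriminant $u$, a unit. Thus $L$ is the unramified quadratic extension and $v_2(d_L)=0$.

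The main care point is the case $u\equiv5\pmod8$. Here one must note that $\intZ_2[\sqrt u]$ is not maximal, having index $2$, so the discriminant $4u$ of $x^2-u$ overstates $d_L$. This is why we switch to $\omega$. Everything else is a direct Eisenstein or Dedekind check.
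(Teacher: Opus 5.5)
Your proof is correct, and it is essentially the argument the paper compresses into its one-line appeal to the classification of quadratic extensions of $\ratQ_2$ (cf.\ Serre, Ch.~II, \S3.3). You reduce $\delta$ to a square-class representative $u$ or $2u$ via Hensel's lemma, then read off $d_L$ from an Eisenstein generator ($\sqrt{2u}$, or $1+\sqrt u$ when $u\equiv3\pmod4$), and from the unit discriminant $u$ of $(1+\sqrt u)/2$ when $u\equiv5\pmod8$, thereby supplying the details the paper leaves to the reader.
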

\begin{proof}
This is an easy consequence of the classification of quadratic extensions of
$\ratQ_2$ (see e.g.\ \cite[Ch.~II, \S3.3]{MR49:8956}).
\end{proof}

\begin{theorem}\label{thm:idx23}
For $p=3$: $\ v_3(a)=0$ if $3\nmid b$, and $v_3(a)=\big\lfloor\tfrac32(v_3(b)+1)\big\rfloor$
if $3\mid b$. For $p=2$: $\ v_2(a)=0$ if $b$ is odd, and for $b$ even with
$w=v_2(b)$ and $b'=b/2^w$,
\[
  v_2(a)=\begin{cases}(3w-1)/2,& w\ \text{odd},\\[2pt]
  3w/2+[\,b'\equiv3\!\!\pmod4\,],& w\ \text{even}.\end{cases}
\]
\end{theorem}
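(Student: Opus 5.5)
The plan is to compute $v_p(a)$ from the relation $v_p(\poldisc{f_b(x)})=2v_p(a)+v_p(\disc{K_b})$, as in Theorem~\ref{thm:idxprime}. Here $v_p(\poldisc{f_b(x)})$ is read off from $\poldisc{f_b(x)}=-27b^3(b^3-4)$. The local discriminant $v_p(\disc{K_b})$ is obtained from the $p$-adic factorization of $f_b(x)$, via Newton polygons, residual polynomials and, at $p=2$, Lemma~\ref{lem:q2}.

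\emph{The prime $3$, case $3\nmid b$.} Modulo $3$ we have $f_b(x)\equiv(x-b)^3$. I would substitute $x=y+b$, obtaining
\[
y^3+3by^2+3(b^2-b)y-3b^2 .
\]
Its constant term has $v_3=1$, so this polynomial is Eisenstein. Hence $\intZ_3[\theta]$ is $3$-maximal and $v_3(a)=0$; this also shows that $3$ is totally ramified.

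\emph{The prime $3$, case $3\mid b$.} Put $w=v_3(b)$, so that $v_3(\poldisc{f_b(x)})=3+3w$, because $3\nmid b^3-4$. The Newton polygon has points $(0,3w)$, $(1,w+1)$, $(3,0)$.
\begin{itemize}
\item For $w\ge2$ it has a segment of slope $2w-1$ and length $1$, which gives an unramified linear factor. It also has a segment of slope $(w+1)/2$ and length $2$.
\item If $w$ is even, the quadratic part is tamely ramified with $e=2$, so $v_3(\disc{K_b})=1$. This gives $v_3(a)=(3w+2)/2=\lfloor\tfrac32(w+1)\rfloor$, and it also yields the factorization $\mathfrak p_1\mathfrak p_2^2$.
\item If $w$ is odd, the residual polynomial of the length-$2$ segment is $y^2-c$ with $c$ a $3$-adic unit, which is separable mod $3$. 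So that part is unramified, $v_3(\disc{K_b})=0$, and $v_3(a)=3(w+1)/2$.
\item The case $w=1$ needs separate care, since the polygon is then a single segment of slope $1$. Writing $\theta=3y$ and $b=3b'$, the residual polynomial is $y^3-b'y-b'^3\equiv y^3-y\mp1\pmod 3$. This is an irreducible Artin--Schreier polynomial over $\mathbb F_3$, so $3$ is inert, $v_3(\disc{K_b})=0$, and $v_3(a)=3$, as claimed.
\end{itemize}

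\emph{The prime $2$.} If $b$ is odd then $v_2(\poldisc{f_b(x)})=0$, so $v_2(a)=0$. If $b$ is even with $w=v_2(b)$, then $b^3-4\equiv4\pmod 8$, so $v_2(\poldisc{f_b(x)})=3w+2$. The Newton polygon, with points $(0,3w)$, $(1,w)$, $(3,0)$, splits $f_b(x)=(x-r)(x^2+rx+s)$ over $\ratQ_2$, where $v_2(r)=2w$. Comparing coefficients gives $r\approx-b^2/3$ and $rs=b^3$, so $s=b^3/r$. The quadratic factor generates $L=\ratQ_2(\sqrt\delta)$ with $\delta=r^2-4s$. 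Here $v_2(r^2)=4w$ while $v_2(4s)=w+2$, so $\delta=12b\,(1+\text{higher-order terms})$.
\begin{itemize}
\item If $w$ is odd, then $v_2(\delta)$ is odd. Lemma~\ref{lem:q2} gives $v_2(\disc{K_b})=3$, hence $v_2(a)=(3w-1)/2$.
\item If $w$ is even, the unit part of $\delta$ is $3b'\cdot(\text{correction})$. If the correction is $\equiv1\pmod4$, then $b'\equiv1\pmod4$ gives unit part $\equiv3\pmod4$, so $v_2(\disc{K_b})=2$ and $v_2(a)=3w/2$.
\item If instead $b'\equiv3\pmod4$, the unit part is $\equiv1\pmod4$. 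Then $L$ is either unramified or split, and in either case $v_2(\disc{K_b})=0$, so $v_2(a)=3w/2+1$.
\end{itemize}

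The main obstacle is the precision of that correction factor. I must show that $\delta/(12b)\equiv1\pmod4$, uniformly also in the small cases $w=2$ and $b'=\pm1$. I would attempt this first by refining $r=-b^2/3\cdot(1+t)$ using $f_b(r)=0$, which gives $v_2(t)\ge3w-w-\ldots$. I would then expand $s=b^3/r=-3b(1+t)^{-1}$ and bound $r^2/(12b)$, which has valuation $3w-2\ge4$ for $w\ge2$.
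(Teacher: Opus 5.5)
Your proposal uses the same framework as the paper: $v_p(\poldisc{f_b(x)})=2v_p(a)+v_p(\disc{K_b})$, Newton polygons, and Lemma~\ref{lem:q2} at $2$. It reaches the correct formulas. However, one intermediate claim is false and one computation is left unfinished.

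\emph{Comparison with the paper.} For $3\nmid b$ you observe that $f_b(y+b)=y^3+3by^2+3(b^2-b)y-3b^2$ is Eisenstein; the paper uses Dedekind's criterion instead. Your version is slightly more economical, since it gives $3$-maximality and total ramification at once. The paper infers total ramification afterwards from $v_3(\disc{K_b})\ge3$.

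For $3\mid b$ you use the polygon of $f_b(x)$ directly. The paper first passes to $y=\theta/3$, a root of $Y^3-BY-B^3$ with $B=b/3$ and $[\intZ[y]:\intZ[\theta]]=27$. The two polygons correspond under $x=3y$. The paper's normalization makes $w=1$ trivial, because then $\poldisc{Y^3-BY-B^3}=-B^3(27B^3-4)$ is a $3$-adic unit.

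\emph{The error at $w=1$.} Your separate treatment of $w=1$ contains a sign error. Since $b'^3\equiv b'\pmod 3$, the reduction is
\[
y^3-b'y-b'^3\equiv y^3-b'(y+1).
\]
This is the Artin--Schreier polynomial $y^3-y-1$ only when $b'\equiv1\pmod3$. For $b'\equiv-1\pmod 3$ it is $y^3+y+1=(y-1)(y^2+y+2)$ over $\mathbb F_3$, so $3$ is not inert in general. For instance, $b=-3$ gives $K_{-3}=\ratQ(y)$ with $y^3+y+1=0$, of discriminant $-31$, where $3=\mathfrak p_1\mathfrak p_2$ with residue degrees $1$ and $2$.

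The conclusion you need still holds. The polynomial $y^3-b'(y+1)$ has derivative $-b'\not\equiv0\pmod 3$, so it is separable in both cases. Hence $v_3(\disc{K_b})=0$ and $v_3(a)=3$. Replace the irreducibility claim by this separability argument, or by the unit discriminant.

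\emph{The step at $p=2$.} The step you flag as the main obstacle closes in one line, with no special treatment of $w=2$ or $b'=\pm1$. With $r=-\tfrac13b^2(1+t)$ one has $-3br=b^3(1+t)$, so $f_b(r)=0$ becomes $r^3+b^3t=0$. Thus $t=-r^3/b^3$ exactly, and $v_2(t)=3w$. Since $s=b^3/r=-3b/(1+t)$,
\[
  \frac{\delta}{12b}=\frac{1}{1+t}+\frac{r^2}{12b}\equiv1\pmod{2^{3w-2}},
\]
and $3w-2\ge4$ for $w\ge2$. This is the paper's congruence $\theta_1/2^{2w}\equiv-b'^2/3\pmod{2^{3w}}$ written in your notation, with $r=\theta_1$.

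You are right that the unit part is needed only modulo $4$, not modulo $8$ as the paper computes it. The classes $u\equiv3,7\pmod 8$ both give $v_2(\disc{K_b})=2$. The classes $u\equiv1,5\pmod8$ give a split or unramified quadratic, so $v_2(\disc{K_b})=0$.
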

\begin{proof}
Throughout $v_p(\poldisc{f_b(x)})=2v_p(a)+v_p(\disc{K_b})$, so it suffices to find
$v_p(\disc{K_b})$.

\emph{$p=3$, $3\nmid b$.} Then $f_b(x)\equiv x^3-b^3\equiv(x-b)^3\pmod3$. In
Dedekind's criterion take $\bar g(x)=x-b$, $\bar h(x)=(x-b)^2$; then
$M(x)=(f_b(x)-(x-b)^3)/3=b\,x\,(x-(1+b))$ and $\bar M(b)=-b^2\not\equiv0\pmod3$,
so $\gcd\bigl(\bar g(x),\bar h(x),\bar M(x)\bigr)=1$ and $3\nmid a$. Then
$v_3(\disc{K_b})=v_3(\poldisc{f_b(x)})=3+v_3(b^3-4)\ge3$. Were $3$ not totally
ramified, $f_b(x)$ would factor over $\ratQ_3$ as a linear times a quadratic factor,
the latter unramified or tamely ramified as $3\nmid2$, giving
$v_3(\disc{K_b})\le1$. So for $3\nmid b$ the prime $3$ is totally ramified in
$K_b$, and wildly, $e=p=3$.

\emph{$p=3$, $3\mid b$, $w=v_3(b)$.} The element $y=\theta/3$ is an algebraic
integer: from $\theta^3=3b\theta+b^3$ one gets $y^3=By+B^3$ with $B=b/3$, so $y$
is a root of $h_B(Y)=Y^3-BY-B^3$ and $\ratQ(y)=K_b$. Since
$\intZ[\theta]=\intZ[3y]\subseteq\intZ[y]$ with $[\intZ[y]:\intZ[\theta]]=27$ (a Vandermonde
computation), $v_3(a)=v_3([\OK:\intZ[y]])+3$. Now $\poldisc{h_B(Y)}=-B^3(27B^3-4)$
has
$v_3=3w'$ with $w'=v_3(B)=w-1$. If $w'=0$ then $\intZ[y]$ is $3$-maximal and
$v_3([\OK:\intZ[y]])=0$. If $w'\ge1$, the Newton polygon of $h_B(Y)$ at $3$ has
vertices $(0,3w'),(1,w'),(3,0)$, giving a $\ratQ_3$-rational linear factor (root
$y_1$, $v_3=2w'$) and a quadratic factor $\kappa(Y)=Y^2+y_1Y+c$, $c=B^3/y_1$,
$v_3(c)=w'$; so $v_3(\poldisc{\kappa(Y)})=v_3(y_1^2-4c)=\min(4w',w')=w'$. As
$3\nmid2$, $\kappa(Y)$ defines a tamely ramified ($e=2$) quadratic when $w'$ is odd and an
unramified/split one when $w'$ is even, so $v_3(\disc{K_b})=[w'\ \text{odd}]$ and
$v_3([\OK:\intZ[y]])=(3w'-[w'\ \text{odd}])/2=\lfloor3w'/2\rfloor$. Hence
$v_3(a)=\lfloor3(w-1)/2\rfloor+3=\lfloor3(w+1)/2\rfloor$ (also valid when $w=1$).

\emph{$p=2$, $b$ odd.} Then $b^3-4$ is odd and $v_2(\poldisc{f_b(x)})=0$, so $2\nmid a$.

\emph{$p=2$, $b$ even, $w=v_2(b)\ge1$, $b'=b/2^w$ odd.} The Newton polygon of
$f_b(x)$ at $2$ has vertices $(0,3w),(1,w),(3,0)$: a $\ratQ_2$-rational linear factor
with root $\theta_1$, $v_2(\theta_1)=2w$, and a quadratic factor
$\kappa(x)=x^2+\theta_1x+c$ with $c=b^3/\theta_1$, $v_2(c)=w$ (from
$\theta_2+\theta_3=-\theta_1$, $\theta_2\theta_3=c$). Then
$v_2(\poldisc{\kappa(x)})=v_2(\theta_1^2-4c)=\min(4w,w+2)=w+2$. Writing $\theta_1=2^{2w}s$,
$f_b(\theta_1)=0$ gives $s\equiv-b'^2/3\pmod{2^{3w}}$.
If $w$ is odd, $v_2(\poldisc{\kappa(x)})=w+2$ is odd, so by Lemma~\ref{lem:q2}
$v_2(\disc{K_b})=3$ and $v_2(a)=(3w+2-3)/2=(3w-1)/2$.
If $w$ is even, the unit $u=\poldisc{\kappa(x)}/2^{w+2}\equiv-c/2^w=-b'^3/s\equiv3b'\pmod8$
(the $\theta_1^2$ term contributes $\equiv0\bmod8$ after dividing by $2^{w+2}$,
as $3w-2\ge4$). By Lemma~\ref{lem:q2}, $b'\equiv1\pmod4$ (so $3b'\equiv3,7\pmod8$)
gives $v_2(\disc{K_b})=2$, $v_2(a)=3w/2$; and $b'\equiv3\pmod4$ (so $3b'\equiv1,5\pmod8$)
gives $v_2(\disc{K_b})=0$, $v_2(a)=3w/2+1$. Both equal $3w/2+[b'\equiv3\!\!\pmod4]$.
\end{proof}

\begin{remark}
In the notation of Llorente-Nart~\cite{MR687621} ($f=X^3-AX+B$,
$\Delta=4A^3-27B^2=i(\theta)^2D$), our family is $A=3b$, $B=-b^3$. Their reduction
``$\theta\mapsto\theta/p$ when $v_p(A)\ge2,v_p(B)\ge3$'' is precisely the
substitution $y=\theta/3$ used above, and their Theorem~2 at $p=2$ reduces, with
$v_2(\Delta)=3w+2$, to exactly the dichotomy of Theorem~\ref{thm:idx23}.
\end{remark}

The integral basis is now obtained from the following computation, which requires
no hypothesis on $b$.

\begin{lemma}\label{lem:omega}
For every $b\neq0,1$ the element
\[
  \omega\coloneqq\frac{\theta^2-3b}{b}\in K_b
\]
is an algebraic integer, and its minimal polynomial over $\ratQ$ is
\[
  X^3+3X^2-b^3\in\intZ[X].
\]
\end{lemma}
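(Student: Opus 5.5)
The plan is to derive the polynomial relation for $\omega$ directly from the defining equation of $\theta$. Integrality is then immediate because the resulting polynomial is monic with integer coefficients, and minimality follows from a degree argument.

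First I rewrite $f_b(\theta)=0$ as $\theta(\theta^2-3b)=b^3$. Squaring gives a relation for $t\coloneqq\theta^2$ alone, namely $t\,(t-3b)^2=b^6$. By definition of $\omega$ we have $t-3b=b\omega$ and $t=b(\omega+3)$. Substituting, the relation becomes
\[
  b(\omega+3)\cdot b^2\omega^2=b^6 ,
\]
and dividing by $b^3\neq0$ yields $\omega^3+3\omega^2-b^3=0$. Since $X^3+3X^2-b^3\in\intZ[X]$ is monic, $\omega$ is an algebraic integer.

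To see that this cubic is the minimal polynomial, it suffices to show $\omega\notin\ratQ$; the degree of $\omega$ then divides $3=[K_b:\ratQ]$ and exceeds $1$, so it equals $3$. If $\omega$ were rational, then $\theta^2=b(\omega+3)$ would be rational. In that case $\theta$ would satisfy a polynomial of degree at most $2$ over $\ratQ$. This contradicts Proposition~\ref{prop:setup}(i), which says $\theta$ has degree $3$. Hence $X^3+3X^2-b^3$ is the minimal polynomial of $\omega$, and in particular $\ratQ(\omega)=K_b$.

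There is no real obstacle here. The only points needing care are that squaring does not lose anything, since we only need $\omega$ to be a root, not an equivalence, and that $b\neq0$ for the division. No hypothesis on $b$ beyond the standing $b\neq0,1$ is used.
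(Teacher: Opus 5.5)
Your proof is correct and follows essentially the same route as the paper: show that $\omega$ is a root of the monic cubic $X^3+3X^2-b^3$, then get degree $3$ from $\theta^2\notin\ratQ$ together with the primality of $[K_b:\ratQ]=3$. The one addition is that you make the paper's ``check directly'' step explicit, via the identity $\theta^2(\theta^2-3b)^2=b^6$ obtained by squaring $\theta(\theta^2-3b)=b^3$.
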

\begin{proof}
One can check directly that $\omega$ is a root of $X^3+3X^2-b^3$.
One can see, from $\theta^2=b(\omega+3)$, that
$\ratQ(\theta^2)\subseteq\ratQ(\omega)\subseteq K_b$, and $\theta^2\notin\ratQ$
because $\theta$ has degree $3$; as $[K_b:\ratQ]=3$ is prime this forces
$\ratQ(\omega)=K_b$, so $\omega$ has degree $3$ and $X^3+3X^2-b^3$ is its minimal
polynomial.
\end{proof}

\begin{remark}\label{rem:sister}
Substituting $X=Z-1$ in $X^3+3X^2-b^3$ gives $Z^3-3Z-(b^3-2)$. Hence
$\zeta\coloneqq\omega+1=(\theta^2-2b)/b$ satisfies
\[
  \zeta^3-3\zeta=b^3-2,
\]
so that $K_b=\ratQ(\zeta)$ is the member $n=b^3-2$ of the family $x^3-3x-n$,
whereas Ishida's family~\eqref{eq:sister} is its member $n=b^3$. The discriminant
of $x^3-3x-n$ is $-27(n-2)(n+2)$, specializing to
$-27b^3(b^3-4)$ for $n=b^3-2$ and to $-27(b^3-2)(b^3+2)$ for $n=b^3$. In
particular $x^3-3x-(b^3-2)$ is irreducible for every $b\neq0,1$.
\end{remark}

\begin{corollary}\label{cor:idxmain}
If $3\nmid b$ and $b^3-4$ is squarefree \textup(which forces $b$ odd\textup), then
\[
  a=[\OK:\intZ[\theta]]=\prod_{p\mid b}p^{\lfloor 3v_p(b)/2\rfloor}.
\]
If moreover $b$ is squarefree, then $a=|b|$ and
\[
  \Bigl\{\,1,\ \theta,\ \frac{\theta^2-3b}{b}\,\Bigr\}
\]
is an integral basis of $\OK$, with $\disc{K_b}=-27b(b^3-4)$.
\end{corollary}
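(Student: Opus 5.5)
The plan is to assemble the prime-by-prime computations of Theorems~\ref{thm:idxprime} and~\ref{thm:idx23} under the stated hypotheses, and then to identify the integral basis using Lemma~\ref{lem:omega} together with a discriminant comparison.

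First, the parenthetical claim. If $b$ were even, then $b^3\equiv0\pmod 8$, so $b^3-4\equiv4\pmod8$ and $4\mid b^3-4$. This is impossible when $b^3-4$ is squarefree, so $b$ is odd. Now fix a prime $p$ and compute $v_p(a)$.

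For $p=2$: $b$ is odd, so $v_2(a)=0$ by Theorem~\ref{thm:idx23}. For $p=3$: $3\nmid b$, so $v_3(a)=0$ by the same theorem. For $p\ge5$ with $p\nmid b$: Theorem~\ref{thm:idxprime} gives $v_p(a)=\lfloor v_p(b^3-4)/2\rfloor$, and this is $0$ because $v_p(b^3-4)\le1$. For $p\ge5$ with $p\mid b$: Theorem~\ref{thm:idxprime} gives $v_p(a)=\lfloor 3v_p(b)/2\rfloor$. The primes $2,3$ do not divide $b$, so the product formula for $a$ follows.

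If moreover $b$ is squarefree, every $p\mid b$ has $v_p(b)=1$, so $\lfloor 3/2\rfloor=1$ and $a=|b|$. Then
\[
\disc{K_b}=\poldisc{f_b(x)}/a^2=-27b^3(b^3-4)/b^2=-27b(b^3-4).
\]

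For the integral basis, Lemma~\ref{lem:omega} shows $\omega=(\theta^2-3b)/b\in\OK$, so $M=\intZ+\intZ\theta+\intZ\omega$ is a sublattice of $\OK$ containing $\intZ[\theta]$. The change-of-basis matrix from $\{1,\theta,\theta^2\}$ to $\{1,\theta,\omega\}$ is triangular with diagonal $(1,1,1/b)$. Hence $[M:\intZ[\theta]]=|b|=a$, and $M=\OK$ since both contain $\intZ[\theta]$ with the same index. In particular this also shows $M$ is a ring, so no separate closure check is needed.

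The only mildly delicate point is making sure the hypotheses rule out the $2$-adic and $3$-adic contributions. That is handled by the parity argument and by $3\nmid b$, after which the rest is bookkeeping.
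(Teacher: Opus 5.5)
Your proof is correct and follows the paper's argument: the same prime-by-prime assembly from Theorems~\ref{thm:idxprime} and~\ref{thm:idx23}, followed by Lemma~\ref{lem:omega} and the triangular change of basis with determinant $1/b$. The only difference is cosmetic: you conclude $M=\OK$ from the index chain $\intZ[\theta]\subseteq M\subseteq\OK$, whereas the paper compares $\disc{\Lambda}=\poldisc{f_b(x)}/b^2$ with $\disc{K_b}=\poldisc{f_b(x)}/a^2$, and the two are equivalent.
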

\begin{proof}
Squarefreeness of $b^3-4$ forces $b$ odd (else $4\mid b^3-4$) and kills the
$p\nmid b$ terms of Theorem~\ref{thm:idxprime} ($\lfloor v_p(b^3-4)/2\rfloor=0$);
$3\nmid b$ gives $v_3(a)=0$ and $b$ odd gives $v_2(a)=0$ by
Theorem~\ref{thm:idx23}. Theorem~\ref{thm:idxprime} leaves
$a=\prod_{p\mid b}p^{\lfloor3v_p(b)/2\rfloor}$, which equals $|b|$ when $b$ is
squarefree.

Assume now that $b$ is squarefree, and let $\omega=(\theta^2-3b)/b$ be as in
Lemma~\ref{lem:omega}, so that $\Lambda\coloneqq\intZ+\intZ\theta+\intZ\omega$ is a
sublattice of $\OK$ of rank three. The transition matrix from
$(1,\theta,\theta^2)$ to $(1,\theta,\omega)$ is lower triangular with diagonal
entries $1,1,1/b$, so its determinant is $1/b$ and
\[
  \disc{\Lambda}=\frac{\poldisc{f_b(x)}}{b^2}=\frac{-27b^3(b^3-4)}{b^2}=-27b(b^3-4).
\]
On the other hand $a=|b|$ gives
$\disc{K_b}=\poldisc{f_b(x)}/a^2=-27b(b^3-4)$ as well. Since $\Lambda\subseteq\OK$ and
$\disc{\Lambda}=[\OK:\Lambda]^2\disc{K_b}$ with $\disc{K_b}\neq0$, we get
$[\OK:\Lambda]=1$, i.e.\ $\Lambda=\OK$ and $\{1,\theta,\omega\}$ is an integral basis.
\end{proof}

\begin{remark}
Since Lemma~\ref{lem:omega} is unconditional, so is the computation
$\disc{\Lambda}=\poldisc{f_b(x)}/b^2$; comparing it with $\disc{K_b}=\poldisc{f_b(x)}/a^2$
shows that
\[
  [\OK:\Lambda]=\frac{a}{|b|}\qquad\text{for every }b\neq0,1 .
\]
Thus $\Lambda$ is maximal precisely when $a=|b|$; for the non-reduced $b=25$ of
Remark~\ref{rem:reduced} one gets $[\OK:\Lambda]=5$.
\end{remark}

\begin{example}
For $b=15=3\cdot5$ one gets $a=135=3^3\cdot5$ and the integral basis
$\{1,\theta/3,(\theta^2-45)/45\}$; for the even $b=8=2^3$, $v_2(b)=3$ is odd, so
$2$ is wildly ramified, $a=2^4=16$, and $\{1,\theta/2,(\theta^2-16)/8\}$ is an
integral basis. Both are outside the regime of Corollary~\ref{cor:idxmain}.
\end{example}

We record a consequence used in the next section. In the regime of
Corollary~\ref{cor:idxmain} one has $a^2\mid|b|^3$, so
\begin{equation}\label{eq:dkbound}
  |\disc{K_b}|=\frac{27\,|b|^3\,|b^3-4|}{a^2}\ \ge\ 27\,|b^3-4| ,
\end{equation}
that is, $|\disc{K_b}|\ge27(b^3-4)$ for $b\ge2$, and $|\disc{K_b}|\ge27(c^3+4)$ for
$b=-c\le-1$.

\section{The fundamental unit}\label{sec:fund}

By~\eqref{eq:fundiff}, $\eta_b$ is the fundamental unit if and only if it is not a
proper power of a unit $>1$. In the regime of Corollary~\ref{cor:idxmain} the
discriminant of $K_b$ is large enough to exclude every prime exponent at once;
the two inequalities behind this are proved first.

\subsection{Two inequalities}

The first is an inequality of Artin, for which we include a proof.

\begin{lemma}[Artin's inequality; cf.~{\cite[Lemma~2]{MR335469}}]\label{lem:artin}
Let $K$ be a non-totally real cubic field and $e\in\intring{K}^\times$ with $e>1$ in the
real embedding. Then $|\poldisc{e}|\le4e^3+24$; in particular
$|\disc{K}|\le4e^3+24$.
\end{lemma}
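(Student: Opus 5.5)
Write $e=\sigma_1(e)>1$ for the real embedding and $\sigma_2(e)=\rho e^{i\phi}$, $\sigma_3(e)=\rho e^{-i\phi}$ with $\rho>0$, $0<\phi<\pi$ for the complex ones. Since $\Norm_{K/\ratQ}(e)=e\rho^2=\pm1$ and the norm is positive (as noted in \S\ref{sec:setup}), we get $\rho^2=1/e$, so $\rho=e^{-1/2}$. The discriminant of the minimal polynomial of $e$ is the square of the Vandermonde product:
\[
  \poldisc{e}=\bigl((\sigma_1-\sigma_2)(\sigma_1-\sigma_3)(\sigma_2-\sigma_3)\bigr)^2
  =|e-\rho e^{i\phi}|^4\cdot(2i\rho\sin\phi)^2 .
\]
Hence $|\poldisc{e}|=4\rho^2\sin^2\phi\,(e^2-2e\rho\cos\phi+\rho^2)^2$. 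Put $x=e^{3/2}=e/\rho$ (so $x>1$) and $c=\cos\phi$, $s^2=1-c^2$. Factoring out $\rho^6=e^{-3}$ gives
\[
  |\poldisc{e}|=4\rho^6(1-c^2)(x^2-2xc+1)^2=\frac{4}{x^2}(1-c^2)(x^2-2cx+1)^2 .
\]
I will bound this by $4e^3+24=4x^2+24$.

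The key step is therefore the one-variable inequality
\[
  (1-c^2)(x^2-2cx+1)^2\le x^4+6x^2 \qquad (|c|\le1,\ x>0).
\]
Expanding, $(x^2-2cx+1)^2=x^4-4cx^3+(2+4c^2)x^2-4cx+1$. Multiplying by $1-c^2$, the difference $x^4+6x^2-(\text{LHS})$ becomes a polynomial in $x$ whose coefficients are polynomials in $c$. I expect it to reorganize as $c^2x^4+4c(1-c^2)x^3+(\ldots)x^2+4c(1-c^2)x-(1-c^2)$. The natural route is to write it as a sum of a square plus a nonnegative remainder. Try first $\bigl(cx^2+2(1-c^2)x-\ldots\bigr)^2$, i.e.\ complete the square in $cx^2$ (Artin's original trick). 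The leftover should be $\ge0$ only up to a constant, and that constant is absorbed by the $+24$ (which after dividing by $x^2$ and using $x>1$ contributes the slack).

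This is the main obstacle: getting a clean sum-of-squares certificate. My fallback is a direct calculus argument. For fixed $x$, maximize $g(c)=(1-c^2)(x^2-2cx+1)^2$ over $c\in[-1,1]$; the derivative gives a quadratic in $c$ whose critical point is explicit. Substituting it and comparing with $x^4+6x^2$ then becomes a polynomial inequality in $x$, which one checks for $x\ge1$. The unused $x^2$-slack handles small $x$.

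Finally, since $\intZ[e]\subseteq\intring{K}$ has full rank, we have $\poldisc{e}=[\intring{K}:\intZ[e]]^2\disc{K}$. Hence $|\disc{K}|\le|\poldisc{e}|\le4e^3+24$, which is the second assertion.
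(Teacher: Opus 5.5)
Your reduction is correct and is the paper's own: $\rho=e^{-1/2}$ from the positive norm, $|\poldisc{e}|=4x^{-2}(1-c^2)(x^2-2cx+1)^2$ with $x=e^{3/2}$, and $\poldisc{e}=[\intring{K}:\intZ[e]]^2\disc{K}$ at the end. The gap is that the inequality $(1-c^2)(x^2-2cx+1)^2\le x^4+6x^2$ is the entire content of the lemma, and you never prove it. Your remarks about it are also off in two ways.

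First, the inequality is false for $x>0$ as you first state it. At $c=0$ it reads $x^4+2x^2+1\le x^4+6x^2$, which fails for $x<1/2$. At $x=1$ the left side has maximum $27/4$ (at $c=-1/2$) against $7$. So the hypothesis $e>1$ must be used in an essential way. Second, the $+24$ has no spare constant to absorb. The $+24$ \emph{is} the term $6x^2$ on the right, and it is sharp as $x\to\infty$ (see the remark after the lemma). There is no ``unused $x^2$-slack''.

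Completing the square, with $+c$ rather than a minus, gives the identity
\[
  x^4+6x^2-(1-c^2)(x^2-2cx+1)^2=\bigl(cx^2+2(1-c^2)x+c\bigr)^2+4c^2x^2-1 ,
\]
which is the paper's $F=G^2+16t^2s^2-4$ divided by $4$. The remainder $4c^2x^2-1$ is negative whenever $\gamma\coloneqq2|c|x<1$. The missing step is to show that in this regime the square alone is at least $1-\gamma^2$.

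The paper does this in three moves:
\begin{itemize}
\item It bounds $cx^2+2(1-c^2)x+c\ge2(1-c^2)x-|c|(x^2+1)=\tfrac12\bigl((4-\gamma)x-(\gamma^2+\gamma)/x\bigr)$.
\item Since $y\mapsto(4-\gamma)y-(\gamma^2+\gamma)/y$ is increasing, $x\ge1$ gives the lower bound $\tfrac12(4-2\gamma-\gamma^2)>0$. This is where $e>1$ enters.
\item Finally, $(4-2\gamma-\gamma^2)^2-4(1-\gamma^2)=\gamma^4+4\gamma^3-16\gamma+12\ge1$ on $[0,1]$.
\end{itemize}

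Your calculus fallback could in principle work. However, the interior critical point $c=\bigl(x^2+1-\sqrt{(x^2+1)^2+32x^2}\bigr)/(8x)$ involves a radical, so substituting it does not directly give a polynomial inequality in $x$. As written, nothing in that route is actually checked.
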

\begin{proof}
As $e>1$ is irrational it generates $K$, so $\poldisc{e}=[\intring{K}:\intZ[e]]^2\disc{K}$
and $|\disc{K}|\le|\poldisc{e}|$. Let $\sigma_1$ be the real embedding, so that $\sigma_1(e)=e$,
and let $\sigma_2,\sigma_3$ be the two complex embeddings, say
$\sigma_2(e)=\rho\exp(\sqrt{-1}\phi)$ and $\sigma_3(e)=\rho\exp(-\sqrt{-1}\phi)$
with $\rho>0$ and $\phi\in(0,\pi)$. The norm $e\rho^2=\pm1$ is positive, hence
equals $1$, so $\rho=e^{-1/2}$. Then
\[
  \poldisc{e}=\prod_{i<j}(\sigma_i e-\sigma_j e)^2
      =-4e^{-1}\sin^2\!\phi\,(e^2+e^{-1}-2e^{1/2}\cos\phi)^2<0.
\]
In terms of $s=e^{3/2}>1$ and $t=\cos\phi\in[-1,1]$ this reads
$|\poldisc{e}|=4s^{-2}(1-t^2)(s^2-2st+1)^2$, so the assertion
$|\poldisc{e}|\le4s^2+24$ is, after multiplication by $s^2$, that
\[
  F(s,t)\coloneqq4s^4+24s^2-4(1-t^2)(s^2-2st+1)^2\ge0.
\]
One verifies the identity
\[
  F(s,t)=G(s,t)^2+16t^2s^2-4,\quad G(s,t)=2ts^2+4(1-t^2)s+2t.
\]
If $16t^2s^2\ge4$, then $F\ge0$ at once. Otherwise set $\gamma=2|t|s\in[0,1)$; we
must show $G^2\ge4-4\gamma^2$. With $|t|=\gamma/(2s)$,
\[
  G\ge4(1-t^2)s-2|t|(s^2+1)=(4-\gamma)s-\tfrac{\gamma^2+\gamma}{s}=:H(s),
\]
and $H'(s)>0$, so $H(s)\ge H(1)=4-2\gamma-\gamma^2\ge1>0$. Hence
\[
  G^2-(4-4\gamma^2)\ge(4-2\gamma-\gamma^2)^2-4(1-\gamma^2)
   =\gamma^4+4\gamma^3-16\gamma+12=:\psi(\gamma),
\]
and $\psi'(\gamma)=4(\gamma-1)(\gamma+2)^2\le0$ on $[0,1]$ gives
$\psi(\gamma)\ge\psi(1)=1>0$. Thus $F\ge0$, proving the claim.
\end{proof}

\begin{remark}
The bound is sharp: as $e\to\infty$ the optimal angle has $\cos\phi\sim-2e^{-3/2}$
and $\max_\phi|\poldisc{e}|=4e^3+24-o(1)$, so $24$ cannot be lowered.
\end{remark}

The second inequality is elementary.

\begin{lemma}\label{lem:size}
\textup{(i)} For every real $b\ge7$,
$\ 27(b^3-4)>4\bigl(3(b^2+b+1)+1\bigr)^{3/2}+24$.
\textup{(ii)} For every real $c\ge2$, and also for $c=1$,
$\ 27(c^3+4)>4\bigl(3(c^2-c+1)+1\bigr)^{3/2}+24$.
\end{lemma}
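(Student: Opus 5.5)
The plan is to clear the fractional exponent by squaring, which turns each part into a polynomial inequality in one real variable. That polynomial inequality is then verified at the left endpoint, and a monotonicity or positivity argument extends it to the whole range.

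For (i), set $u=3b^2+3b+4=3(b^2+b+1)+1>0$. Since $27(b^3-4)-24=27b^3-132>0$ for $b\ge7$, the claim is equivalent to
\[
  D(b)\coloneqq(27b^3-132)^2-16u^3>0 .
\]
Expanding $u^3=27b^6+81b^5+189b^4+243b^3+252b^2+144b+64$ gives
\[
  D(b)=297b^6-1296b^5-3024b^4-11016b^3-4032b^2-2304b+16400 .
\]
The essential point is that $27^2=729>16\cdot27=432$, so the leading coefficient $297$ is positive. In other words, the main terms compare as $27b^3$ against $4\cdot3^{3/2}b^3\approx20.8\,b^3$.

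A direct evaluation gives $D(7)=34941753-33034680+16400>0$. To cover all real $b\ge7$, I would substitute $b=7+t$ with $t\ge0$ and expand $D(7+t)$. I expect every coefficient to be nonnegative, which settles (i). If some low-order coefficient turns out negative, the fallback is a cruder estimate. For $b\ge7$ one has $1296b^5+3024b^4+11016b^3+4032b^2+2304b\le b^5(1296+3024/7+11016/49+\dots)$. This reduces the claim to $297b\ge$ an explicit constant near $1972$, i.e.\ $b\ge6.64$, after which the inequality holds for $b\ge7$ outright.

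For (ii), write $b=-c$, so that $b^2+b+1=c^2-c+1$. Set $v=3c^2-3c+4>0$. Since $27(c^3+4)-24=27c^3+84>0$, the claim is again equivalent, after squaring, to
\[
  E(c)\coloneqq(27c^3+84)^2-16v^3>0 .
\]
This time the cross term $+2\cdot27\cdot84\,c^3$ helps, and the odd-degree terms of $v^3$ alternate in sign. As a result $E(c)=297c^6+1296c^5-\cdots$ has at most a few negative coefficients of moderate size. I would check $c=1$ directly ($135$ against $4\cdot7^{3/2}+24\approx98$) and $c=2$ directly ($300$ against $4\cdot10^{3/2}+24\approx150$). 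Then, for $c\ge2$, I would dominate the negative terms by $297c^6+1296c^5$, using $c^k\le c^6/2^{6-k}$.

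The main obstacle is purely bookkeeping: carrying out the expansions correctly and then choosing a domination scheme that actually closes at the endpoint $b=7$. The value $b=7$ is close to the true threshold; the proof of (i) must fail for smaller $b$, since the theorem needs $b\ge7$. So the crude bounds must be applied with care there, and the Taylor expansion around $7$ is the safer route.
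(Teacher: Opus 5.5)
Part (i) is correct and is essentially the paper's argument. Your $D(b)$ agrees with the paper's $\Phi(b)$. Your fallback, bounding the negative terms by $b^k\le b^5/7^{5-k}$ and then using $297b\ge2079$, is the paper's bound $b^k\le b^6/7^{6-k}$ with different bookkeeping. The constant is about $1965.5$ rather than $1972$, which does not matter. Your Taylor shift also works: every coefficient of $D(7+t)$ is positive, from $D(7)=1923473$ and $D'(7)=8564562$ up to the leading $297$.

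Part (ii), however, does not close as you describe it. Your expansion $E(c)=297c^6+1296c^5-3024c^4+8424c^3-4032c^2+2304c+6032$ is correct. The problem is that the negative terms cannot be dominated by $297c^6+1296c^5$ near the endpoint: at $c=2$,
\[
297\cdot2^6+1296\cdot2^5=60480<64512=3024\cdot2^4+4032\cdot2^2 .
\]
This failure persists for real $c$ up to about $2.07$, so your direct check at $c=2$ does not cover the real interval just above it. The specific device $c^k\le c^6/2^{6-k}$ is worse still. It turns $3024c^4+4032c^2$ into $1008c^6>297c^6$, and $1296c^5$ cannot be bounded below by a positive multiple of $c^6$ on $c\ge2$.

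The missing ingredient is the large positive cubic term. For $c\ge2$ one has $4032c^2\le2016c^3$ and
\[
1296c^5-3024c^4=c^4(1296c-3024)\ge-432c^4\ge-108c^6 ,
\]
which gives $E(c)\ge189c^6+6408c^3+2304c+6032>0$. This is exactly the paper's argument. Alternatively, your Taylor shift from (i) also works here, since all coefficients of $E(2+t)$ are positive ($E(2)=74000$, $E'(2)=151200$, \dots).

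There are also two harmless numerical slips:
\begin{itemize}
\item At $c=1$ one has $3(c^2-c+1)+1=4$, so the right-hand side is $4\cdot4^{3/2}+24=56$, not $4\cdot7^{3/2}+24$.
\item At $c=2$ the left-hand side is $27\cdot12=324$; the number $300$ you quote is $27c^3+84$.
\end{itemize}
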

\begin{proof}
(i) Both sides are positive and $27(b^3-4)-24=27b^3-132>0$ for $b\ge7$, so squaring,
the claim is $\Phi(b)\coloneqq(27b^3-132)^2-16(3b^2+3b+4)^3>0$, i.e.
\[
  \Phi(b)=297b^6-1296b^5-3024b^4-11016b^3-4032b^2-2304b+16400>0.
\]
Bounding each negative term by $b^k\le b^6/7^{6-k}$, valid for $b\ge7$, leaves
$\Phi(b)\ge\tfrac{272439}{16807}b^6+16400>0$.

(ii) Likewise $27(c^3+4)-24=27c^3+84>0$, so squaring, the claim is
$\Psi(c)\coloneqq(27c^3+84)^2-16(3c^2-3c+4)^3>0$, i.e.
\[
  \Psi(c)=297c^6+1296c^5-3024c^4+8424c^3-4032c^2+2304c+6032>0.
\]
For $c\ge2$ the bounds $1296c^5-3024c^4\ge-432c^4\ge-108c^6$ and
$4032c^2\le2016c^3$ leave $\Psi(c)\ge189c^6+6408c^3+2304c+6032>0$; and
$\Psi(1)=11297>0$, the asserted inequality reading $135>56$ there.
\end{proof}

\subsection{The main theorem}

\begin{theorem}[Fundamental unit]\label{thm:main}
Let $b$ satisfy $3\nmid b$ and $b^3-4$ squarefree, and let either $b\ge7$ or
$b\le-1$. Then $\eta_b$ is the fundamental unit of $K_b$.
\end{theorem}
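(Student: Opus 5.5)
By \eqref{eq:fundiff} it suffices to show that $\eta_b$ is not of the form $u^q$ with $q$ prime and $u>1$ a unit of $\OK$. I will argue by contradiction, assuming $\eta_b=u^q$ with $q\ge2$. The argument plays Artin's inequality (Lemma~\ref{lem:artin}) for $u$ against the lower bound \eqref{eq:dkbound} for $|\disc{K_b}|$.

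First, $u$ is a unit greater than $1$ in a non-totally real cubic field, so Lemma~\ref{lem:artin} gives
\[
  |\disc{K_b}|\le 4u^3+24 .
\]
Since $u=\eta_b^{1/q}\le\eta_b^{1/2}$, Lemma~\ref{lem:etasize} yields $u<(P+1)^{1/2}$ with $P=3(b^2+b+1)$. Hence
\[
  |\disc{K_b}|<4(P+1)^{3/2}+24 .
\]
Monotonicity of $x\mapsto 4x^3+24$ is what allows the case $q=2$ to cover every prime $q$ at once.

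Second, the hypotheses $3\nmid b$ and $b^3-4$ squarefree place us in the regime of Corollary~\ref{cor:idxmain}, so \eqref{eq:dkbound} applies. This gives $|\disc{K_b}|\ge 27(b^3-4)$ for $b\ge7$ (indeed for $b\ge2$). For $b=-c\le-1$ it gives $|\disc{K_b}|\ge 27(c^3+4)$.

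Third, I compare the two bounds using Lemma~\ref{lem:size}. For $b\ge7$, part (i) says $27(b^3-4)>4(P+1)^{3/2}+24$, which contradicts the upper bound from the first step. For $b=-c$ with $c\ge1$, we have $P=3(c^2-c+1)$, so part (ii) (which covers $c\ge2$ and $c=1$) gives the same contradiction. Therefore no such $u$ exists and $\eta_b=\eta_0$.

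The only delicate point is that for $b\le-1$ we have $\eta_b>P$ (Remark~\ref{rem:etaP}), which is why $P+1$ must be used throughout; Lemma~\ref{lem:size} was stated with $P+1$ for exactly this reason. The excluded values $2\le b\le6$ either violate the hypotheses ($b=3,6$, and even $b$, since squarefreeness of $b^3-4$ forces $b$ odd) or fall outside the range of the inequality ($b=5$). So no further case analysis is needed.
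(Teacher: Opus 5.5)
Your proof is correct and follows the paper's argument exactly: Artin's inequality (Lemma~\ref{lem:artin}) applied to $u\le\eta_b^{1/2}<(P+1)^{1/2}$ is played against the lower bound \eqref{eq:dkbound}, and Lemma~\ref{lem:size} supplies the contradiction in both sign cases. One small correction to your closing aside: $b=5$ also violates the hypotheses, since $5^3-4=11^2$ is not squarefree (indeed $b=5$ is a square-exception), so every $2\le b\le6$ is excluded by the hypotheses themselves.
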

\begin{proof}
Suppose $\eta_b=u^q$ for a unit $u>1$ and a prime $q\ge2$. Then
$u\le\eta_b^{1/q}\le\eta_b^{1/2}$, so Lemma~\ref{lem:etasize} gives
$u^3<\bigl(3(b^2+b+1)+1\bigr)^{3/2}$, and Lemma~\ref{lem:artin}, applied to $u$,
yields
\begin{equation}\label{eq:mainchain}
  |\disc{K_b}|\le4u^3+24<4\bigl(3(b^2+b+1)+1\bigr)^{3/2}+24 .
\end{equation}
For $b\ge7$, \eqref{eq:dkbound} gives $|\disc{K_b}|\ge27(b^3-4)$, and the
right-hand side of~\eqref{eq:mainchain} is smaller by Lemma~\ref{lem:size}(i).
For $b=-c\le-1$ it gives $|\disc{K_b}|\ge27(c^3+4)$, while $b^2+b+1=c^2-c+1$, and
the right-hand side of~\eqref{eq:mainchain} is smaller by
Lemma~\ref{lem:size}(ii). Either
way~\eqref{eq:mainchain} contradicts the lower bound, so $\eta_b$ is not a proper
power and~\eqref{eq:fundiff} gives the theorem. \textup(In the extreme case
$b=-1$, where $a=1$, $\disc{K_{-1}}=-135$ and
$\eta_{-1}=3.104\ldots$, the chain reads $135\le56$.\textup)
\end{proof}

\begin{lemma}\label{lem:nofixsq}
Let $f(x)=x^3-4$ or $f(x)=x^3+4$. For every prime $q$ there is an
$x\in(\intZ/q^2\intZ)^\times$ with $f(x)\not\equiv0\pmod{q^2}$; in particular $f$ has
no fixed square divisor.
\end{lemma}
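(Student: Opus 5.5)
The plan is to exhibit, for each prime $q$, an explicit unit $x$ modulo $q^2$ at which $f$ does not vanish modulo $q^2$. Three cases suffice: $q=2$, $q=3$, and $q\ge5$. The sign of the constant term only changes which residues we pick, so the case $f(x)=x^3+4$ is reduced to $x^3-4$ by the substitution $x\mapsto -x$. Indeed $(-x)^3+4=-(x^3-4)$, and $-x$ is a unit modulo $q^2$ exactly when $x$ is. It therefore suffices to treat $f(x)=x^3-4$.

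For $q=2$ we take $x=1$: then $f(1)=-3$, which is odd, so $4\nmid f(1)$.

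For $q=3$ we take $x=1$: then $f(1)=-3$, and $9\nmid -3$.

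For $q\ge5$ the key step is to choose two units $x$ and $x'$ for which $f(x)$ and $f(x')$ cannot both be divisible by $q^2$.
\begin{enumerate}
\item[(a)] Take $x=1$ and $x'=1+q$, both units since $q\nmid 1$. Then $f(1+q)-f(1)=3q+3q^2+q^3\equiv 3q \pmod{q^2}$.
\item[(b)] Because $q\ge5$, we have $3q\not\equiv0\pmod{q^2}$, so $f(1)$ and $f(1+q)$ cannot both vanish modulo $q^2$.
\end{enumerate}
A slicker route is to observe that $f(1)=-3$ is prime to $q$ for every $q\ne3$, so $x=1$ already works for all $q\neq3$. For $x^3+4$ the same choice works directly, since $f(1)=5$: this handles every $q\ne5$, and for $q=5$ one checks $f(2)=12$, which is not divisible by $25$.

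The final clause about fixed square divisors follows at once. If $q^2$ divided $f(n)$ for every integer $n$, it would in particular divide $f(x)$ for the unit residue $x$ found above, a contradiction.

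The only point needing care is the bookkeeping at the small primes $2$, $3$ and $5$, where the generic choice of $x$ might fail. I expect no genuine obstacle beyond verifying those few numerical values.
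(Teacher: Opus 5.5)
Your proof is correct, but it takes a more elementary route than the paper. The paper treats $q=3$ by noting that cubes of units modulo $9$ are $\pm1$, so that \emph{every} unit is a witness. It treats $q\ge5$ by Hensel's lemma: $x^3\equiv\pm4\pmod{q^2}$ has at most three solutions, fewer than the $q(q-1)\ge20$ units.

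You instead exhibit one explicit witness, and your ``slicker route'' already proves everything at once. The values $f(1)=-3$ for $x^3-4$ and $f(1)=5$ for $x^3+4$ are squarefree, so $x=1$ works for every prime $q$ and both signs. You undersell this slightly when you say $f(1)=5$ handles only $q\neq5$: since $25\nmid5$, $x=1$ works at $q=5$ as well. The check $f(2)=12$, the pair $(1,1+q)$ and the reduction $x\mapsto-x$ are therefore all correct but unnecessary.

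In exchange, the paper's counting argument gives quantitative information. It shows there are no bad unit residues at $q=3$ and at most three for $q\ge5$. That is the kind of information behind the quantity $\varrho(q)$ in the density product of the remark after Corollary~\ref{cor:primeb}. For the lemma itself, and for the hypotheses of Erd\H{o}s' and Helfgott's theorems, one witness per $q$ is all that is needed, and yours is about as short as it gets.
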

\begin{proof}
For $q=2$ take $x=1$: both $1-4$ and $1+4$ are $\equiv1\pmod4$. For $q=3$ the
cubes of the units modulo $9$ are $\pm1$, so $x^3\mp4\equiv\pm1\mp4\not\equiv0
\pmod9$ for every unit $x$. For $q\ge5$ the congruence $x^3\equiv\pm4\pmod{q^2}$
has at most three solutions, by Hensel's lemma applied to $x^3\mp4$ (whose
derivative $3x^2$ is a unit at any solution, as $q\nmid3\cdot4$), while
$\#(\intZ/q^2\intZ)^\times=q(q-1)\ge20$.
\end{proof}

\begin{theorem}\label{thm:inf}
There are infinitely many integers $b$ for which $\eta_b$ is the fundamental unit
of $K_b$; in fact,
\begin{equation*}
\#\set{0<b\le X | \eta_b\ \text{is the fundamental unit}}\gg X.
\end{equation*}
\end{theorem}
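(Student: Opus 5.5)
By Theorem~\ref{thm:main}, it suffices to count the integers $7\le b\le X$ with $3\nmid b$ and $b^3-4$ squarefree; every such $b$ has $\eta_b$ equal to the fundamental unit. So the statement reduces to showing that this set has positive lower density.

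The key input is Erd\H{o}s's theorem~\cite{MR56635} on squarefree values of irreducible cubic polynomials. In its quantitative form it says the following. Let $f\in\intZ[x]$ be irreducible of degree $3$ with no fixed square divisor. Then the set of $n$ in a fixed residue class $n\equiv r\pmod m$ for which $f(n)$ is squarefree has density
\[
  \frac1m\prod_{q\nmid m}\Bigl(1-\frac{\rho(q^2)}{q^2}\Bigr),
\]
corrected at the primes dividing $m$. Here $\rho(q^2)=\#\{x\bmod q^2: f(x)\equiv0\}$. The polynomial $x^3-4$ is irreducible, since $4$ is not a cube. By Lemma~\ref{lem:nofixsq} it has no fixed square divisor. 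So I apply this to $f(x)=x^3-4$, restricted to the progression $b\equiv1\pmod3$ (or to both classes $b\equiv1,2\pmod3$).

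The local factors are handled as follows.
\begin{itemize}
\item At $q=3$: for $3\nmid b$ we have $b^3\equiv\pm1\pmod9$, so $b^3-4\not\equiv0\pmod9$ automatically. The condition $3\nmid b$ therefore costs only the factor $2/3$ and creates no conflict.
\item At $q=2$: squarefreeness of $b^3-4$ forces $b$ odd, which gives a local factor of $1/2$ and is compatible with the progression.
\item At $q\ge5$: Hensel's lemma, as in the proof of Lemma~\ref{lem:nofixsq}, gives $\rho(q^2)\le3$. Hence each factor is at least $1-3/q^2$, and the product converges to a positive constant.
\end{itemize}
Combining these, the count is $\ge cX+o(X)$ for some $c>0$. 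Discarding the finitely many $b<7$ costs $O(1)$, which proves $\gg X$.

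The main obstacle is that Erd\H{o}s's theorem as usually stated gives only infinitude, or an asymptotic over all $n$, rather than an asymptotic along an arithmetic progression with an explicit positive density. I would handle this in the standard way. Write $\mu^2(n)=\sum_{d^2\mid n}\mu(d)$. For $d\le\xi$ with $\xi$ a small power of $X$, count directly with the Chinese remainder theorem; this yields the expected product of local densities. The large divisors are the difficult range, and there I would invoke Erd\H{o}s's argument (or Hooley's refinement) to show their contribution is $o(X)$. Since that argument is insensitive to a fixed congruence condition modulo $6$, I would cite it rather than reprove it. Alternatively, the progression can be absorbed into the polynomial by applying the theorem to $g(n)=(6n+1)^3-4$; $g$ remains irreducible with no fixed square divisor, since $g(n)$ is odd and $\not\equiv0\pmod 9$, and the remaining primes are handled as in Lemma~\ref{lem:nofixsq}.
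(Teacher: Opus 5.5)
Your argument is correct and matches the paper's proof. Theorem~\ref{thm:main} reduces the claim to counting $7\le b\le X$ with $b\equiv1\pmod 3$ and $b^3-4$ squarefree, and a squarefree-values theorem for the irreducible cubic $x^3-4$ (no fixed square divisor by Lemma~\ref{lem:nofixsq}) gives $\gg X$ such $b$. Your caveat is also well taken, and the paper passes over it: Erd\H{o}s's theorem by itself gives only infinitely many squarefree values, so the density bound really rests on Hooley's asymptotic form, and your substitution $b=6n+1$ applied to $(6n+1)^3-4$ is a clean way to impose the congruence condition.
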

\begin{proof}
By Lemma~\ref{lem:nofixsq}, $x^3-4$ has no fixed square divisor. Erd\H{o}s'
theorem \cite{MR56635} on squarefree values of an irreducible polynomial with no
fixed square divisor, restricted to the class $b\equiv1\pmod3$, then gives
\begin{equation*}
\#\{b\le X:\ b\equiv1\ (3),\ b^3-4\ \text{squarefree}\}\gg X.
\end{equation*}
Such $b$ have $3\nmid b$ and, $b^3-4$ being squarefree, $b$ odd; all but finitely
many are $\ge7$, so Theorem~\ref{thm:main} applies.
\end{proof}

A set of positive density need not contain any prime, so the following refinement
is not a consequence of Theorem~\ref{thm:inf}. It rests on a theorem of
Helfgott~\cite{Helfgott2014}: if $f(x)\in\intZ[x]$ is a cubic polynomial with no
repeated roots such that for every prime $q$ the congruence $f(x)\equiv0
\pmod{q^2}$ fails for at least one $x\in(\intZ/q^2\intZ)^\times$, then $f(p)$ is
squarefree for infinitely many primes $p$.

\begin{corollary}\label{cor:primeb}
There are infinitely many primes $p$ such that, for $b=p$, the unit $\eta_b$ is
the fundamental unit of $K_b$ and
$\bigl\{1,\theta,(\theta^2-3b)/b\bigr\}$ is an integral basis of $\OK$, with
$\disc{K_b}=-27b(b^3-4)$. The same holds for $b=-p$ for infinitely many primes $p$.
\end{corollary}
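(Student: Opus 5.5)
The plan is to apply Helfgott's theorem, as recalled just before the statement, to the two cubics $x^3-4$ and $x^3+4$. For $b=-p$ we have $b^3-4=-(p^3+4)$, so squarefreeness of $b^3-4$ at $b=-p$ is the same as squarefreeness of $p^3+4$. Both cubics have no repeated roots: their discriminants are $-27\cdot16\neq0$, or simply $\gcd(f,f')=1$ since $f'=3x^2$ and $f(0)\neq0$. Lemma~\ref{lem:nofixsq} supplies exactly Helfgott's local hypothesis: for every prime $q$ there is a unit $x\in(\intZ/q^2\intZ)^\times$ with $f(x)\not\equiv0\pmod{q^2}$. Helfgott's theorem therefore gives infinitely many primes $p$ with $p^3-4$ squarefree, and infinitely many primes $p$ with $p^3+4$ squarefree.

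Next I check the hypotheses of Corollary~\ref{cor:idxmain} and Theorem~\ref{thm:main} for $b=\pm p$ with $p$ one of these primes. Discarding finitely many primes, I may take $p\ge7$, so that $p\neq3$, and hence $3\nmid b$.

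$b=\pm p$ is squarefree because $p$ is prime.

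$b^3-4=\pm(p^3\mp4)$ is squarefree, which forces $p$ odd. This is automatic anyway, since $2^3\mp4\in\{4,12\}$ is not squarefree.

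For $b=p\ge7$ we are in the range $b\ge7$. For $b=-p$ we have $b\le-1$ for every $p$.

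Corollary~\ref{cor:idxmain} then gives $a=|b|$, the integral basis $\{1,\theta,(\theta^2-3b)/b\}$ and $\disc{K_b}=-27b(b^3-4)$. Theorem~\ref{thm:main} gives that $\eta_b$ is the fundamental unit.

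There is no real obstacle here; all the analytic depth sits in Helfgott's theorem. The only point needing care is that Helfgott's hypothesis asks for a non-root modulo $q^2$ among the units. This is exactly how Lemma~\ref{lem:nofixsq} was phrased, and it includes $q=2,3$, where the units mod $4$ and mod $9$ were handled explicitly. I would also note in passing that infinitely many primes for the negative sign cannot be read off from the positive-sign family. It needs the separate application of Helfgott's theorem to $x^3+4$, which is why Lemma~\ref{lem:nofixsq} treats both polynomials.
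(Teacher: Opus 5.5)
Your proposal is correct and follows the paper's proof essentially verbatim. You apply Helfgott's theorem to $x^3-4$ and $x^3+4$, with the local hypothesis supplied by Lemma~\ref{lem:nofixsq}, and discard the small primes. Corollary~\ref{cor:idxmain} then gives the integral basis and the discriminant, and Theorem~\ref{thm:main} gives fundamentality.
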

\begin{proof}
Neither $x^3-4$ nor $x^3+4$ has a repeated root, their discriminants being
$-27\cdot16$, so Lemma~\ref{lem:nofixsq} supplies the hypotheses of Helfgott's
theorem. Hence $p^3-4$ is squarefree for infinitely many primes $p$, and so is
$p^3+4=|(-p)^3-4|$. Discard the finitely many $p\le5$; the remaining $p$ are
squarefree with $3\nmid p$, so Corollary~\ref{cor:idxmain} applies to $b=p$ and to
$b=-p$, giving $a=p$, the integral basis and the discriminant, and
Theorem~\ref{thm:main} gives fundamentality, $b=p$ being $\ge7$ and $b=-p$ being
$\le-1$.
\end{proof}

\begin{remark}
By Theorem~\ref{thmA:exc} no such $b$ is a square-exception; consistently, a
prime $p\ge5$ never occurs in the list of Theorem~\ref{thm:closed}. Numerically,
of the $2260$ primes $5\le p<20000$, exactly $2102$ have $p^3-4$ squarefree and
$2094$ have $p^3+4$ squarefree, i.e.\ $93.0\%$ and $92.7\%$; the expected density is
\[
  \prod_q\Bigl(1-\frac{\varrho(q)}{q^2-q}\Bigr)=0.926485\ldots ,
\]
the same for both signs, where $\varrho(q)$ denotes the number of
$x\in(\intZ/q^2\intZ)^\times$ with $x^3\equiv\pm4\pmod{q^2}$.
\end{remark}

\section{The exceptional set and the index criterion}\label{sec:exc}

We now describe the $b$ for which $\eta_b$ is \emph{not} the fundamental unit.
By~\eqref{eq:fundiff} this amounts to describing the $b$ for
which $\eta_b$ is a proper power $u^q$ with $q$ prime and $u>1$ a unit. The case
$q=2$ is governed by a Pell equation (Theorem~\ref{thm:iff}); the case $q=3$
occurs only for $b=9$ (Lemma~\ref{lem:cube9}); and all $q\ge5$ are excluded by an explicit
mild lower bound on $|\disc{K_b}|$ (Theorem~\ref{thm:starq}). Thus, under that
bound, ``not fundamental'' and ``square'' coincide.

\subsection{The square case is a Pell equation}

\begin{theorem}\label{thm:iff}
Let $b\neq0,1$. Then $\eta_b$ is a square in $\OK^\times$ if and only if the system
\[
  \text{\rm(I)}\ B^2-2A=3(b^2+b+1),\quad
  \text{\rm(II)}\ A^2-2B=3(b+1)
\]
has a solution $(A,B)\in\intZ^2$. Moreover, for a given $A\in\intZ$, an integer $B$
with \textup{(I)},\textup{(II)} exists if and only if $3A(A-2)$ is the square of an
integer, necessarily of the form $(3E)^2$ with $E\in\intZ$; equivalently
\[
  D^2-3E^2=1,\qquad D\coloneqq A-1 .
\]
In that case $A=1\pm D$ and, for a sign $\sigma=\pm1$,
\begin{equation}\label{eq:branch}
  B=2A^2-3-3\sigma E(A+1),\qquad
  b=\tfrac13\bigl(A^2-2B\bigr)-1=1-A^2+2\sigma E(A+1).
\end{equation}
\end{theorem}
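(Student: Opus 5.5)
The plan is to translate ``$\eta_b$ is a square'' into a statement about the characteristic polynomial of a square root of $\eta_b^{-1}$, and then to eliminate $b$ and $B$ by elementary algebra.

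\emph{Reduction to symmetric functions.} By \S\ref{sec:setup}, $\eta_b^{-1}$ is the reciprocal of a root of $h(X)=X^3-PX^2+QX-1$, so its minimal polynomial is $X^3-QX^2+PX-1$. Here $P=3(b^2+b+1)$ and $Q=3(b+1)$.

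Suppose first that $\eta_b=u^2$ with $u\in\OK^\times$, and put $w=u^{-1}$, so $w^2=\eta_b^{-1}$. Replacing $w$ by $-w$ if necessary, I may assume $\Norm(w)=1$. Its characteristic polynomial is then $X^3-AX^2+BX-1$ with $A,B\in\intZ$. For conjugates $w_1,w_2,w_3$ one has $\sum w_i^2=A^2-2B$, $\sum_{i<j}w_i^2w_j^2=B^2-2A\cdot\Norm(w)=B^2-2A$ and $\prod w_i^2=1$. Comparing these with the coefficients of the minimal polynomial of $w^2=\eta_b^{-1}$ gives exactly (II) and (I).

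Conversely, given $(A,B)$ satisfying (I) and (II), let $w_i$ be the roots of $X^3-AX^2+BX-1$. The same identities show that the $w_i^2$ are the roots of $X^3-QX^2+PX-1$. Take $w_1$ real. Then $w_1^2$ is a real root of that polynomial, and since it has only one real root, $w_1^2=\eta_b^{-1}$. Hence $K_b=\ratQ(\eta_b^{-1})\subseteq\ratQ(w_1)$, and since $\deg w_1\le3$ this forces $\ratQ(w_1)=K_b$. Moreover $w_1$ is an algebraic integer with constant term $1$, hence a unit of $\OK$. Therefore $\eta_b=(w_1^{-1})^2$ is a square in $\OK^\times$. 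I expect this converse step (membership of $w_1$ in $K_b$) to be the main conceptual point, and the argument just given handles it.

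\emph{Elimination.} Put $t=b+1$. Then (II) reads $3t=A^2-2B$, and $b^2+b+1=t^2-t+1$. Multiply (I) by $3$, substitute $3t=A^2-2B$, and clear denominators. This should give a quadratic in $B$:
\[
  B^2+(6-4A^2)B+A^4-3A^2+6A+9=0,
\]
whose reduced discriminant is
\[
  (2A^2-3)^2-(A^4-3A^2+6A+9)=3A(A-2)(A+1)^2 .
\]
Hence $B=2A^2-3\pm(A+1)\sqrt{3A(A-2)}$.

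\emph{Integrality and the Pell equation.} For $A\ne-1$, $B$ is an integer if and only if $\sqrt{3A(A-2)}$ is rational, hence an integer. That integer is divisible by $3$ (because $3\mid$ its square), so it equals $3E$. Then $9E^2=3\bigl((A-1)^2-1\bigr)$, which is $D^2-3E^2=1$ with $D=A-1$. The case $A=-1$ must be checked separately: there $3A(A-2)=9$ is a square anyway, so the equivalence still holds, with $D=-2$ and $E=\pm1$. Solving for $A$ gives $A=1\pm D$.

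Finally, with $\sigma$ recording the sign, $B=2A^2-3-3\sigma E(A+1)$. Substituting this into $b=\tfrac13(A^2-2B)-1$ yields $b=1-A^2+2\sigma E(A+1)$, which is \eqref{eq:branch}. In particular this $b$ is automatically an integer, so no separate divisibility check is needed for $b$.
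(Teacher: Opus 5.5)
Your proof is correct and follows the paper's own argument. Newton's identities for the squared conjugates give (I),(II); the converse takes a real root of the cubic, whose square is the real root $\eta_b^{-1}$, so that root lies in $K_b$. Eliminating $b$ through (II) then gives a quadratic in $B$ with discriminant $12A(A+1)^2(A-2)$, which leads to the Pell equation. Working with $u^{-1}$ and $\eta_b^{-1}$ instead of $\alpha$ and $\eta_b$ is only a relabeling: inversion swaps the two middle coefficients, so you get the same pair $(A,B)$.

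Three of your steps slightly tighten the paper's argument:
\begin{enumerate}
\item Your degree argument for $\mathbb{Q}(w_1)=K_b$ makes the paper's irreducibility check unnecessary.
\item You treat $A=-1$ separately. There the discriminant vanishes, so the paper's step from $12A(A+1)^2(A-2)$ to $3A(A-2)$ does not apply literally.
\item You observe that the resulting $b$ is automatically an integer.
\end{enumerate}
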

\begin{proof}
Assume $\eta_b=\alpha^2$ with $\alpha\in\OK^\times$. Replacing $\alpha$ by $-\alpha$
if necessary we may assume $\alpha>1$ in the real embedding, and then
$\Norm(\alpha)=\alpha|\sigma_2(\alpha)|^2>0$, so $\Norm(\alpha)=1$. The minimal
polynomial of $\alpha$ is thus $h_0(X)=X^3-BX^2+AX-1$ with $A,B\in\intZ$, and
Newton's identities for the squares of its conjugates give (I),(II).

Next, assume $(A,B)$ solves (I), (II).
Set $h_0(X)=X^3-BX^2+AX-1$ with roots
$\alpha_i$ ($e_1=B,e_2=A,e_3=1$). The monic cubic with roots $\alpha_i^2$ is
$X^3-(B^2-2A)X^2+(A^2-2B)X-1=h(X)$ by (I),(II). The polynomial $h_0(X)$ is
irreducible: a rational root is $\pm1$, and $h_0(1)=A-B$, $h_0(-1)=-(A+B+2)$;
$A=B$ forces (via (I)$-$(II)$=3b^2$) $b=0$, and $A+B=-2$ likewise forces $b=0$, both excluded. As
$K_b$ is not totally real, $h(X)$ has one real root $\eta_b$; squares of reals are
real, so $h_0(X)$ has exactly one real root $\alpha$ with $\alpha^2=\eta_b$. Then
$\ratQ(\alpha)$ is cubic containing $\eta_b$, hence $\ratQ(\alpha)=K_b$, and $\alpha\in\OK$ is a unit
($\Norm\alpha=e_3=1$). Thus $\eta_b=\alpha^2$ is a square.

It remains to eliminate $b$ from
(I), (II). Since $b^2+b+1=(b+1)^2-(b+1)+1$, three times (I) reads
$3(B^2-2A)=(A^2-2B)^2-3(A^2-2B)+9$ once $3(b+1)=A^2-2B$ is substituted from (II);
expanding gives
\begin{equation}\label{eq:quartic}
  A^4-4A^2B+B^2-3A^2+6A+6B+9=0 .
\end{equation}
Read as a quadratic in $B$, \eqref{eq:quartic} is
$B^2-(4A^2-6)B+(A^4-3A^2+6A+9)=0$, whose discriminant is
$12A(A+1)^2(A-2)$.
Hence, for a given $A\in\intZ$, an integer $B$ satisfying \eqref{eq:quartic}
exists if and only if $12A(A+1)^2(A-2)$ is a perfect square, i.e.\ if and only if
$3A(A-2)$ is; and as $3\mid 3A(A-2)$, such a square is divisible by $9$, say
$3A(A-2)=(3E)^2$ with $E\in\intZ$. Solving the quadratic then gives
$B=2A^2-3-3\sigma E(A+1)$ with $\sigma=\pm1$, and (II) returns
$b=\tfrac13(A^2-2B)-1=1-A^2+2\sigma E(A+1)$, which is \eqref{eq:branch}.
Finally, with $D=A-1$ we have $3A(A-2)=3\bigl((A-1)^2-1\bigr)=3(D^2-1)$, so
$3A(A-2)=9E^2$ reads $D^2-3E^2=1$; conversely each solution of $D^2-3E^2=1$ gives
the two values $A=1\pm D$, for which $3A(A-2)=3D^2-3=(3E)^2$ is a square. This
proves the last two assertions.
\end{proof}

Since the Pell equation $D^2-3E^2=1$ has infinitely many solutions
$(D,E)=(D_n,E_n)$, $D_n+E_n\sqrt3=(2+\sqrt3)^n$, Theorem~\ref{thm:iff} gives
infinitely many square-exceptions. In closed form, the two sign branches $A=D+1$ and
$A=1-D$ give the following.

\begin{theorem}\label{thm:closed}
Let $(D_n,E_n)_{n\in\intZ}$ be defined by $D_n+E_n\sqrt3=(2+\sqrt3)^n$, so that
$(D,E)=(D_n,E_n)$ runs through all solutions of $D^2-3E^2=1$ up to sign. Then
the square-exceptions are exactly
the values
\[
  b_\pm(n)=\tfrac12(E_{2n-1}-1)\pm2E_{n-1}\qquad(n\in\intZ),
\]
so that $b_++b_-=E_{2n-1}-1$ and $b_+-b_-=4E_{n-1}$,
other than the degenerate ones $b_-(0)=1$ and $b_+(1)=b_-(1)=b_-(-1)=0$. The
indices $n\ge2$ give the positive exceptions and the indices $n\le0$ the negative
ones.
\end{theorem}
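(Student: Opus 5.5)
The plan is to feed the Pell parametrization into the branch formula~\eqref{eq:branch} of Theorem~\ref{thm:iff} and simplify with the standard addition formulas for $(D_n,E_n)$.

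\textbf{Step 1: one formula for all branches.} By Theorem~\ref{thm:iff}, $b$ is a square-exception if and only if
\[
b=1-A^2+2\sigma E(A+1)
\]
with $A=1\pm D$, $\sigma=\pm1$ and $D^2-3E^2=1$.

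The integer solutions of $D^2-3E^2=1$ are exactly $(D,E)=(\pm D_n,\pm E_n)$ with independent signs. Changing the sign of $E$ is absorbed by $\sigma$, and the choice $A=1-D$ is the choice $A=1+D'$ with $D'=-D$. So every square-exception has the form
\[
b=-D^2-2D+2\sigma E(D+2),\qquad (D,E)=(\tau D_n,E_n),\ \tau,\sigma\in\{\pm1\},\ n\in\intZ ,
\]
and conversely every such value is a square-exception.

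\textbf{Step 2: reduce to a two-parameter family.} Next I would use the relation $(D_{-n},E_{-n})=(D_n,-E_n)$, which follows from $(2+\sqrt3)^{-1}=2-\sqrt3$, to remove one of the signs. After this, the family should be indexed by $n\in\intZ$ and one remaining sign, and that sign should become the $\pm$ in $b_\pm$.

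\textbf{Step 3: rewrite in the target form.} I will use the identities
\[
D_{2k}=2D_k^2-1,\qquad E_{2k}=2D_kE_k,\qquad E_{j+k}=D_jE_k+E_jD_k,\qquad D_{j+k}=D_jD_k+3E_jE_k ,
\]
together with $D_{\pm1}=2$ and $E_{\pm1}=\pm1$.

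The decisive substitution is to write $D_n+2$ and $\tau D_n$ through the neighbouring index. For instance,
\[
E_n+E_{n-1}\ \text{and}\ D_n-D_{n-1}\ \text{are expressible via } (2\pm\sqrt3)^{1/2}\text{-type half-steps,}
\]
and one checks the identities $E_{n-1}=2E_n-D_n$ and $D_{n-1}=2D_n-3E_n$. Using these, $-D^2-2D$ should become $\tfrac12(E_{2m-1}-1)$ minus a cross term, and $2E(D+2)$ should supply $\pm2E_{m-1}$ plus the same cross term, for a suitable index $m$ tied to $n$.

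\textbf{Step 4: fix the index matching.} I expect $m=n+1$ or $m=1-n$ according to $\tau$; the small cases settle which. At $n=0$, $(D,E)=(1,0)$, so $\tau=1$ gives $b=-3$ and $\tau=-1$ gives $b=1=b_-(0)$. Checking $n=\pm1,\pm2$ against $b_\pm(m)$ in the same way pins down the correspondence. Once it is guessed, the identity in Step 3 is a routine verification, done by expanding everything in $\lambda=2+\sqrt3$ and $\lambda^{-1}$.

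This bookkeeping is the main obstacle: the four sign choices $(\tau,\sigma)$ and the reflection $n\mapsto-n$ must collapse exactly onto the two families $b_\pm(m)$, $m\in\intZ$, without loss or duplication.

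\textbf{Step 5: the remaining assertions.} The relations $b_++b_-=E_{2n-1}-1$ and $b_+-b_-=4E_{n-1}$ are immediate from the definition of $b_\pm(n)$. The degenerate values are found by evaluating $b_\pm$ at $n=0,\pm1$ with $E_{-2}=-4$, $E_{-1}=-1$, $E_0=0$, $E_1=1$.

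For the sign statement, I would use the fact that $E_{2n-1}$ grows like $\lambda^{2n-1}/(2\sqrt3)$ while $E_{n-1}$ grows only like $\lambda^{|n-1|}$. Hence for $n\ge2$ the term $\tfrac12 E_{2n-1}$ dominates and $b_\pm(n)>0$, and for $n\le0$ the negative $E_{2n-1}$ dominates and $b_\pm(n)<0$. An explicit check of $n=2$, where $E_3=15$ and $E_1=1$ give $b_\pm=7\pm2>0$, together with monotonicity of the $E_k$, makes this rigorous.
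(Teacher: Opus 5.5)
Your plan follows the paper's proof: substitute the Pell solutions into \eqref{eq:branch}, absorb $\sigma$ by $n\mapsto-n$, and let the branch sign $\tau$ of $A=1+\tau D_n$ become the $\pm$ of $b_\pm$. However, the one step with real content is left as a guess, and the guess is wrong, because there is no index shift. Using $D^2=3E^2+1$, your expression becomes $b=(2ED-3E^2-1)+(4E-2D)$. The doubling formulas $E_{2n}=2D_nE_n$ and $3E_n^2=\tfrac12(D_{2n}-1)$, together with the one-step shift $E_{k-1}=2E_k-D_k$ applied at $k=2n$ and at $k=n$, give
\[
2E_nD_n-3E_n^2-1=\tfrac12(E_{2n-1}-1),\qquad 4E_n-2D_n=2E_{n-1}.
\]
So $\tau=+1$ gives exactly $b_+(n)$. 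For $\tau=-1$, writing $(-D_n,E_n)=(-D_{-n},-E_{-n})$ turns $b$ into $(2E_{-n}D_{-n}-3E_{-n}^2-1)-(4E_{-n}-2D_{-n})=b_-(-n)$. Thus $m=n$ for $\tau=+1$ and $m=-n$ for $\tau=-1$, not $n+1$ or $1-n$. Your own values at $n=0$ already show this: $-3=b_+(0)$ and $1=b_-(0)$, whereas $b_\pm(1)=0$. No ``half-steps'' are needed. Once these two identities are written down, the bookkeeping you call the main obstacle is immediate; duplications are irrelevant to the statement anyway.

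In Step 5, monotonicity of $E_k$ plus the check at $n=2$ does not by itself control $\tfrac12(E_{2n-1}-1)-2E_{n-1}$; you need a growth comparison. For example, $E_{2n-1}=D_{n-1}E_n+E_{n-1}D_n\ge7E_{n-1}$ for $n\ge2$ gives $b_-(n)\ge\tfrac32E_{n-1}-\tfrac12>0$. On the negative side your domination claim is false for $b_-$ at $n=0$ and $n=-1$, where $b_-(0)=1$ and $b_-(-1)=0$; these are exactly the degenerate values. What you must show is $b_\pm(n)<0$ for $n\le-2$. This follows from $E_{1-2n}\ge E_{1-n}D_{-n}\ge7E_{1-n}$, which gives $\tfrac12(E_{1-2n}+1)>2E_{1-n}$. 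Add the explicit values $b_+(0)=-3$, $b_+(-1)=-16$ and $b_\pm(1)=0$.
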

\begin{proof}
With $(D,E)=(D_n,E_n)$ and $D^2=3E^2+1$, both branch formulas of
Theorem~\ref{thm:iff} collapse to $b_\pm=(2ED-3E^2-1)\pm(4E-2D)$. The Binet
identities $E_{2n}=2E_nD_n$, $3E_n^2=\tfrac12(D_{2n}-1)$,
$2E_{2n}-D_{2n}=E_{2n-1}$, $2E_n-D_n=E_{n-1}$ give
$2ED-3E^2-1=\tfrac12(E_{2n-1}-1)$ and $4E-2D=2E_{n-1}$. Since $E_{-n}=-E_n$ and
$D_{-n}=D_n$, replacing $n$ by $-n$ interchanges the r\^ole of the two remaining
sign branches of Theorem~\ref{thm:iff}, so letting $n$ run over all of $\intZ$
covers all four branches; the excluded values are precisely those giving
$b\in\{0,1\}$.
\end{proof}

The first few square-exceptions are
\begin{align*}
  b&\in\{5,9,96,112,1425,1485,20160,20384,\dots\}&&(n\ge2),\\
  b&\in\{-3,-16,-75,-135,-1344,-1568,\dots\}&&(n\le0);
\end{align*}
for each of them $\eta_b$ was confirmed to be a square, of even order under
PARI/GP's \texttt{bnfisunit()}. At a square-exception the regulator
$R=\log\eta_0$ is therefore at most $\tfrac12\log\eta_b\approx\log(\sqrt3\,|b|)$,
about half the generic value $\log\eta_b\approx2\log|b|$.

\subsection{When being a square is the only obstruction}

Being a square is one way for $\eta_b$ to fail to be fundamental; a priori
$\eta_b$ could be a $q$-th power for another prime $q$. For $q=3$ the Pell
equation of Theorem~\ref{thm:iff} is replaced by a cubic Thue equation, and only
one $b$ survives.

\begin{lemma}\label{lem:cube9}
For an integer $b\neq0,1$, the unit $\eta_b$ is the cube of a unit of $K_b$ if and
only if $b=9$.
\end{lemma}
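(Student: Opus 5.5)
We follow the proof of Theorem~\ref{thm:iff}, with squares replaced by cubes. Suppose $\eta_b=\alpha^3$ with $\alpha\in\OK^\times$. Since $3$ is odd we may take $\alpha>1$ real, so $\Norm(\alpha)=1$ and $\alpha$ has minimal polynomial $h_0(X)=X^3-BX^2+AX-1$ with $A,B\in\intZ$. The elementary symmetric functions of the cubes of the roots are $B^3-3AB+3$, $A^3-3AB+3$ and $1$. Comparing with $h(X)=X^3-PX^2+QX-1$ gives the system
\[
  B^3-3AB+3=3(b^2+b+1),\qquad A^3-3AB+3=3(b+1).
\]
For the converse, let $(A,B)$ solve this system. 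If $h_0$ had a root $\pm1$, then $\eta_b=\pm1$, which is impossible. So $h_0$ is irreducible, and its unique real root $\alpha$ satisfies $\alpha^3=\eta_b$. As before, $\ratQ(\alpha)=K_b$, so $\eta_b$ is a cube in $\OK^\times$. Thus the lemma reduces to showing that $b=9$ is the only $b\neq0,1$ for which this system is solvable.

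Next we reduce the system modulo $3$. The second equation gives $3\mid A^3$, so $A=3A'$; then the first gives $3\mid B^3$, so $B=3B'$. Substituting,
\[
  b=9A'(A'^2-B'),\qquad b(b+1)=9B'(B'^2-A').
\]
Eliminating $b$ leaves a single plane curve
\[
  C:\ B'(B'^2-A')=A'(A'^2-B')\bigl(9A'(A'^2-B')+1\bigr),
\]
and we must find its integral points. One checks directly that $(A',B')=(-1,2)$ lies on $C$ and gives $A'(A'^2-B')=1$, i.e.\ $b=9$, with $h_0=X^3-6X^2-3X-1$. This proves the ``if'' direction, and a PARI/GP check that $h_0^{\text{cubed}}=h$ at $b=9$ confirms it. The degenerate points, such as $A'=0$ or $A'^2=B'$, give $b=0$; $b=1$ is impossible because $9\mid b$.

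The main obstacle is showing that $C$ has no other integral points. We would first try an archimedean argument. The conjugates $\sigma_2(\alpha),\sigma_3(\alpha)$ have modulus $\alpha^{-1/2}$, so
\[
  B=\alpha+O(\alpha^{-1/2}),\qquad A=\alpha\bigl(\sigma_2\alpha+\sigma_3\alpha\bigr)+\alpha^{-1}=O(\alpha^{1/2}).
\]
Hence $|A'|\ll|B'|^{1/2}$. On the other hand, the right-hand side of $C$ is dominated by $9A'^2B'^2$ when $B'\gg A'^2$, while the left-hand side is about $B'^3$. This forces $|B'|\asymp A'^2$, i.e.\ $B'=A'^2+k$ with $k$ bounded. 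The natural substitution is therefore $t\coloneqq A'^2-B'=b/(9A')$. In the variables $(A',t)$ the curve $C$ becomes a cubic equation whose leading homogeneous part should factor as a binary cubic form, which is where the ``cubic Thue equation'' mentioned before the lemma comes from. We would either bound $|t|$ explicitly by comparing dominant terms, leaving finitely many small cases for a direct search, or, if no clean bound appears, solve the resulting Thue equation with PARI/GP's \texttt{thue()}, as in the verification scripts~\cite{KimuraCode}. Reading off $b=9A't$ from the solutions should leave only $b=9$, together with the excluded $b=0$.
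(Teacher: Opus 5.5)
\textbf{There is a genuine gap: the ``only if'' direction is not proved.} Your setup is correct and is exactly the paper's reduction: Newton's identities, $3\mid A$ and $3\mid B$, $b=9A'(A'^2-B')$, and the curve $C$. Indeed, $C$ simplifies to $B'^3-A'^3=9A'^2(A'^2-B')^2$, which is the paper's equation~\eqref{eq:e6}. The point $(A',B')=(-1,2)$, together with your converse argument, settles the ``if'' direction. But showing that $C$ has no other relevant integral points is the actual content of the lemma, and the route you sketch for it fails at each stage:
\begin{enumerate}
\item From $|B'|\asymp A'^2$ you cannot conclude that $B'=A'^2+k$ with $k$ bounded. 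Here $k=-t=-b/(9A')$, so a bound on $k$ would already bound $b$.
\item In the variables $(A',t)$ the curve is not a cubic. It reads $(A'^2-t)^3-9A'^2t^2=A'^3$, which has degree $6$. Its weighted leading part is a binary cubic form in $(A'^2,t)$, but the right-hand side $A'^3$ is not constant. So this is not a Thue equation, and \texttt{thue()} cannot be applied to it.
\item No comparison of dominant terms can bound $|t|$. Put $\tau=t/A'^2$; then $(1-\tau)^3-9\tau^2=A'^{-3}$. The limiting equation $(1-\tau)^3=9\tau^2$ has three real roots, approximately $0.227$, $-0.816$ and $-5.41$. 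Hence hypothetical large solutions are consistent to leading order in all three directions, and nothing archimedean excludes them. This is exactly why Diophantine approximation, via a Thue equation, is needed.
\end{enumerate}

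\textbf{The missing idea is a divisibility step} that turns the sextic into a genuine cubic Thue equation. The paper sets $d=\gcd(A',B')$ and writes $A'=a_1d$, $B'=gd$. This gives $g^3-a_1^3=9d\,a_1^2(da_1^2-g)^2$, so $a_1^2\mid g^3$ and hence $a_1=\pm1$. The result is the Thue equation~\eqref{eq:thue}, with $b=9a_1d^2(d-g)$, whose known solution list yields $b\in\{0,9\}$.

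The same works in your variables. First, $A'\neq0$, since otherwise $b=0$. Put $s=t/A'\in\ratQ$ and divide by $A'^3$ to get $(A'-s)^3-9A's^2=1$, that is,
\[
  s^3+6A's^2+3A'^2s+1-A'^3=0 .
\]
This is monic in $s$ with integer coefficients, so $s\in\intZ$. You therefore obtain the Thue equation $A'^3-3A'^2s-6A's^2-s^3=1$, with $b=9A'^2s$. The unimodular substitution $A'=a_1d$, $s=a_1(d-g)$ carries it to~\eqref{eq:thue}. Its integer solutions are $(A',s)=(1,0),(0,-1),(-1,1)$, which give $b\in\{0,9\}$. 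Only after this step does your plan become a proof.
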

\begin{proof}
Suppose $\eta_b=u^3$ with $u\in\OK^\times$. As $\eta_b>1$ we may take $u>1$; then
$\Norm(u)=1$, and $u\notin\ratQ$, so $u$ has minimal polynomial $X^3-BX^2+AX-1$ with
$A,B\in\intZ$. Comparing the power sums of $u^3$ with the coefficients
$(P,Q)=(3(b^2+b+1),\,3(b+1))$ of $h(X)$ (Newton's identities) gives
$B^3-3AB+3=P$ and $A^3-3AB+3=Q$, i.e.
\[
  A^3-3AB=3b,\quad B^3-3AB=3b(b+1),\quad\text{hence}\quad B^3-A^3=3b^2 .
\]
The first forces $3\mid A$, and then $B^3=A^3+3b^2$ forces $3\mid B$; write
$A=3A'$, $B=3B'$. The first equation then reads $b=9A'(A'^2-B')$, and
$B^3-A^3=3b^2$ becomes
\begin{equation}\label{eq:e6}
  B'^3-A'^3=9A'^2(A'^2-B')^2 .
\end{equation}
If $b\neq0$ then $A'\neq0$; set $d=\gcd(A',B')$, $A'=a_1d$,
$B'=gd$ with $\gcd(a_1,g)=1$. Substituting into~\eqref{eq:e6} and dividing by
$d^3$ gives $g^3-a_1^3=9d\,a_1^2(da_1^2-g)^2$; thus $a_1^2\mid g^3$, so
$a_1=\pm1$. With $a_1^2=1$ this becomes the Thue equation
\begin{equation}\label{eq:thue}
  g^3-9dg^2+18d^2g-9d^3=a_1\in\{\pm1\},\quad b=9a_1d^2(d-g).
\end{equation}
Equation~\eqref{eq:thue} is \emph{identical} to the one solved in
\cite[Lem.~2.2, eq.~(2.9)]{MR3363170}; its complete set of integer solutions is
$(g,d)\in\{(-2,-1),(1,0),(1,1)\}$ for $a_1=+1$ and
$(g,d)\in\{(2,1),(-1,0),(-1,-1)\}$ for $a_1=-1$ (a cubic Thue equation has finitely
many integer solutions, and this list was reconfirmed unconditionally in PARI/GP).
Through $b=9a_1d^2(d-g)$ every solution yields $b\in\{0,9\}$. Hence $\eta_b$ is a cube
only for $b\in\{0,9\}$; excluding the degenerate $b=0$ leaves $b=9$. Conversely
$b=9$ does give a cube: $\eta_9=\eta_0^{6}$.
\end{proof}

That value, $b=9=b_+(2)$, is a square-exception anyway, so $q=3$ contributes
nothing new. For $3\nmid b$ there is an elementary argument, which avoids the Thue
equation.

\begin{lemma}\label{lem:cube}
If $3\nmid b$ then $\eta_{b}$ is not the cube of a unit.
\end{lemma}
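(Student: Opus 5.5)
The plan is to reuse the first step of the proof of Lemma~\ref{lem:cube9}, which needs no Thue equation, and then get a contradiction from $3\nmid b$ by a divisibility argument.

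Suppose $\eta_b=u^3$ with $u\in\OK^\times$. Exactly as there, we may take $u>1$, so $\Norm(u)=1$, and $u$ has minimal polynomial $X^3-BX^2+AX-1$ with $A,B\in\intZ$. Newton's identities for the cubes of the roots, compared with the coefficients $P=3(b^2+b+1)$ and $Q=3(b+1)$ of $h(X)$, give
\[
  A^3-3AB=3b,\qquad B^3-3AB=3b(b+1).
\]
From the first equation $3\mid A^3$, so $3\mid A$. Subtracting the two equations gives $B^3=A^3+3b^2$, hence $3\mid B$.

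Write $A=3A'$ and $B=3B'$. The first equation becomes $27A'^3-27A'B'=3b$, that is,
\[
  b=9A'(A'^2-B').
\]
So $9\mid b$, and in particular $3\mid b$, contradicting the hypothesis $3\nmid b$. Hence $\eta_b$ is not a cube.

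There is no real obstacle here. The only point needing care is the Newton-identity computation of the power sums of the cubes, namely $e_1(u^3)=B^3-3AB+3$ and $e_2(u^3)=A^3-3AB+3$ when $e_3=1$. These were already recorded in the proof of Lemma~\ref{lem:cube9}, so the argument is just its first two lines made self-contained. As an alternative, one could argue $3$-adically: since $3$ is totally ramified when $3\nmid b$, one would track $\eta_b \bmod \mathfrak p^k$. The integer divisibility argument is shorter, so that is the one I would use.
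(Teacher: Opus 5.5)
Your proof is correct and follows the paper's own argument: Newton's identities give $A^3-3AB=3b$, which forces $3\mid A$ and then $3\mid b$. The paper skips your step $3\mid B$, because $A=3A'$ alone already gives $b=3A'(3A'^2-B)$; your stronger conclusion $9\mid b$ is equally valid.
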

\begin{proof}
Suppose $\eta_{b}=u^3$ for a unit $u$, with minimal polynomial $X^3-BX^2+AX-1$.
Matching the elementary symmetric functions of $u^3$ with $(P,Q)$ via
Newton's identities gives $B^3-3AB+3=P$ and $A^3-3AB+3=Q$. Subtracting,
$A^3-3AB=Q-3=3b$, i.e.\ $A(A^2-3B)=3b$, forcing $3\mid A$; writing $A=3A'$ gives
$b=3A'(3A'^2-B)$, so $3\mid b$, a contradiction. (The same argument shows that
$\eta_b$ is not a $q$-th power for any $q$ divisible by $3$.)
\end{proof}

The following condition disposes of all $q\ge5$ at once.

\begin{theorem}\label{thm:starq}
Let $b\neq0,1$ and assume
\begin{equation}\label{eq:star}
  |\disc{K_b}|\ \ge\ 4\bigl(3(b^2+b+1)+1\bigr)^{3/5}+24 .
\end{equation}
Then $\eta_b$ fails to be the fundamental unit of $K_b$ if and only if $\eta_b$ is
a square in $\OK^\times$, that is, if and only if $b$ is one of the values listed
in Theorem~\ref{thm:closed}.
\end{theorem}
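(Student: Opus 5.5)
By~\eqref{eq:fundiff}, $\eta_b$ fails to be fundamental exactly when $\eta_b=u^q$ for some prime $q$ and some unit $u>1$. So the plan is to sort the possible prime exponents into $q=2$, $q=3$ and $q\ge5$, and to show that under~\eqref{eq:star} only $q=2$ can occur, apart from a case already covered by $q=2$. The direction ``square $\Rightarrow$ not fundamental'' is immediate. If $\eta_b=\alpha^2$ with $\alpha\in\OK^\times$, then $\eta_b$ is a proper power, so it is not the fundamental unit. The identification of the squares with the list of Theorem~\ref{thm:closed} is exactly Theorems~\ref{thm:iff} and~\ref{thm:closed}, so nothing new is needed there.

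For the converse, assume $\eta_b$ is not fundamental and write $\eta_b=u^q$ with $q$ prime and $u>1$. If $q=2$ we are done. If $q=3$, Lemma~\ref{lem:cube9} forces $b=9$. Since $9=b_+(2)$ is a square-exception, $\eta_9$ is a square in any case; alternatively, $\eta_9=\eta_0^6=(\eta_0^3)^2$ shows this directly.

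It remains to exclude $q\ge5$, which is the only place the hypothesis~\eqref{eq:star} enters. The argument follows the proof of Theorem~\ref{thm:main} with the exponent $1/2$ replaced by $1/5$.
\begin{enumerate}
\item From $u=\eta_b^{1/q}\le\eta_b^{1/5}$ and Lemma~\ref{lem:etasize} we get $u^3<\bigl(3(b^2+b+1)+1\bigr)^{3/5}$. The inequality is strict because $\eta_b<P+1$.
\item Artin's inequality (Lemma~\ref{lem:artin}) applied to $u$ then gives
\[
|\disc{K_b}|\le 4u^3+24<4\bigl(3(b^2+b+1)+1\bigr)^{3/5}+24 .
\]
\item This contradicts~\eqref{eq:star}.
\end{enumerate}
Hence no prime $q\ge5$ occurs. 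I note that $P+1=3(b^2+b+1)+1>0$ for all $b$, so the real power is well defined.

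The step needing most care is matching the strict and non-strict inequalities so that the contradiction is airtight. This is why~\eqref{eq:star} is stated with $\ge$: step~2 gives a strict upper bound, so a non-strict lower bound suffices. A second point to check is that Lemma~\ref{lem:artin} genuinely applies to $u$. This holds because $u>1$ is a unit of the non-totally real cubic field $K_b$; $u$ is irrational since it is a unit $>1$ and not $\pm1$, and the lemma needs nothing more.

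No case distinction on the sign of $b$ is required, since Lemma~\ref{lem:etasize} is uniform in $b$. Combining the three ranges of $q$ gives the stated equivalence.
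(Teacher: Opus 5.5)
Your proposal is correct and follows the paper's proof essentially step for step. You use the same split into $q=2$, $q=3$ (Lemma~\ref{lem:cube9} forcing $b=9$, a square-exception) and $q\ge5$, and you close the last case with the same chain $|\disc{K_b}|\le4u^3+24\le4\eta_b^{3/5}+24<4(P+1)^{3/5}+24$ from Lemmas~\ref{lem:etasize} and~\ref{lem:artin}, contradicting~\eqref{eq:star}.
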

\begin{proof}
If $\eta_b=u^2$ with $u\in\OK^\times$, we may take $u>1$, and then $\eta_b$ is not
fundamental. Conversely, suppose $\eta_b$ is not fundamental. By
\eqref{eq:fundiff}, $\eta_b=u^q$ for some prime $q$ and some unit $u>1$. If $q=2$
we are done, and $q=3$ forces $b=9$ by Lemma~\ref{lem:cube9}, in which case
$\eta_b$ is a square by Theorem~\ref{thm:closed}. Assume then $q\ge5$. Since
$\eta_b>1$ we get $u=\eta_b^{1/q}\le\eta_b^{1/5}$, so $u^3\le\eta_b^{3/5}$, and
Lemma~\ref{lem:etasize} gives $\eta_b<P+1=3(b^2+b+1)+1$. Lemma~\ref{lem:artin},
applied to $u$, therefore yields
\[
  |\disc{K_b}|\ \le\ 4u^3+24\ \le\ 4\eta_b^{3/5}+24
  \ <\ 4\bigl(3(b^2+b+1)+1\bigr)^{3/5}+24 ,
\]
contradicting~\eqref{eq:star}. The last assertion is
Theorems~\ref{thm:iff} and~\ref{thm:closed}.
\end{proof}

\begin{remark}\label{rem:star}
Condition~\eqref{eq:star} is mild: its right-hand side is $\asymp|b|^{6/5}$, while
$|\disc{K_b}|$ is of size $|b|^4$ for generic $b$. In particular it holds
throughout the regime of Corollary~\ref{cor:idxmain}: there
$|\disc{K_b}|\ge27|b^3-4|$ by~\eqref{eq:dkbound}, and since $(P+1)^{3/5}\le P+1$ it
is enough that $27|b^3-4|\ge12b^2+12b+40$, which holds for every $b\le-1$ and every
$b\ge3$; the value $b=2$ is not in that regime, $b^3-4=4$ being non-squarefree.
\textup(For those $b$, Theorem~\ref{thm:main} gives the stronger conclusion that
$\eta_b$ \emph{is} fundamental.\textup)
Among the $5999$ integers $b\neq0,1$ with $|b|\le3000$, condition~\eqref{eq:star}
fails for exactly four values,
\[
  b=-75,\quad -3,\quad 3,\quad 9 ,
\]
all divisible by $3$: it is at $3$ that the factor $3^3$ of $\disc{K_b}$ coming
from wild ramification can be lost.
For $b=-75,-3,9$ the conclusion of Theorem~\ref{thm:starq} nevertheless holds,
these being square-exceptions: one computes $\eta_b=\eta_0^4,\eta_0^8,\eta_0^6$
respectively. For $b=3$ it fails: there $|\disc{K_3}|=23$ and
$[\intring{K_3}:\intZ[\theta]]=27$, so that the index absorbs the whole
discriminant except for the prime $23$. Here $K_3\cong\ratQ[x]/(x^3-x-1)$,
whose fundamental unit is the \emph{plastic number}
$\rho=1.3247\ldots$, the real root of $x^3=x+1$, and $\eta_3=\rho^{13}$. Thus
$\eta_3$ is not the fundamental unit, although it is neither a square
\textup(consistently with $3$ not being one of the values of
Theorem~\ref{thm:closed}\textup) nor a cube \textup(consistently with
Lemma~\ref{lem:cube9}\textup). So $b=3$ is a sporadic exception.
\end{remark}

\begin{remark}[The case $3\nmid b$ under an explicit $abc$ conjecture]\label{rem:abc}
The failures of~\eqref{eq:star} in Remark~\ref{rem:star} all have $3\mid b$. For
$3\nmid b$ write $|b|=Rk^2$ and $|b^3-4|=ts^2$ with $R$ and $t$ squarefree. Since
$v_p(\disc{K_b})=v_p(\poldisc{f_b(x)})-2v_p(a)$, Theorems~\ref{thm:idxprime}
and~\ref{thm:idx23} show that every prime dividing $Rt$ divides $\disc{K_b}$ and
that $v_3(\disc{K_b})=3+v_3(t)$, so
\begin{equation}\label{eq:dkRt}
  |\disc{K_b}|\ \ge\ 27\,R\,t\qquad(3\nmid b).
\end{equation}
Already $27Rt$ exceeds the right-hand side of~\eqref{eq:star} for every $b$ with
$3\nmid b$ and $|b|\le10^4$, so~\eqref{eq:star} holds there. Beyond that range
we can only offer a conditional statement. Baker's explicit form of the $abc$
conjecture \cite[Conj.~4]{Baker2004} (see also
\cite[Conj.~1.2]{LaishramShorey2012}) asserts that pairwise coprime
positive integers with $x+y=z$ satisfy $z<\frac65N(\log N)^m/m!$, where
$N=\mathrm{rad}(xyz)$ and $m$ is the number of primes dividing $N$.
\emph{Assuming this conjecture, if $3\nmid b$ and~\eqref{eq:star} fails, then
$\log|b|<2.18\cdot10^{12}$.}

We sketch the proof. Apply the conjecture to $4+(b^3-4)=b^3$ if $b>2$ is odd, to
$4+|b|^3=|b|^3+4$ if $b<-1$ is odd, and to these identities divided by $4$ if $b$
is even. In each case $z\ge|b|^3/4$ and
\[
  N\ \le\ 2\,\mathrm{rad}(b)\,\mathrm{rad}(b^3-4)\ \le\ 2Rk\cdot ts
  \ =\ 2(Rt)^{1/2}|b|^{1/2}|b^3-4|^{1/2}.
\]
If~\eqref{eq:star} fails and $|b|>10^4$, then~\eqref{eq:dkRt} gives
$Rt<0.29|b|^{6/5}$, hence $N<1.08|b|^{13/5}$. On the other hand $N$ is at least
the product of the first $m$ primes, so Robin's bound
$\sum_{i\le m}\log p_i\ge m(\log m+\log\log m-1.076869)$
\cite[Lemma~2.1(iii)]{LaishramShorey2012} gives $m\le W$, where $W$ is defined
by $W(\log W+\log\log W-1.076869)=\log N$. By Stirling's formula,
$m\log\log N-\log m!\le W(1+\log(\log N/W))$. The resulting upper bound for $\log z$
increases with $N$, so we may replace $\log N$ by
$L=\log(1.08|b|^{13/5})$, and $W$ accordingly. Comparing with $z\ge|b|^3/4$ then yields
\[
  \tfrac2{13}L\ <\ \log\tfrac{24}5+\tfrac{15}{13}\log1.08+W\bigl(1+\log(L/W)\bigr).
\]
This inequality fails as soon as $W\ge200\,699\,842\,747$, in particular for
$L\ge5.662\cdot10^{12}$. We verified this in exact rational arithmetic. Since
$\log|b|<\frac5{13}L$, the bound follows. It is effective but far beyond the
reach of any search, so whether~\eqref{eq:star} holds for all $b$ with
$3\nmid b$ remains open.
\end{remark}

\begin{remark}
  The exponent $q=2$ occurs infinitely often
  (Theorem~\ref{thm:closed}), $q=3$ occurs exactly once
  ($b=9$, Lemma~\ref{lem:cube9}), and $q=13$ occurs at $b=3$; no $q\ge17$
  occurs for $\lvert b\rvert\le3000$. Each of $q=5,7,11$ occurs at most finitely
  often: eliminating $b$ leads, for each of these $q$, to a plane curve of genus
  $\ge2$, to whose smooth model Faltings' theorem~\cite[Satz~7]{MR718935} applies. We defer
  the details, and the question of effectivity, to a future paper.
\end{remark}

\begin{remark}[{The order $\intZ[\eta_b]$}]\label{rem:order}
Theorem~\ref{thm:starq} asks whether $\eta_b$ generates $\OK^\times$. For the
order it generates the answer is known in full, and is different. As
$\epsilon_b=\theta-(b+1)$ is a unit,
$\intZ[\eta_b]=\intZ[\epsilon_b]=\intZ[\theta]$, and
Louboutin~\cite[Thm.~8]{Louboutin2015} \textup(see also \cite{Louboutin2016};
the cubic case goes back to Nagell~\cite{Nagell1930}\textup) states: a cubic
unit $u>1$ of negative discriminant is the fundamental unit of $\intZ[u]$
unless its minimal polynomial is $X^3-M^2X^2-2MX-1$ for some $M\ge1$, in which
case $u$ is a square in $\intZ[u]$, or is one of eight sporadic polynomials,
of discriminant $-23$, $-31$ or $-44$. Neither case occurs here. The first
would force $Q^2=4P$, and
\[
  Q^2-4P=9(b+1)^2-12(b^2+b+1)=-3(b-1)^2 ,
\]
which vanishes only at the excluded $b=1$; the second is impossible because
$|\poldisc{\eta_b}|=|\poldisc{f_b(x)}|=27|b|^3|b^3-4|\ge135$. Hence
\emph{$\eta_b$ is the fundamental unit of $\intZ[\eta_b]$ for every
$b\neq0,1$}, while by Theorem~\ref{thmA:exc} it is not the fundamental unit of
$\OK$ for infinitely many $b$. There is no contradiction: $\intZ[\theta]$ is
maximal only for $b=-1$, since every prime dividing $b$ divides $a$ by
Theorems~\ref{thm:idxprime} and~\ref{thm:idx23}; and at a square-exception the
square root of $\eta_b$ lies in $\OK\setminus\intZ[\theta]$, for otherwise
$h(X)$ would appear in the list just excluded.
\end{remark}

\subsection{The index criterion}

The next theorem makes observation~(ii) of \S\ref{sec:family} precise.

\begin{theorem}\label{thm:idx}
For a prime $p$ with $p\nmid b$,
\[
  p\mid[\OK:\intZ[\theta]]\quad\text{if and only if}\quad b^3\equiv4\pmod{p^2}.
\]
\end{theorem}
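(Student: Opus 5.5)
We derive the criterion from the prime-by-prime index formulas already established, splitting according to whether $p\ge5$, $p=3$ or $p=2$. Throughout, $p\nmid b$.

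\emph{Case $p\ge5$.} Theorem~\ref{thm:idxprime} gives $v_p(a)=\lfloor\tfrac12 v_p(b^3-4)\rfloor$ for $p\nmid b$. Hence $p\mid a$ if and only if $v_p(b^3-4)\ge2$, that is, if and only if $b^3\equiv4\pmod{p^2}$. This case is immediate.

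\emph{Case $p=3$.} Theorem~\ref{thm:idx23} gives $v_3(a)=0$ whenever $3\nmid b$, so the left side of the equivalence is false. For the right side, note that the cubes of units modulo $9$ are $\pm1$, as used in the proof of Lemma~\ref{lem:nofixsq}. Thus $b^3-4\equiv-3$ or $-5\pmod 9$, and in particular $b^3\not\equiv4\pmod9$. Both sides are therefore false, and the equivalence holds.

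\emph{Case $p=2$.} If $2\nmid b$, then Theorem~\ref{thm:idx23} gives $v_2(a)=0$. On the other side, $b^3-4$ is odd, so $b^3\not\equiv4\pmod4$. Both sides are again false.

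The proof therefore reduces to citing Theorems~\ref{thm:idxprime} and~\ref{thm:idx23} together with the two small congruence checks. No step is a real obstacle. The only point needing care is $p=3$: there one must use the explicit computation of the cubes modulo $9$ rather than a Hensel-type argument, since the derivative $3x^2$ of $x^3-4$ vanishes modulo $3$.
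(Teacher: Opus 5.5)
Your proof is correct. For $p=2$ and $p=3$ it is the paper's argument: Theorem~\ref{thm:idx23} handles the index side, and $b^3$ odd, respectively $b^3\equiv\pm1\pmod 9$, handles the congruence side.

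For $p\ge5$ you take a different route. You read the criterion off the exact valuation $v_p(a)=\lfloor\tfrac12 v_p(b^3-4)\rfloor$ of Theorem~\ref{thm:idxprime}. The paper does not invoke that formula. It applies Dedekind's criterion once, at the double root $x_0\equiv-b^2/2$ of $f_b(x)\bmod p$. That step gives $p\mid a$ if and only if $p^2\mid f_b(\tilde x)$ for a lift $\tilde x$ of $x_0$. The identity $f_b(-b^2/2)=-b^3(b^3-4)/8$, together with $p\nmid 2b$, then turns this into $p^2\mid b^3-4$.

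Your version is shorter and makes the theorem a literal corollary. But it relies on the full strength of Theorem~\ref{thm:idxprime} in the case $p\nmid b$, whose proof in the paper is only sketched: it is the claimed iteration of Dedekind's criterion, removing a factor $p^2$ at each step.

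The paper's version needs only the first application of Dedekind's criterion, which is exactly what detects whether $p$ divides the index. So the criterion stands independently of that iteration, and the explicit evaluation at $-b^2/2$ shows where the factor $b^3-4$ comes from. The paper's argument says nothing about the exact exponent of $p$ in $a$, which yours inherits, but the theorem does not need it.
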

\begin{proof}
For $p\in\{2,3\}$ both sides fail when $p\nmid b$: the left-hand side by
Theorem~\ref{thm:idx23} ($v_2(a)=0$ for $b$ odd, and $v_3(a)=0$ for $3\nmid b$),
and the right-hand side because $b^3$ is odd, (resp.~$b^3\equiv\pm1\pmod9$), so
that $b^3\not\equiv4\pmod4$, (resp.~$b^3\not\equiv4\pmod9$).

Let $p\ge5$, $p\nmid b$; then
$v_p(\poldisc{f_b(x)})=v_p(b^3-4)$. If $p\nmid b^3-4$ both sides fail. Otherwise
$f_b(x)$ has the double root $x_0\equiv-b^2/2$ and a simple root (as in
Theorem~\ref{thm:idxprime}). Dedekind's criterion at $(x-x_0)$: with lifts
$g(x)=(x-x_0)(x-x_1)$, $h(x)=(x-x_0)$ and $M(x)=\bigl(g(x)h(x)-f_b(x)\bigr)/p$,
$p$ divides $a$ if and only if $(x-x_0)$ divides $\bar M(x)$, that is, if and only
if $p^2\mid f_b(\tilde x)$ for a lift $\tilde x\equiv-b^2/2\pmod{p^2}$. Now
$f_b(-b^2/2)=-b^3(b^3-4)/8$ and $p\nmid2b$, so $p^2\mid f_b(\tilde x)$ if and only
if $p^2\mid b^3-4$, that is, if and only if $b^3\equiv4\pmod{p^2}$.
\end{proof}

\subsection{Square-exceptions have non-squarefree \texorpdfstring{$b^3-4$}{b\textasciicircum3-4}}

We finally prove that, with the single exception $b=-3$, the squarefreeness
hypothesis of Theorem~\ref{thm:main} excludes every
square-exception automatically; and $b=-3$ is in any case excluded by the
hypothesis $3\nmid b$ of those theorems
(Remark~\ref{rem:minus3}). Fix a square-exception $b$: $\eta_b=\alpha^2$ with
$\alpha\in\OK^\times$ of minimal polynomial $h_0(X)=X^3-BX^2+AX-1$, and $(A,B)$
solving (I),(II), as in Theorem~\ref{thm:iff}. Since $\epsilon_b=-\alpha^{-2}$ with
$\alpha^{-1}=\alpha^2-B\alpha+A\in\intZ[\alpha]$, we have
$\theta=(b+1)+\epsilon_b\in\intZ[\alpha]$, so
$\intZ[\theta]\subseteq\intZ[\alpha]\subseteq\OK$. Put
$m\coloneqq[\intZ[\alpha]:\intZ[\theta]]$.

\begin{lemma}\label{lem:reduc}
If $m$ has a prime factor $p\nmid b$, then $p^2\mid b^3-4$ \textup(and $p\nmid b$\textup),
so $b^3-4$ is non-squarefree.
\end{lemma}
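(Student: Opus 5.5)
Since $\intZ[\theta]\subseteq\intZ[\alpha]\subseteq\OK$, the index $m=[\intZ[\alpha]:\intZ[\theta]]$ divides $a=[\OK:\intZ[\theta]]$. If a prime $p\nmid b$ divides $m$, then $p$ divides $a$. Theorem~\ref{thm:idx} then gives $b^3\equiv4\pmod{p^2}$, i.e.\ $p^2\mid b^3-4$, which is exactly the claim. So the argument is short.

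\begin{enumerate}
\item Record the divisibility $m\mid a$. This follows from multiplicativity of indices in the tower of lattices: $a=[\OK:\intZ[\alpha]]\cdot m$.
\item Apply Theorem~\ref{thm:idx} to the prime $p\nmid b$ with $p\mid a$ to get $p^2\mid b^3-4$.
\item Conclude that $b^3-4$ is not squarefree.
\end{enumerate}

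The hypothesis $p\nmid b$ is exactly what Theorem~\ref{thm:idx} requires, and that theorem covers $p\in\{2,3\}$ as well: there both sides fail, so such $p$ cannot occur in $m$.

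An alternative, more self-contained route avoids Theorem~\ref{thm:idx} and uses discriminants directly. We have $\poldisc{f_b(x)}=m^2\poldisc{\alpha}$ with $\poldisc{\alpha}\in\intZ$, so $p^2\mid 27b^3(b^3-4)$. For $p\ge5$ with $p\nmid b$ this already gives $p^2\mid b^3-4$. For $p=3$ with $3\nmid b$ one would still need to exclude $3\mid m$; the cleanest way is $v_3(a)=0$ from Theorem~\ref{thm:idx23}. For $p=2$ with $b$ odd, $v_2(\poldisc{f_b(x)})=0$ settles it. So, either way, the small primes are handled by Theorem~\ref{thm:idx23}.

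I expect no real obstacle. The only point needing care is the divisibility $m\mid a$ and the primes $2$ and $3$, both of which are handled by earlier results. The substantive work presumably lies in a later lemma showing that $m$ actually has a prime factor $p\nmid b$ at a square-exception other than $b=-3$. That would go through comparing $\poldisc{h_0}$ with $\poldisc{h}$ using the parametrization of Theorem~\ref{thm:iff}, and it is not needed here.
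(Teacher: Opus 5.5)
Your main argument is correct and is the paper's proof: multiplicativity of indices in $\intZ[\theta]\subseteq\intZ[\alpha]\subseteq\OK$ gives $m\mid[\OK:\intZ[\theta]]$, and Theorem~\ref{thm:idx} then turns $p\mid[\OK:\intZ[\theta]]$ with $p\nmid b$ into $b^3\equiv4\pmod{p^2}$. Your discriminant-based alternative is also sound, but as you note it still needs Theorem~\ref{thm:idx23} at $p=3$, so it gains nothing over the direct route.
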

\begin{proof}
$[\OK:\intZ[\theta]]=[\OK:\intZ[\alpha]]\cdot m$, so $m\mid[\OK:\intZ[\theta]]$. If
$p\mid m$, $p\nmid b$, then $p\mid[\OK:\intZ[\theta]]$ and Theorem~\ref{thm:idx}
gives $b^3\equiv4\pmod{p^2}$.
\end{proof}

\begin{lemma}\label{lem:mAB}
$m=[\intZ[\alpha]:\intZ[\alpha^2]]=|AB-1|$; hence
$\poldisc{f_b(x)}=(AB-1)^2\poldisc{\alpha}$.
\end{lemma}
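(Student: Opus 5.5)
First I will identify the two orders. Since $\epsilon_b=-\alpha^{-2}$ and $\theta=(b+1)+\epsilon_b$, we have $\intZ[\theta]=\intZ[\epsilon_b]=\intZ[\eta_b]$, as observed in Remark~\ref{rem:order}. Moreover $\eta_b=\alpha^2$, so $\intZ[\eta_b]=\intZ[\alpha^2]$. Hence $m=[\intZ[\alpha]:\intZ[\theta]]=[\intZ[\alpha]:\intZ[\alpha^2]]$. The discriminant identity then follows from the index formula $\poldisc{\beta}=[\intZ[\alpha]:\intZ[\beta]]^2\poldisc{\alpha}$ for $\beta\in\intZ[\alpha]$ generating $K_b$. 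Here $\poldisc{\alpha^2}=\poldisc{\eta_b}=\poldisc{\theta}=\poldisc{f_b(x)}$, because $\eta_b=-1/(\theta-(b+1))$ and translation and the inversion $u\mapsto -u^{-1}$ of a unit of norm $\pm1$ both preserve the discriminant. The last point is the reciprocal-polynomial fact already used to get $h(X)$. So everything reduces to computing $[\intZ[\alpha]:\intZ[\alpha^2]]=|AB-1|$.

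For that I will write the basis $1,\alpha^2,\alpha^4$ of $\intZ[\alpha^2]$ in the basis $1,\alpha,\alpha^2$ of $\intZ[\alpha]$ and take the absolute value of the $3\times3$ determinant. Only $\alpha^4$ needs reducing. The minimal polynomial $h_0$ gives $\alpha^3=B\alpha^2-A\alpha+1$, and multiplying by $\alpha$ and substituting again yields
\[
\alpha^4=(B^2-A)\alpha^2+(1-AB)\alpha+B .
\]
The coordinate vectors are therefore $(1,0,0)$, $(0,0,1)$ and $(B,\,1-AB,\,B^2-A)$. Expanding along the first row, the determinant is $0\cdot(B^2-A)-1\cdot(1-AB)=AB-1$ up to sign. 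Its absolute value $|AB-1|$ is the index, and it is nonzero because $\alpha^2$ generates $K_b$.

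As a cross-check, one can instead factor the discriminant directly. For $\beta=\alpha^2$ the differences satisfy $\alpha_i^2-\alpha_j^2=(\alpha_i-\alpha_j)(\alpha_i+\alpha_j)$, and $\prod_{i<j}(\alpha_i+\alpha_j)=e_1e_2-e_3=BA-1$. This gives $\poldisc{\alpha^2}=(AB-1)^2\poldisc{\alpha}$ without invoking any index formula. I would in fact present this route for the second assertion, since it is self-contained and does not depend on sign conventions.

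I do not expect a serious obstacle. The only care needed is in justifying $\poldisc{\eta_b}=\poldisc{f_b(x)}$: if $\eta$ has roots $\eta_i=-1/\epsilon_i$, then $\eta_i-\eta_j=(\epsilon_i-\epsilon_j)/(\epsilon_i\epsilon_j)$. The product of the squared denominators over $i<j$ is $\Norm(\epsilon_b)^4=1$, which settles it.
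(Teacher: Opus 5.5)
Your proof is correct. For the discriminant identity, your cross-check is exactly the paper's argument: $\alpha_i^2-\alpha_j^2=(\alpha_i-\alpha_j)(\alpha_i+\alpha_j)$ and $\prod_{i<j}(\alpha_i+\alpha_j)=h_0(B)=AB-1$.

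Where you differ is the index. The paper never writes down a change of basis. It obtains $[\intZ[\alpha]:\intZ[\alpha^2]]=|AB-1|$ by comparing $\poldisc{\alpha^2}=(AB-1)^2\poldisc{\alpha}$ with the general relation $\poldisc{\alpha^2}=[\intZ[\alpha]:\intZ[\alpha^2]]^2\poldisc{\alpha}$. You instead compute the index directly, as $|\det|$ of the transition matrix from $(1,\alpha,\alpha^2)$ to $(1,\alpha^2,\alpha^4)$, using $\alpha^4=(B^2-A)\alpha^2+(1-AB)\alpha+B$.

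Your route is more hands-on and exhibits the lattice explicitly. It shows $\intZ[\alpha^2]=\intZ+(AB-1)\intZ\alpha+\intZ\alpha^2$, so $\intZ[\alpha]/\intZ[\alpha^2]$ is cyclic of order $|AB-1|$, generated by the class of $\alpha$. The paper's route is shorter and gets both assertions from a single symmetric-function identity.

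Your separate check that $\poldisc{\eta_b}=\poldisc{f_b(x)}$, via $\Norm(\epsilon_b)^4=1$, is correct but not needed. The equality of rings $\intZ[\eta_b]=\intZ[\theta]$ already forces equal discriminants, and that is how the paper argues.
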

\begin{proof}
With $r_1,r_2,r_3$ the roots of $h_0(X)$, so that $\sum_ir_i=B$,
$\poldisc{\alpha^2}=\prod_{i<j}(r_i^2-r_j^2)^2
=\poldisc{\alpha}\bigl(\prod_{i<j}(r_i+r_j)\bigr)^2$ and
$\prod_{i<j}(r_i+r_j)=\prod_k(B-r_k)=h_0(B)=AB-1$. Since
$\intZ[\alpha^2]=\intZ[\eta_b]=\intZ[\theta]$, we have
$\poldisc{\alpha^2}=\poldisc{f_b(x)}$; comparing with
$\poldisc{\alpha^2}=[\intZ[\alpha]:\intZ[\alpha^2]]^2\poldisc{\alpha}$ gives the
claim.
\end{proof}

The key to Theorem~\ref{thm:exc} is that, on the Pell parametrization of
Theorem~\ref{thm:iff}, both $b$ and $m$ factor and share the factor $u=A+1$.

\begin{lemma}\label{lem:uzw}
Let $b$ be a square-exception. By Theorem~\ref{thm:iff} we may write
\[
  A=1+sD,\qquad s=\pm1,\qquad D^2-3E^2=1,
\]
where $D>0$ and $E\in\intZ$ carries the sign $\sigma$ of Theorem~\ref{thm:iff},
so that $A^2-2A=3E^2$ and $b=1-A^2+2E(A+1)$, $B=2A^2-3-3E(A+1)$. Put
\[
  u\coloneqq A+1,\qquad w\coloneqq 1-A+2E,\qquad c\coloneqq1+E,\qquad
  z\coloneqq 2A^2-2A-1-3EA .
\]
Then
\begin{gather}
  b=uw,\qquad m=|uz|,\qquad u+w=2c, \label{eq:uzw1}\\
  u(u-4)=3c(c-2)=3(E^2-1),\quad w(w+4-4c)=-c(c-2)=1-E^2, \label{eq:uzw2}\\
  z=c+(3c-5)w=3c+u(2u-3c-3). \label{eq:uzw3}
\end{gather}
\end{lemma}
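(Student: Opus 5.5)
The plan is pure symbolic verification: substitute the Pell parametrization of Theorem~\ref{thm:iff} into each claimed identity. The only non-polynomial input is the relation $A^2-2A=3E^2$, which I would use as a rewriting rule. I would first record that relation. From $A=1+sD$ with $s=\pm1$ we get $A^2-2A=(A-1)^2-1=D^2-1=3E^2$, independently of $s$. The formulas $b=1-A^2+2E(A+1)$ and $B=2A^2-3-3E(A+1)$ are \eqref{eq:branch} with the sign $\sigma$ absorbed into $E$; this is legitimate since $E\mapsto-E$ preserves $D^2-3E^2=1$.

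For \eqref{eq:uzw1}, the factorization $b=uw$ is immediate:
\[
1-A^2+2E(A+1)=(A+1)\bigl((1-A)+2E\bigr).
\]
For $m$, I would invoke Lemma~\ref{lem:mAB}, which gives $m=|AB-1|$, and expand
\[
AB-1=2A^3-3A-1-3EA^2-3EA .
\]
Comparing with $(A+1)(2A^2-2A-1-3EA)$ term by term shows they agree identically, so $m=|uz|$. The identity $u+w=2+2E=2c$ is trivial.

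For \eqref{eq:uzw2}, both products reduce to the Pell relation. First, $u(u-4)=(A+1)(A-3)=A^2-2A-3=3E^2-3$, and also $3c(c-2)=3(E+1)(E-1)$. Second, $w+4-4c=1-A-2E$, so
\[
w(w+4-4c)=(1-A)^2-4E^2=(A^2-2A)+1-4E^2=1-E^2=-c(c-2).
\]

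For \eqref{eq:uzw3}, I would substitute $A=u-1$ and $E=c-1$. The second expression $3c+u(2u-3c-3)$ expands directly to $2A^2-3EA-2A-1=z$, with no relation needed. The first expression $c+(3c-5)w$ is the only place where some care is required, and I expect it to be the (mild) main obstacle. Writing $w=1-A+2E$ and expanding gives a polynomial in $A,E$ containing an $E^2$ term. I would eliminate that term with $3E^2=A^2-2A$ and then compare with $z$. If a discrepancy remains, I would instead derive the first form from the second: use $w=2c-u$ and the relation $u^2=4u+3c^2-6c$ from \eqref{eq:uzw2} to rewrite $u(2u-3c-3)$ linearly in $u$, and hence in $w$.
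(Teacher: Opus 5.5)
Your proposal is correct and is essentially the paper's proof: every identity is checked by direct expansion modulo $A^2-2A=3E^2$, with $m=|AB-1|$ taken from Lemma~\ref{lem:mAB} and the factorization $AB-1=(A+1)(2A^2-2A-1-3EA)$. The step you flag as the obstacle causes no trouble. $c+(3c-5)w$ expands to $6E^2-1+A(2-3E)$, which is exactly what $z$ becomes after substituting $2A^2=4A+6E^2$, so no discrepancy arises and the fallback is unnecessary (though it also works).
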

\begin{proof}
All six assertions are identities in $A$ and $E$ modulo $A^2-2A=3E^2$. First,
$b=(1-A)(1+A)+2E(A+1)=uw$ and $u+w=2(1+E)=2c$. Next, since
$2A^3-3A-1=(A+1)(2A^2-2A-1)$,
\begin{align*}
  AB-1&=A\bigl(2A^2-3-3E(A+1)\bigr)-1\\
      &=(A+1)(2A^2-2A-1)-3EA(A+1)=uz ,
\end{align*}
so $m=|AB-1|=|uz|$ by Lemma~\ref{lem:mAB}. The four identities
in~\eqref{eq:uzw2} and~\eqref{eq:uzw3} follow by expanding and reducing modulo
$A^2-2A=3E^2$; for instance $u(u-4)=A^2-2A-3=3E^2-3=3c(c-2)$, and both
expressions in~\eqref{eq:uzw3} equal $6E^2-1+A(2-3E)$.
\end{proof}

\begin{lemma}[The $b$-part of $z$]\label{lem:bpart}
Keep the notation of Lemma~\ref{lem:uzw} and assume $c\neq0$. Then
$v_p(z)\le v_p(3c)$ for every prime $p\mid b$. Consequently
\[
  z_b\coloneqq\prod_{p\mid b}p^{\,v_p(z)}\quad\text{divides }3c,
  \qquad\text{so}\qquad z_b\le3|c|=3|1+E| .
\]
\end{lemma}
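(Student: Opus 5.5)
The plan is to fix a prime $p\mid b$ and use the factorization $b=uw$ of Lemma~\ref{lem:uzw}. Then $p\mid u$ or $p\mid w$, and in each case I would read $z$ through whichever expression in \eqref{eq:uzw3} isolates a multiple of $c$:
\[
  z=3c+u(2u-3c-3)\quad\text{when }p\mid u,\qquad z=c+(3c-5)w\quad\text{when }p\mid w.
\]
Since $z_b=\prod_{p\mid b}p^{v_p(z)}$, the divisibility $z_b\mid3c$ and the bound $z_b\le3|c|$ follow at once from the local inequality $v_p(z)\le v_p(3c)$. The hypothesis $c\ne0$ is what makes $v_p(3c)$ finite, so that $3c\ne0$ and the bound is meaningful.

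\emph{Case $p\mid u$.} From $u(u-4)=3c(c-2)$ in \eqref{eq:uzw2}, for $p$ odd the factor $u-4$ is a $p$-unit, so $v_p(u)=v_p(3c)+v_p(c-2)\ge v_p(3c)$. If this inequality is strict, then in $z=3c+u(\cdots)$ the second term has larger valuation and $v_p(z)=v_p(3c)$. The remaining subcase is $v_p(c-2)=0$, where both terms of $z$ have the same valuation. There I would write $u=3c(c-2)/(u-4)$ exactly and substitute into $z$, getting
\[
  z=\frac{3c}{u-4}\bigl((u-4)+(c-2)(2u-3c-3)\bigr).
\]
It then suffices to show that $z(u-4)/(3c)$ is a $p$-adic integer whose valuation is at most $v_p(u-4)=0$, i.e.\ that $p$ does not divide the bracket. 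Since $p\mid u$, the bracket is congruent modulo $p$ to $-4-3(c-2)(c+1)$, and I would reduce this expression further using $u\equiv0\pmod p$, i.e.\ $3c(c-2)\equiv0$, so $c\equiv0$ or $c\equiv2$ modulo $p$ (the latter only for $p\ne3$). These give the bracket $\equiv2$ or $\equiv-13$ respectively, so the only potential trouble is $p=13$; I would then dispose of it, or of any other residual small prime, by reducing once more with the Pell relation $A^2-2A=3E^2$.

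\emph{Case $p\mid w$.} The argument is symmetric, using $w(w+4-4c)=-c(c-2)$ together with $z=c+(3c-5)w$. For $p$ odd with $p\mid c$, the factor $w+4-4c$ is a unit and $c-2$ is a unit, so $v_p(w)=v_p(c)$. I would again write $w=-c(c-2)/(w+4-4c)$ exactly, so that
\[
  z=\frac{c}{w+4-4c}\bigl((w+4-4c)-(3c-5)(c-2)\bigr),
\]
and check that the bracket is $\equiv4-10=-6\pmod p$. This gives $v_p(z)\le v_p(c)+v_p(6)\le v_p(3c)$ for $p\ne2$; the factor $3$ in the statement is precisely what absorbs the case $p=3$ here. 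If $p\nmid c$, then the identity forces $p\mid c-2$, whence $p\mid u=2c-w$ as well, which puts us back in the first case.

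The main obstacle I expect is the prime $p=2$, where $u-4$ and $w+4-4c$ need not be units. It also makes the coincidental small primes appearing in the brackets (such as $13$ above) need care. For $p=2$ I would first use parity: $b=uw$ even and $u+w=2c$ force $u$ and $w$ to be both even. I would then track $2$-adic valuations through the same two exact substitutions, using $D^2-3E^2=1$ to pin down the parities of $A$ and $E$, and hence of $u$, $w$ and $c$.
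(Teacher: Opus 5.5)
The proposal does not yet prove the lemma. The main gap is $p=2$, which you only announce. It is a real case: the square-exceptions $b=96,112,-16$ are even. With $\beta=(u-4)+(c-2)(2u-3c-3)$, your identity $v_2(z)=v_2(3c)-v_2(u-4)+v_2(\beta)$ says nothing until you show $v_2(\beta)\le v_2(u-4)$, because $u-4$ is not a $2$-adic unit. Parity information from $D^2-3E^2=1$ does not give this. The missing argument runs as follows.
\begin{itemize}
\item Since $u+w=2c$, $u\equiv w\pmod 2$, so $2\mid u$.
\item $c$ is even, since otherwise $3c(c-2)$ would be odd while $u(u-4)$ is even.
\item Hence $2u-3c-3$ is odd, the second summand of $\beta$ has valuation $v_2(c-2)$, and it suffices to show $v_2(c-2)\ne v_2(u-4)$.
\item If they were equal, $u(u-4)=3c(c-2)$ would give $v_2(u)=v_2(c)=t$. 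Then $t=1$ forces $v_2(u-4)=1<2\le v_2(c-2)$; $t=2$ forces $v_2(u-4)\ge3>1=v_2(c-2)$; and $t\ge3$ forces $v_2(u-4)=2>1=v_2(c-2)$. Each case is a contradiction.
\end{itemize}
You also need $u\ne4$ in order to divide: $u=4$ gives $c\in\{0,2\}$, and $c=2$ gives $w=0$, i.e.\ $b=0$. This case split on $t$ is the content of the paper's $p=2$ argument, which the paper runs through $u+w=2c$ and $z=c+(3c-5)w$ rather than through $\beta$.

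For odd $p$, the $w$-branch has two faulty steps.
\begin{itemize}
\item At $p=3$, put $\beta'=(w+4-4c)-(3c-5)(c-2)=w-3c^2+7c-6$. The congruence $\beta'\equiv-6\pmod 3$ only says $3\mid\beta'$; it gives no upper bound on $v_3(\beta')$. When $v_3(c)=v_3(w)=1$, say $c=3c_0$, $w=3w_0$, one has $\beta'\equiv3(w_0+c_0-2)\pmod 9$. This is nonzero only because $w(w+4-4c)=-c(c-2)$ forces $c_0\equiv2w_0\pmod 3$, a form of the delicate step in the paper's $p=3$ case.
\item In the subcase $p\nmid c$, the claim $p\mid u$ is false: $p\mid c-2$ gives $u=2c-w\equiv4\pmod p$.
\end{itemize}
Both are easily repaired, and the repair makes the $w$-branch unnecessary. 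If $p\mid w$ and $p\mid c$, then $p\mid u=2c-w$. If $p\mid w$ and $p\nmid u$, then $p\nmid u+w=2c$, so $z\equiv c\not\equiv0\pmod p$.

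Your $u$-branch, by contrast, is already complete for all odd $p$. The feared prime $13$ comes from an arithmetic slip: at $c\equiv2$ the expression $-4-3(c-2)(c+1)$ is $-4$, and that residue is anyway excluded by $v_p(c-2)=0$. For $p=3$ one has $\beta\equiv-4\pmod 3$ for every $c$. Once repaired, your substitution treats $p=3$ together with $p\ge5$ and is shorter than the paper's separate analyses. As written, however, it leaves $p=2$ open and contains the two errors above.
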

\begin{proof}
Let $p\mid b=uw$; we may assume $p\mid z$, and we write $i\coloneqq v_p(c)$.

\emph{Case $p\ge5$.} Suppose first $p\mid u$. Then $z=3c+u(2u-3c-3)$ forces
$p\mid3c$, hence $p\mid c$ and $i\ge1$. In $u(u-4)=3c(c-2)$ we have $p\nmid u-4$
(else $p\mid4$) and $p\nmid c-2$ (else $p\mid2$), so $v_p(u)=i$. Write
$u=p^iu_0$, $c=p^ic_0$ with $p\nmid u_0c_0$; dividing $u(u-4)=3c(c-2)$ by $p^i$
and reducing modulo $p$ gives $-4u_0\equiv-6c_0$, i.e.\ $2u_0\equiv3c_0$. As
$2u-3c-3\equiv-3\pmod p$,
\[
  z/p^i=3c_0+u_0(2u-3c-3)\equiv3(c_0-u_0)\pmod p,
\]
and $c_0\equiv u_0$ would give $2c_0\equiv3c_0$, i.e.\ $p\mid c_0$, which is
false. Hence $v_p(z)=i=v_p(3c)$.

Suppose next $p\mid w$. Then $z=c+(3c-5)w$ forces $p\mid c$, so $i\ge1$; in
$w(w+4-4c)=-c(c-2)$ we have $p\nmid w+4-4c$ (it is $\equiv4$) and $p\nmid c-2$, so
$v_p(w)=i$. Write $w=p^iw_0$; dividing and reducing modulo $p$ gives
$4w_0\equiv2c_0$, i.e.\ $c_0\equiv2w_0$. If $p=5$ then $5\mid3c-5$, so
$v_5\bigl((3c-5)w\bigr)\ge i+1>i=v_5(c)$ and $v_5(z)=i$. If $p\neq5$ then
$3c-5\equiv-5\not\equiv0$ and
\[
  z/p^i=c_0+(3c-5)w_0\equiv c_0-5w_0\equiv2w_0-5w_0=-3w_0\not\equiv0\pmod p,
\]
so again $v_p(z)=i=v_p(3c)$.

\emph{Case $p=2$.} As $u+w=2c$, the integers $u,w$ have the same parity, so
$2\mid b$ forces $2\mid u$ and $2\mid w$. Moreover $c$ is even: otherwise
$3c(c-2)$ would be odd while $u(u-4)$ is even, contradicting~\eqref{eq:uzw2}.
Hence $3c-5$ is odd and $v_2\bigl((3c-5)w\bigr)=v_2(w)$. Put
$t=\min\bigl(v_2(u),v_2(w)\bigr)\ge1$.

If $v_2(u)\neq v_2(w)$, then $v_2(2c)=v_2(u+w)=t$, so $v_2(c)=t-1<t\le v_2(w)$
and $z=c+(3c-5)w$ gives $v_2(z)=v_2(c)$.

If $v_2(u)=v_2(w)=t$, then $v_2(2c)=v_2(u+w)\ge t+1$, so $v_2(c)\ge t$. We claim
$v_2(c)>t$; granting this, $v_2(c)>t=v_2((3c-5)w)$ gives $v_2(z)=t<v_2(c)$.
Suppose then $v_2(c)=t$. Comparing $2$-adic valuations
in $u(u-4)=3c(c-2)$ gives $v_2(u-4)=v_2(c-2)$. If $t\ge3$ then $v_2(u-4)=2$
while $v_2(c)=t\ge3$ gives $v_2(c-2)=1$, a contradiction. If $t=2$, write
$u=4u'$ with $u'$ odd; then $v_2(u-4)=2+v_2(u'-1)\ge3$, while $v_2(c)=2$ gives
$v_2(c-2)=1$, again a contradiction. If $t=1$ then $v_2(u-4)=1$, while
$c\equiv2\pmod4$ gives $v_2(c-2)\ge2$, a contradiction. This proves the claim,
and in all cases $v_2(z)\le v_2(c)=v_2(3c)$.

\emph{Case $p=3$.} Here $v_3(3c)=i+1$, and $3c-5\equiv1\pmod3$, so
$v_3((3c-5)w)=v_3(w)$. If $v_3(w)\neq i$ then $z=c+(3c-5)w$ gives
$v_3(z)=\min(i,v_3(w))\le i$. If $v_3(w)=i=0$, then $3\mid b$ forces
$3\mid u$, hence $3\mid2u-3c-3$ and
$v_3(u(2u-3c-3))\ge2>1=v_3(3c)$, so $z=3c+u(2u-3c-3)$ gives
$v_3(z)=1$. There remains the case $v_3(w)=i\ge1$. Write $c=3^ic_0$,
$w=3^iw_0$ with $3\nmid c_0w_0$, so that
\[
  z=c+(3c-5)w=3^i\bigl(c_0-5w_0+3cw_0\bigr),\qquad v_3(3cw_0)=1+i\ge2 .
\]
We claim $c_0\equiv2w_0\pmod 9$; then $c_0-5w_0\equiv-3w_0\pmod9$ has
$v_3$ exactly $1$, whence $v_3(z)=i+1=v_3(3c)$, as required. To prove the claim,
divide $w(w+4-4c)=-c(c-2)$ by $3^i$:
\begin{equation}\label{eq:3adic}
  w_0\bigl(3^iw_0+4-4\cdot3^ic_0\bigr)=-c_0\bigl(3^ic_0-2\bigr).
\end{equation}
If $i\ge2$, reducing~\eqref{eq:3adic} modulo $9$ gives $4w_0\equiv2c_0$, and
multiplication by $5$ gives $c_0\equiv2w_0\pmod9$. If $i=1$,
then~\eqref{eq:3adic} reads $3w_0^2+4w_0-12c_0w_0=-3c_0^2+2c_0$, so modulo $9$
\[
  4w_0-2c_0\equiv-3\bigl(w_0^2-c_0w_0+c_0^2\bigr)\pmod 9 .
\]
Reducing this modulo $3$ gives $w_0+c_0\equiv0$, i.e.\ $c_0\equiv2w_0\pmod3$;
substituting $c_0\equiv2w_0\pmod 3$ into $w_0^2-c_0w_0+c_0^2$ gives
$3w_0^2\equiv0\pmod3$, so the right-hand side vanishes modulo $9$ and
$2(2w_0-c_0)\equiv0\pmod9$, i.e.\ $c_0\equiv2w_0\pmod 9$.
\end{proof}

\begin{lemma}\label{lem:zsize}
Keep the notation of Lemma~\ref{lem:uzw}. If $|E|\ge6$, then $|z|>3(|E|+1)$.
\end{lemma}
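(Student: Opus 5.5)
The plan is to write $z$ as an explicit quadratic form in $E$ plus a term in $D$, bound the $D$-term crudely, and then use the fact that $E$ can only take Pell values, so that $|E|\ge6$ actually forces $|E|\ge15$.

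\emph{Rewrite $z$.} Use the relation $A^2-2A=3E^2$ of Lemma~\ref{lem:uzw}. The proof of that lemma already notes $z=6E^2-1+A(2-3E)$. Substituting $A=1+sD$ with $s=\pm1$ and $D=\sqrt{3E^2+1}>0$ gives
\[
  z=(6E^2-3E+1)+sD(2-3E).
\]
The first bracket is at least $6E^2-3|E|+1$. The second term has absolute value at most $D(3|E|+2)$. So in every sign configuration
\[
  z\ \ge\ 6E^2-3|E|+1-D\,(3|E|+2).
\]

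\emph{Bound $D$.} Use $D\le\sqrt3\,|E|+\frac{1}{2\sqrt3|E|}$, which follows from $\sqrt{x^2+1}\le x+\frac1{2x}$. For $|E|\ge1$ this gives $D\le\sqrt3|E|+0.3$. Hence
\[
  z\ \ge\ (6-3\sqrt3)E^2-(3+2\sqrt3+0.9)|E|+0.4,
\]
whose leading coefficient is $6-3\sqrt3\approx0.804>0$. Thus $z$ is positive, so $|z|=z$. It then suffices to show
\[
  0.804\,E^2\ >\ (3+3+2\sqrt3+0.9)\,|E|+3,
\]
that is, roughly $0.804E^2>10.4|E|+3$.

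\emph{Main obstacle.} This crude inequality only holds for $|E|\ge14$ or so, not for all $|E|\ge6$. The way around it is that $E$ is not an arbitrary integer. Since $(D,E)$ solves $D^2-3E^2=1$, we have $|E|=E_n$ for some $n\ge0$, and the values $E_n$ are $0,1,4,15,56,\dots$ with $E_{n+1}=4E_n-E_{n-1}$. So $|E|\ge6$ forces $|E|\ge15$.

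At $|E|=15$ the bound reads $0.804\cdot225\approx181>159$. The difference of the two sides is a quadratic with positive leading coefficient whose larger root lies below $15$, so the inequality persists for all larger $|E|$. Making the constants rigorous (for instance replacing $0.804$ by $6-3\sqrt3>4/5$) finishes the proof.

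Alternatively, one could avoid the Pell reduction by sharpening the $D$-estimate for $6\le|E|\le14$. That is unnecessary, since none of $6,\dots,14$ occurs as a value of $|E|$.
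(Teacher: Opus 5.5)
Your proof is correct, but it takes a different route from the paper's.

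\emph{The paper's route.} It uses the same splitting $z=z_1+z_2$ with $z_1=6E^2-3E+1$ and $z_2=sD(2-3E)$, but it never approximates $D$. If $z_2\ge0$ it uses $|z|\ge z_1$. If $z_2<0$ it uses the exact identity $z_1^2-z_2^2=9E^4+6E-3$ to get
\[
  |z|=\frac{z_1^2-z_2^2}{z_1+|z_2|}>\frac{9E^4+6E-3}{2z_1}.
\]
This reduces the claim to two explicit quartic inequalities, one for each sign of $E$, both holding for $|E|\ge6$. That argument never uses that $3E^2+1$ is a square, so it proves the inequality for every integer $E$ with $|E|\ge6$.

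\emph{Your route.} Your triangle-inequality bound treats the signs of $-3E$ and $2-3E$ independently. It keeps the correct leading constant $6-3\sqrt3$ (the paper's denominator $2z_1$ only yields $\tfrac34$), but the linear loss pushes your threshold up to $|E|\ge14$. You then need the arithmetic fact $|E|\in\{0,1,4,15,56,\dots\}$ to cover $6\le|E|\le13$. That is legitimate, since $D^2-3E^2=1$ is part of the notation of Lemma~\ref{lem:uzw}. The paper itself invokes exactly this gap in the proof of Theorem~\ref{thm:exc}, where only $|E|\ge15$ matters.

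\emph{Trade-off.} Your lighter algebra costs nothing in the application. The paper's version, by contrast, is a self-contained inequality independent of the Pell structure. If you prefer not to cite the structure of Pell solutions, you can simply check that $3E^2+1$ is not a square for $6\le|E|\le14$.

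\emph{One small slip.} ``Thus $z$ is positive'' does not follow from the leading coefficient alone, since your lower bound is negative for small $|E|$. You do not need this step: for $|E|\ge15$ your bound already gives $z>3(|E|+1)>0$, hence $|z|>3(|E|+1)$.
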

\begin{proof}
By Lemma~\ref{lem:uzw}, $z=6E^2-1+A(2-3E)$ with $A=1+sD$, so
\[
  z=z_1+z_2,\qquad z_1\coloneqq6E^2-3E+1,\qquad z_2\coloneqq sD(2-3E),
\]
where $D=\sqrt{3E^2+1}>0$. The quadratic $z_1$ is positive (its discriminant is
$9-24<0$).

If $z_2\ge0$, then $|z|\ge z_1$, and $z_1>3(|E|+1)$ for $|E|\ge2$: for $E\ge2$
this reads $6E^2-6E-2>0$, and for $E=-G\le-2$ it reads $6G^2-2>0$.

If $z_2<0$, then $|z_2|=D|2-3E|$ and
\[
  z_1^2-z_2^2=(6E^2-3E+1)^2-(3E^2+1)(3E-2)^2=9E^4+6E-3>0
\]
for $|E|\ge2$, since $3E^4+2E-1>0$ there. Hence $z_1>|z_2|>0$ and, using
$z_1+|z_2|<2z_1$,
\[
  |z|=z_1-|z_2|=\frac{z_1^2-z_2^2}{z_1+|z_2|}
     >\frac{3\bigl(3E^4+2E-1\bigr)}{2\bigl(6E^2-3E+1\bigr)} .
\]
Hence $|z|>3(|E|+1)$ as soon as $3E^4+2E-1>2(|E|+1)(6E^2-3E+1)$. For $E\ge2$
this reduces to $E^4-4E^3-2E^2+2E-1>0$, whose real roots are $-1$ and
$4.365\dots$; for $E=-G$ with $G\ge2$ it reduces to
$3G^4-12G^3-18G^2-10G-3>0$, whose real roots are $-0.697\dots$ and $5.266\dots$.
In both cases $|E|\ge6$ suffices.
\end{proof}

\begin{theorem}\label{thm:exc}
Let $b\neq-3$ be a square-exception. Then $b^3-4$ is non-squarefree; more precisely
there is a prime $p\nmid b$ with $p^2\mid b^3-4$. For $b=-3$, which is
a square-exception, $b^3-4=-31$ is squarefree.
\end{theorem}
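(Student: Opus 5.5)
The strategy is to produce a prime $p\nmid b$ dividing $m=[\intZ[\alpha]:\intZ[\theta]]=|uz|$ and then to invoke Lemma~\ref{lem:reduc}. By Lemma~\ref{lem:uzw} we have $b=uw$, so every prime dividing $u$ divides $b$. The part of $m$ that is coprime to $b$ therefore comes only from $z$, namely from $z/z_b$ where $z_b$ is the $b$-part of $z$. It suffices to show $|z|>z_b$, because then $z$ has a prime factor $p\nmid b$. Such a $p$ divides $m$, and Lemma~\ref{lem:reduc} gives $p^2\mid b^3-4$.

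The comparison of $|z|$ with $z_b$ comes directly from the two preceding lemmas. Lemma~\ref{lem:bpart} (valid when $c=1+E\neq0$) gives $z_b\le 3|1+E|\le 3(|E|+1)$. Lemma~\ref{lem:zsize} gives $|z|>3(|E|+1)$ as soon as $|E|\ge6$. So for $|E|\ge6$ we get $|z|>z_b$, hence $z/z_b$ is an integer of absolute value $>1$ and has a prime factor $p$, which by construction does not divide $b$. Note that $c=0$ means $E=-1$, which falls in the small range treated next, so the hypothesis $c\neq0$ of Lemma~\ref{lem:bpart} is harmless.

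What remains is the finite set of Pell solutions with $|E|\le5$. Since $D^2=3E^2+1$, these are $E\in\{0,\pm1,\pm4\}$ with $D=1,2,7$; the values $E=\pm2,\pm3,\pm5$ give no solution. For each, both signs $s=\pm1$ of $A=1+sD$ give $b=1-A^2+2E(A+1)$.
\begin{itemize}
\item $E=0$: $A\in\{0,2\}$, hence $b\in\{1,-3\}$.
\item $E=\pm1$: $A\in\{3,-1\}$, giving $b\in\{0,-16,0,-16\}$ up to which branch; concretely the values $-16$ and $0$, and also $b=-3$ on the other sign.
\item $E=\pm4$: $A\in\{8,-6\}$, giving $b\in\{9,-135,5,-75\}$; concretely the values $9,5,-75,-135$ in the list after Theorem~\ref{thm:closed}.
\end{itemize}
The degenerate values $0,1$ are excluded by convention. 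For the survivors other than $-3$ one checks directly:
\begin{itemize}
\item $b=5$: $b^3-4=121=11^2$.
\item $b=9$: $b^3-4=725=5^2\cdot29$.
\item $b=-16$: $b^3-4=-4100=-2^2\cdot5^2\cdot41$, and $5\nmid16$.
\item $b=-75$: $b^3-4=-421879=-7^2\cdot8609.8\ldots$; this factorization has to be confirmed or corrected, and the required square prime factor coprime to $75$ identified.
\item $b=-135$: $b^3-4=-2460379$, which likewise has to be factored.
\end{itemize}
Finally, for $b=-3$ we have $b^3-4=-31$, which is squarefree.

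The main obstacle is exactly this bookkeeping of the small cases. The asymptotic argument via Lemmas~\ref{lem:bpart} and~\ref{lem:zsize} is immediate. For the small cases, the cleanest route is to avoid raw factorization and compute $z$ and $u$ directly from $(A,E)$, then read off a prime of $z$ coprime to $b$ and apply Lemma~\ref{lem:reduc}. For example, $b=-75$ has $(A,E)=(-6,4)$, so $z=2\cdot36+12-1+72=155=5\cdot31$, and $31\nmid75$; hence $31^2\mid b^3-4$. The same $z$-computation will be carried out for $b=5,9,-16,-135$. The case $b=-3$ arises from $A=2$, $E=0$, for which $z=3$ and $u=3$, so all of $m$ lies in the $b$-part; this explains why $-3$ is genuinely exceptional.
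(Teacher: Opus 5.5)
This is essentially the paper's proof: you reduce to $z_b<|z|$, obtain it from Lemmas~\ref{lem:bpart} and~\ref{lem:zsize} for large $|E|$, and check the six surviving small values by hand. Your cutoff $|E|\ge6$ versus the paper's $|E|\ge15$ makes no difference, since no Pell solution has $6\le|E|\le14$; and reading a prime $p\nmid b$ off $z$ and invoking Lemma~\ref{lem:reduc} is a valid substitute for the paper's direct factorizations of $b^3-4$. Do clean up that finite check, though: for $E=\pm1$ the value $A=-1$ gives $b=0$ for both signs, so $E=\pm1$ yields only $b\in\{0,-16\}$ and $b=-3$ comes from $E=0$ alone; the factor $7^2$ you propose for $b=-75$ is false, since $(-75)^3-4=-31^2\cdot439$; and for $b=-135$, i.e.\ $(A,E)=(8,-4)$, your $z$-route gives $z=207=3^2\cdot23$, so $p=23$, consistent with $(-135)^3-4=-23^2\cdot4651$.
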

\begin{proof}
Keep the notation of Lemma~\ref{lem:uzw} and let $z_b$ be the $b$-part of $z$, as
in Lemma~\ref{lem:bpart}. If $z_b<|z|$, then $z$ has a prime factor $p\nmid b$;
since $m=|uz|$, that $p$ divides $m$, and Lemma~\ref{lem:reduc} gives
$p^2\mid b^3-4$ with $p\nmid b$, which is the assertion. So it suffices to prove
$z_b<|z|$.

Since $3E^2+1=D^2$ is a square, either $|E|\le4$ or $|E|\ge15$. If $|E|\ge15$,
then $c=1+E\neq0$ and
Lemmas~\ref{lem:bpart} and~\ref{lem:zsize} give
\[
  z_b\le3|1+E|\le3(|E|+1)<|z| ,
\]
as required. If $|E|\le4$, then $|E|\in\{0,1,4\}$, i.e.\ $D\in\{1,2,7\}$, and
$A=1\pm D$ together with $E=\pm|E|$ leaves exactly six admissible pairs $(A,E)$
with $b=uw\neq0,1$, namely
\[
  b=-3,\quad -16,\quad 9,\quad -135,\quad -75,\quad 5 .
\]
For these,
\[
\begin{aligned}
  (-16)^3-4&=-2^2\cdot5^2\cdot41, & 9^3-4&=5^2\cdot29, &
  (-135)^3-4&=-23^2\cdot4651,\\
  (-75)^3-4&=-31^2\cdot439, & 5^3-4&=11^2, & (-3)^3-4&=-31,
\end{aligned}
\]
so the conclusion holds for the first five, with $p=5,5,23,31,11$ respectively,
and fails for $b=-3$ alone.
\end{proof}

\begin{remark}\label{rem:minus3}
The exception $b=-3$ does not affect Theorem~\ref{thm:main}, whose hypotheses
include $3\nmid b$; thus the squarefreeness hypothesis on $b^3-4$ still excludes
every square-exception that the theorem can see. Moreover $z_b=|z|=3$ exactly when
$E=0$ and $s=1$, i.e.\ precisely for $b=-3$: there
$m=|uz|=9$ is a power of $3\mid b$, so $m$ has no prime factor coprime to $b$.
\end{remark}

\subsection{Contrast with the sister family}

For the family~\eqref{eq:sister}, $\varepsilon=1/(1-b(\theta-b))$ has minimal
polynomial $X^3-3(b^4+b^2+1)X^2+3(b^2+1)X-1$, and the square-unit condition is the
system
\[
  A^2-2B=3(b^2+1),\quad B^2-2A=3(b^4+b^2+1).
\]
Kaneko proved that this has only finitely many integer solutions, and Lee--Spearman
\cite[Thm.~1.1]{MR2773123} determined the complete set of six,
\begin{equation*}
(A,B,b)\in\{(0,-3,\pm1),(-1,-1,0),(3,3,0),(8,17,\pm3)\},
\end{equation*}
by reduction to a genus-$0$ curve and finitely many Thue equations. Hence,
$\varepsilon$ is the fundamental unit for all but finitely many $b$, the only cubic
field exceptions being $b=\pm3$; at those values $\varepsilon$ is the \emph{sixth}
power of the fundamental unit \textup(\cite[Rem.~3.1]{MR2773123} for $b=3$;
\cite[p.~22]{MR3363170} for $b=\pm3$\textup), the analogue of the thirteenth power
$\eta_3=\rho^{13}$ at the sporadic $b=3$ of our family \textup(the
two fields are different, $\disc{K_3}=-23$ against $-87$ for the member $b=3$
of~\eqref{eq:sister}\textup). For $f_b(x)$ the corresponding system is the Pell
equation $D^2-3E^2=1$ (Theorem~\ref{thm:iff}), with infinitely many solutions.

\section{A family of biquadratic fields}\label{sec:tower}

We carry out, for $f_b(x)$, the analogue of Kaneko's construction
in~\cite[\S3]{MR3363170}. Its two group-theoretic inputs are field-intrinsic, and
we quote them.

Let $K$ be a non-Galois cubic field, $L$ its Galois closure over $\ratQ$, and
$k\subset L$ the quadratic subfield $\ratQ(\sqrt{\disc{K}})$. If no rational prime is
totally ramified in $K$ then $L/k$ is unramified~\cite[Thm.~1]{MR687621},
whence $\disc{K}=\disc{k}\mathfrak f^2$ for some $\mathfrak f\in\intZ$; if moreover
$3\mid\disc{k}$ then $3$ factors in
$K$ as $3=\mathfrak p_1\mathfrak p_2^2$ with $\mathfrak p_1\neq\mathfrak p_2$.

\begin{lemma}[Yoshida {\cite[Lem.~8]{MR2037577}}; cf.~{\cite[Lem.~3.1]{MR3363170}}]
\label{lem:yoshida1}
With $K,k$ as above ($3\mid\disc{k}$, $L/k$ unramified), suppose there is a unit $\eta$
of $K$ that \textup{(i)} is not the cube of a unit of $K$ and \textup{(ii)}
satisfies $\eta^2\equiv1\pmod{\mathfrak p_1^2\mathfrak p_2^3}$. Then the $3$-class
field tower of $k(\sqrt{-3})$ has length greater than $1$.
\end{lemma}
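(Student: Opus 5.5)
Since this lemma is quoted from Yoshida, the plan below reconstructs the natural argument by Kummer theory over a field containing the cube roots of unity, rather than anything specific to $f_b(x)$. Write $F=k(\sqrt{-3})$ and $M=LF=L(\sqrt{-3})$. Since $L/k$ is unramified cyclic of degree $3$, the extension $M/F$ is unramified cyclic cubic, so $M$ lies in the Hilbert $3$-class field $F^{(1)}$ of $F$. To prove that the $3$-class field tower of $F$ has length $>1$, I will produce an unramified cubic extension $N/M$ such that $N/F$ is Galois but $N\not\subseteq F^{(1)}$. Then $F^{(1)}$ has nontrivial $3$-class group, so $F^{(2)}\neq F^{(1)}$.

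\textbf{Step 1: the Kummer extension.} Take $N=M(\sqrt[3]{\eta})$, which makes sense because $M$ contains $\sqrt{-3}$ and hence the cube roots of unity. I will show that $\eta$ is not a cube in $M$. The degree $[M:K]=4$ is prime to $3$, so if $\eta=\beta^3$ with $\beta\in M$, taking the norm from $M$ to $K$ gives $\eta^4=\Norm_{M/K}(\beta)^3$. Hence $\eta=\eta^4/\eta^3$ is a cube in $K$, contradicting hypothesis (i). Therefore $[N:M]=3$.

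\textbf{Step 2: unramifiedness.} Because $\eta$ is a unit, $N/M$ can ramify only above $3$. Replacing $\eta$ by $\eta^4=(\eta^2)^2$, which changes neither $N$ nor the condition in Step 1, I may assume $\eta\equiv1\pmod{\mathfrak p_1^2\mathfrak p_2^3}$. The standard criterion says that $x^3-\eta$ gives an extension unramified at a prime $\mathfrak P\mid3$ of $M$ if $\eta\equiv\xi^3\pmod{\mathfrak P^{3e(\mathfrak P/3)/2+1}}$, that is, modulo $(1-\zeta_3)^3\mathfrak P$ locally. So I must compute the ramification of $\mathfrak p_1$ and $\mathfrak p_2$ in $M/K$. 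Here $\mathfrak p_2$ is already ramified over $\ratQ$ with index $2$, while the ramification index of $\mathfrak p_1$ in $M$ comes from $\sqrt{-3}$ and from $k$. Because $3\mid\disc{k}$ and $L/k$ is unramified, every prime above $3$ in $M$ should have ramification index exactly $2$ over $3$. The exponents $2$ and $3$ then translate into the required congruence modulo $(1-\zeta_3)^3$ times a prime, possibly after multiplying $\eta$ by a suitable cube. I expect this local bookkeeping to be the main obstacle, in particular matching the exponents $2,3$ to what is needed at primes of $M$ above $\mathfrak p_1$ and above $\mathfrak p_2$ separately. I would first do it in the completions, where $K_{\mathfrak p_1}=\ratQ_3$ and $K_{\mathfrak p_2}$ is a ramified quadratic extension of $\ratQ_3$.

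\textbf{Step 3: non-abelianness.} Since $\eta\in K$ and $\mathrm{Gal}(M/F)$ permutes the conjugates of $\eta$, the Galois closure of $N$ over $F$ is $N'=M(\sqrt[3]{\eta},\sqrt[3]{\eta'},\sqrt[3]{\eta''})$, which is still unramified over $M$. I will show that $N'\not\subseteq F^{(1)}$. Take $\sigma$ a generator of $\mathrm{Gal}(M/F)$ and $\tau\in\mathrm{Gal}(M/\ratQ)$ with $\tau\sigma\tau^{-1}=\sigma^{-1}$, where $\tau$ fixes $K$ and acts on $\zeta_3$. One checks that $\sigma$ acts on the Kummer group $\langle\eta,\eta^\sigma,\eta^{\sigma^2}\rangle M^{\times3}/M^{\times3}$ nontrivially, because $\eta\notin F$ and the norm $\eta\eta^\sigma\eta^{\sigma^2}=1$. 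If $N'$ were abelian over $F$, Kummer theory over $M$ would force $\sigma$ to act on this group through the cyclotomic character, which is trivial on $\mathrm{Gal}(M/F)$ since $\zeta_3\in F$. That would make the class of $\eta$ fixed by $\sigma$, so $\eta^{\sigma-1}$ would be a cube in $M$. Combining this with the norm relation and the fact that $3\nmid[M:K]$ gives that $\eta$ is a cube in $K$, again contradicting (i). So $N'/F$ is non-abelian and unramified, and hence the $3$-class field tower of $F$ has length greater than $1$.
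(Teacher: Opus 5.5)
The paper gives no proof of this lemma; it is quoted from Yoshida. So your argument has to stand on its own. Its architecture is sound. Step~1 is correct, and once $N/M$ is unramified, $N'=M(\sqrt[3]{\eta},\sqrt[3]{\eta^\sigma})$ is an unramified Galois $3$-extension of the totally imaginary field $F$, so non-abelianness of $N'/F$ does give a tower of length $>1$.

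The genuine gap is the last inference of Step~3. From $\eta^{\sigma}=\eta\gamma^3$ and $\eta\eta^\sigma\eta^{\sigma^2}=1$ you only get $(\eta\gamma^2\gamma^\sigma)^3=1$. The fact $3\nmid[M:K]$ only lets you descend cube-ness of elements of $K$ from $M$ to $K$. Neither forces $\eta\in K^{\times3}$, and the inference is false at this level of generality. Let $K$ be defined by $x^3+3x+3$ with root $\alpha$. The unit $u=\alpha+1=-\alpha^3/3$ has norm $1$ and satisfies $u^{\sigma-1}=(\alpha^\sigma/\alpha)^3$. Yet $u$ is not a cube in $K$, since otherwise $K=\ratQ(\sqrt[3]{3})$, whose discriminant is $-243\neq-351$.

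What Step~3 never uses is the hypothesis that no prime is totally ramified in $K$, and that is exactly the missing input. One way to finish is as follows.
\begin{enumerate}
\item Since $[M:L]=2$, you may take $\gamma\in L$.
\item Then $\Norm_{L/k}(\gamma)^3=1$, and $\mu_3(k)=1$ because $k\neq\ratQ(\sqrt{-3})$ (that field has class number $1$). So Hilbert~90 for $L/k$ gives $\eta=\beta\delta^3$ with $\beta\in k^\times$ and $\delta\in L^\times$.
\item Applying $\Norm_{L/K}$ yields $\eta^2\in\ratQ^\times K^{\times3}$, hence $\eta=m\xi^3$ with $m\in\ratQ^\times$ and $\xi\in K^\times$.
\item Then $m\intring{K}=(\xi)^{-3}$. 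Every rational prime has a prime factor in $K$ of ramification index $1$ or $2$, so $3\mid v_p(m)$ for all $p$.
\item Hence $m$ is a rational cube and $\eta$ is the cube of a unit of $K$, contradicting (i).
\end{enumerate}
Likewise, ``because $\eta\notin F$'' is no reason for $\sigma$ to act nontrivially modulo cubes.

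Step~2 is left undone, and the criterion you quote would not carry it through. Congruence to a cube modulo $(1-\zeta_3)^3\mathfrak P$ is the condition for $\mathfrak P$ to \emph{split} in $N/M$.
\begin{itemize}
\item Above $\mathfrak p_1$, which ramifies in $M/K$, $\mathfrak p_1^2$ becomes $\mathfrak P^4=(1-\zeta_3)^3\mathfrak P$ locally, so your condition holds there.
\item But $\mathfrak p_2$ is unramified in $M/K$, so $\mathfrak p_2^3$ only gives congruence modulo $\mathfrak P^3=(1-\zeta_3)^3$. Your sufficient condition is not available there.
\end{itemize}
What you need is the sharp criterion. Locally write $\eta^2=1+(1-\zeta_3)^3c$ with $c$ integral, and $\sqrt[3]{\eta^2}=1+(1-\zeta_3)y$. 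Using $3=-\zeta_3^2(1-\zeta_3)^2$, one finds that $y$ satisfies a monic equation congruent to the separable $y^3-y-c$ modulo $1-\zeta_3$, so the extension is unramified. Applied to $\eta^2$, which generates the same Kummer extension as $\eta$, this settles Step~2 at every prime above $3$ with no cube adjustment.
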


\begin{lemma}[Yoshida {\cite[\S3]{MR1970518}}; cf.~{\cite[Lem.~3.2]{MR3363170}}]
\label{lem:yoshida2}
Let $\eta$ be a unit of $K$ with minimal polynomial $X^3+AX^2+BX-1$. Then
$\eta\equiv1\pmod{\mathfrak p_1^2\mathfrak p_2^3}$ if and only if $27\mid A+3$ and
$3^5\mid A+B$.
\end{lemma}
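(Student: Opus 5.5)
The plan is to work $3$-adically through the local decomposition $K\otimes_{\ratQ}\ratQ_3\cong\ratQ_3\times F$. Here $F/\ratQ_3$ is the tamely ramified quadratic extension coming from $3=\mathfrak p_1\mathfrak p_2^2$: the factor $\ratQ_3$ is the completion at $\mathfrak p_1$, which has $e=f=1$, and $F$ is the completion at $\mathfrak p_2$, with uniformizer $\pi$ and $v_3(\pi)=1/2$. Over $\ratQ_3$ the minimal polynomial $g(X)=X^3+AX^2+BX-1$ of $\eta$ then factors as $(X-\eta_1)\,\kappa(X)$. Here $\eta_1\in\intZ_3$ is the image of $\eta$ at $\mathfrak p_1$, and $\kappa$ is the irreducible quadratic over $\ratQ_3$ whose roots $\eta_2,\eta_3$ are the image of $\eta$ in $F$ and its conjugate. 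The congruence $\eta\equiv1\pmod{\mathfrak p_1^2\mathfrak p_2^3}$ is then equivalent to the pair of local conditions $v_3(\eta_1-1)\ge2$ and $v_\pi(\eta_2-1)\ge3$. Since $\eta_3$ is the $F/\ratQ_3$-conjugate of $\eta_2$, the second condition is the same as $v_3(\eta_i-1)\ge3/2$ for $i=2,3$.

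Next I shift by $1$. With $x_i=\eta_i-1$ one has
\[
g(X+1)=X^3+(A+3)X^2+(2A+B+3)X+(A+B),
\]
so that $\sum x_i=-(A+3)$ and $\prod x_i=-(A+B)$. The middle coefficient equals $(A+3)+(A+B)$, so it is controlled by the other two.

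For the forward direction, assume $v_3(x_1)\ge2$ and $v_3(x_2),v_3(x_3)\ge3/2$. Then
\[
v_3(A+B)=v_3(x_1)+v_3(x_2x_3)\ge2+3=5 .
\]
For $A+3$, note that $x_2+x_3=\mathrm{Tr}_{F/\ratQ_3}(x_2)$ has $v_3\ge3/2$ and lies in $\ratQ_3$, so its valuation is at least $2$. Write $t=x_2+x_3$ and $n=x_2x_3$; then $v_3(t)\ge2$ and $v_3(n)\ge3$. The norm relation $\eta_1\eta_2\eta_3=1$ reads $(1+x_1)(1+t+n)=1$, so
\[
x_1+t=-(x_1t+n+x_1n).
\]
The right-hand side has $v_3\ge\min(4,3,5)=3$, hence $27\mid A+3$. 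This norm trick is the one non-obvious step: the naive bound on $x_1+t$ only gives $9\mid A+3$, and the relation $\Norm(\eta)=1$ is what upgrades it to $27$.

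For the converse, assume $27\mid A+3$ and $3^5\mid A+B$, so that also $27\mid 2A+B+3$. I use the $3$-adic Newton polygon of $g(X+1)$. The coefficient of $X^k$ has $3$-adic valuation at least $5,3,3,0$ for $k=0,1,2,3$. Each of these points lies on or above the line through $(3,0)$ of slope $-3/2$: at $k=0,1,2$ that line has heights $4.5,\,3,\,1.5$. Hence every root $x_i$ satisfies $v_3(x_i)\ge3/2$. For $x_1\in\ratQ_3$ the valuation is an integer, so $v_3(x_1)\ge2$. For $x_2$ we get $v_\pi(x_2)\ge3$. These are exactly the two local conditions, which proves the lemma.

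The main care needed, besides the norm trick above, is justifying the identification of $\eta_1$ with the $\mathfrak p_1$-component and of $\eta_2,\eta_3$ with the $\mathfrak p_2$-component of $\eta$ under the local decomposition.
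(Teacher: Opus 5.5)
Your proof is correct. The paper gives no argument of its own for this lemma: it is quoted from Yoshida \cite[\S3]{MR1970518} via Kaneko \cite[Lem.~3.2]{MR3363170}. Your proposal is therefore a self-contained substitute, not a variant of a proof in the text.

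The local reduction is sound. The factorization $g=(X-\eta_1)\kappa$ over $\mathbb{Q}_3$ is the product of the characteristic polynomials of $\eta$ on the two factors of $K\otimes_{\ratQ}\mathbb{Q}_3\cong\mathbb{Q}_3\times F$. Hence the roots of $\kappa$ are $\eta_2$ and its $F/\mathbb{Q}_3$-conjugate. Irreducibility of $\kappa$ follows from separability of $g$, but you do not actually need it. Both directions check out.

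For the converse you can even bypass the Newton polygon. If $v_3(x)<3/2$, then $x^3$ strictly dominates the other three terms of $g(x+1)$, whose valuations are at least $3+2v_3(x)$, $3+v_3(x)$ and $5$.

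Your ``norm trick'' is, in terms of the elementary symmetric functions of the $x_i$, exactly the identity $A+3=e_2+e_3$, i.e.\ $2A+B+3=(A+3)+(A+B)$, which you had already written down. So a single identity drives both directions.

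That trick is genuinely needed. The $3$-part of the different of $K$ is $\mathfrak p_2$, so $\mathrm{Tr}_{K/\ratQ}(\mathfrak p_1^2\mathfrak p_2^3)$ is contained in $9\intZ$ but not in $27\intZ$. A trace estimate using only $\eta-1\in\mathfrak p_1^2\mathfrak p_2^3$ therefore stops at $9\mid A+3$, as you observed locally.

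What your route buys over the citation is economy of input. It uses only the splitting type $3=\mathfrak p_1\mathfrak p_2^2$ and $\Norm(\eta)=1$, with no integral basis or explicit generators of $\mathfrak p_1,\mathfrak p_2$. It therefore applies verbatim to the fields $K_b$ of Proposition~\ref{prop:towerlocal}.
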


Throughout this section fix an integer $b$ with
\begin{equation}\label{eq:towerhyp}
  v_3(b)\ \text{even and}\ \ge2,\quad b\neq9
\end{equation}
(so $9\mid b$). Set $c_b=b(b^3-4)$; since
$\poldisc{f_b(x)}=-27b^3(b^3-4)=9b^2\cdot(-3c_b)$, the resolvent field is
$k=\ratQ(\sqrt{-3c_b})$ and
\[
  F_b\coloneqq k(\sqrt{-3})=\ratQ\bigl(\sqrt{-3},\ \sqrt{c_b}\bigr).
\]
It is biquadratic: $c_b=b^4-4b$ lies strictly between consecutive squares, since
$(b^2-1)^2<b^4-4b<(b^2)^2$ for $b\ge3$ and $(c^2)^2<c^4+4c<(c^2+1)^2$ with
$c=-b\ge2$; so $c_b$ is not a square, and neither is $-3$ nor $-3c_b<0$.

\begin{proposition}\label{prop:towerlocal}
Under~\eqref{eq:towerhyp}: \textup{(i)} $3=\mathfrak p_1\mathfrak p_2^2$ in $K_b$
and $3\mid\disc{k}$; \textup{(ii)} no rational prime is totally ramified in $K_b$, so
$K_bk/k$ is an unramified cyclic cubic extension.
\end{proposition}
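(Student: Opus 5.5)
The plan is to read off the local factorization of every rational prime in $K_b$ from the prime-by-prime analysis already carried out in the proofs of Theorems~\ref{thm:idxprime} and~\ref{thm:idx23}, and then to feed this into the Llorente--Nart criterion quoted at the start of this section. Neither part should need new computation. I expect the hypothesis $b\neq9$ to play no role here: it is reserved for condition~(i) of Lemma~\ref{lem:yoshida1}, since $\eta_9$ is a cube by Lemma~\ref{lem:cube9}.

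For (i), I would use the case $p=3$, $3\mid b$ of Theorem~\ref{thm:idx23}. Put $w=v_3(b)$, which is even and $\ge2$, so $w'=w-1$ is odd and $\ge1$. In that proof the Newton polygon of $h_B(Y)$ splits $K_b\otimes\ratQ_3$ into two pieces. The first is a $\ratQ_3$-rational linear factor, which gives an unramified degree-one prime $\mathfrak p_1$. The second is a quadratic factor $\kappa(Y)$ with $v_3(\poldisc{\kappa(Y)})=w'$ odd, so it defines a tamely ramified quadratic extension with $e=2$, giving $\mathfrak p_2^2$. Hence $3=\mathfrak p_1\mathfrak p_2^2$ and $v_3(\disc{K_b})=1$. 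Next I would note that $\disc{K_b}$ and $\poldisc{f_b(x)}$ differ by the square $a^2$. So $k=\ratQ(\sqrt{-3c_b})=\ratQ(\sqrt{\disc{K_b}})$, and $\disc{K_b}=\disc{k}\mathfrak f^2$ up to the usual conductor relation. Since $v_3(\disc{K_b})=1$ is odd, the exponent of $3$ cannot come only from the square $\mathfrak f^2$, and therefore $3\mid\disc{k}$.

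For (ii), I would go through the primes one at a time and show each is not totally ramified.
\begin{itemize}
\item For $p\ge5$: in both cases of Theorem~\ref{thm:idxprime} (whether $p\mid b$ or $p\nmid b$), $f_b(x)$ has an unramified linear factor over $\ratQ_p$, so $p$ is not totally ramified.
\item For $p=3$: this is part (i), since $\mathfrak p_1\neq\mathfrak p_2$.
\item For $p=2$ with $b$ odd: $v_2(\poldisc{f_b(x)})=0$, so $2$ is unramified.
\item For $p=2$ with $b$ even: the Newton polygon in the proof of Theorem~\ref{thm:idx23} again yields a $\ratQ_2$-rational linear factor, so $2$ is not totally ramified.
\end{itemize}
Finally, $K_b$ is non-Galois because $\disc{K_b}<0$ is not a square. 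So with $L=K_bk$ the Galois closure, \cite[Thm.~1]{MR687621} gives that $L/k$ is unramified, and it is cyclic of degree $3$.

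The only point needing care is the bookkeeping that identifies $k$ with $\ratQ(\sqrt{\disc{K_b}})$ and deduces $3\mid\disc{k}$ from the parity of $v_3(\disc{K_b})$. This is where the evenness of $v_3(b)$ is essential: if $v_3(b)$ were odd, then $w'$ would be even, $3$ would be unramified in $K_b$, and $3\nmid\disc{k}$.
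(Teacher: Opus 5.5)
Your proposal is correct and follows essentially the paper's route. Part (i) comes from the case $p=3$, $3\mid b$ of Theorem~\ref{thm:idx23}: you read it directly off the Newton polygon of $h_B(Y)$ and use the parity of $v_3(\disc{K_b})=1$, while the paper uses the index formula and the parity of $v_3(-3c_b)=1+w$; part (ii) comes from the prime-by-prime factorizations combined with Llorente--Nart. One small slip: for $p\ge5$ with $p\nmid b(b^3-4)$, $f_b(x)$ need not have a $\ratQ_p$-rational linear factor, but such $p$ does not divide $\poldisc{f_b(x)}$ and is therefore unramified, so your conclusion is unaffected.
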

\begin{proof}
(i) By Theorem~\ref{thm:idx23}, $3\mid b$ with $w=v_3(b)$ gives
$v_3(a)=\lfloor3(w+1)/2\rfloor$, so
$v_3(\disc{K_b})=v_3(\poldisc{f_b(x)})-2v_3(a)=3(w+1)-2\lfloor3(w+1)/2\rfloor$,
which equals $1$ when $w$ is even. Thus $3$ is tamely, partially ramified:
$3=\mathfrak p_1\mathfrak p_2^2$. And $v_3(-3c_b)=1+w$ is odd, so $3$ ramifies in
$k$, i.e.\ $3\mid\disc{k}$.
(ii) By Theorems~\ref{thm:idxprime}--\ref{thm:idx23} every prime factors as a
linear times a (possibly split) quadratic, never as $(1^3)$: $2$ is unramified or
partially ramified; $3=\mathfrak p_1\mathfrak p_2^2$ by part~(i), which is exactly
where the hypothesis $9\mid b$ of~\eqref{eq:towerhyp} is needed, since for
$3\nmid b$ the prime $3$ \emph{is} totally ramified (Theorem~\ref{thm:idx23}); a
prime $p\ge5$ with $p\mid b$ has $e\in\{1,2\}$; and a prime $p\ge5$ with
$p\mid b^3-4$ comes from a double root of $f_b(x)\bmod p$, giving $e\in\{1,2\}$
\emph{whatever} the value of $v_p(b^3-4)$. So no prime is totally ramified and
$L/k$ is unramified. \textup(The argument is sign-free and applies verbatim to $b<0$.\textup)
\end{proof}

\begin{theorem}\label{thm:tower}
Under~\eqref{eq:towerhyp}, the field
$F_b=\ratQ(\sqrt{-3},\sqrt{b(b^3-4)})$ has a $3$-class field tower of length
greater than $1$. There are
infinitely many such $b$, of either sign.
\end{theorem}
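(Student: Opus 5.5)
The plan is to apply Yoshida's criterion, Lemma~\ref{lem:yoshida1}, with $K=K_b$ and $\eta=\eta_b$. Proposition~\ref{prop:towerlocal} already supplies the structural hypotheses under~\eqref{eq:towerhyp}. Part~(i) gives $3=\mathfrak p_1\mathfrak p_2^2$ in $K_b$ and $3\mid\disc{k}$. Part~(ii) gives that $L/k$ is unramified. We also need $K_b$ to be non-Galois. This holds because $\poldisc{f_b(x)}<0$ is not a square, so the resolvent field $k=\ratQ(\sqrt{-3c_b})$ is a genuine quadratic field. Since $F_b=k(\sqrt{-3})$, the conclusion of Lemma~\ref{lem:yoshida1} is exactly the assertion of the theorem. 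It therefore remains to check conditions (i) and (ii) of that lemma for $\eta_b$.

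For condition~(i), Lemma~\ref{lem:cube9} says that $\eta_b$ is the cube of a unit of $K_b$ only when $b=9$, and this value is excluded in~\eqref{eq:towerhyp}. This is the only place where $b\neq9$ is used.

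For condition~(ii), it suffices to show $\eta_b\equiv1\pmod{\mathfrak p_1^2\mathfrak p_2^3}$, since then $\eta_b^2\equiv1$ as well. I will use Lemma~\ref{lem:yoshida2} with the minimal polynomial $h(X)=X^3-PX^2+QX-1$ of $\eta_b$, written in that lemma's normalization as $X^3+A'X^2+B'X-1$ with $A'=-P=-3(b^2+b+1)$ and $B'=Q=3(b+1)$. A direct computation gives
\[
  A'+3=-3b(b+1),\qquad A'+B'=-3b^2 .
\]
Since $9\mid b$, we get $v_3(A'+3)\ge3$ and $v_3(A'+B')=1+2v_3(b)\ge5$. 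So $27\mid A'+3$ and $3^5\mid A'+B'$, and Lemma~\ref{lem:yoshida2} gives the congruence. The only delicate point I anticipate is the sign convention in Lemma~\ref{lem:yoshida2}. If it turns out to be phrased for a polynomial normalized differently, I would apply it to $\epsilon_b=-\eta_b^{-1}$, whose minimal polynomial is $g(x)=x^3+3(b+1)x^2+3(b^2+b+1)x+1$, or to $-\epsilon_b$. The same two expressions $-3b(b+1)$ and $\pm3b^2$ appear up to sign in each case. Since being a cube is unaffected by inversion or sign, condition~(i) is untouched by this switch.

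Finally, infinitude is immediate. Take $b=81k$ with $3\nmid k$ and $k$ of either sign. Then $v_3(b)=4$ is even and at least $2$, and $b\neq9$. More simply, $b=9k$ with $3\nmid k$ and $k\neq1$ also works, which gives infinitely many $b>0$ and infinitely many $b<0$. I expect no real obstacle here; the substantive work has been absorbed into Proposition~\ref{prop:towerlocal} and Lemma~\ref{lem:cube9}. The one step to double-check carefully is the matching of coefficient conventions in Lemma~\ref{lem:yoshida2}.
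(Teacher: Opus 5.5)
Your proposal is correct and follows the paper's proof essentially step for step: Proposition~\ref{prop:towerlocal} supplies the structural hypotheses, Lemma~\ref{lem:cube9} (using $b\neq9$) gives non-cubeness, and Lemma~\ref{lem:yoshida2} is applied directly to $h(X)$, which is already of the form $X^3+AX^2+BX-1$ with $A+3=-3b(b+1)$ and $A+B=-3b^2$. Your fallback to $\epsilon_b$ or $\eta_b^{-1}$ is therefore unnecessary, and for $\eta_b^{-1}$ one would actually get $A+3=-3b$ (same $3$-adic valuation, so harmless). For infinitude the paper uses $b\equiv9\pmod{54}$ rather than your $b=9k$, and it additionally notes that $|\disc{F_b}|\to\infty$, so one obtains infinitely many distinct fields $F_b$ and not merely infinitely many $b$.
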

\begin{proof}
The minimal polynomial of $\eta_b$ is $X^3-PX^2+QX-1$, i.e.\ $X^3+AX^2+BX-1$ with
$A=-3(b^2+b+1)$, $B=3(b+1)$; thus $A+3=-3b(b+1)$ and $A+B=-3b^2$. As $9\mid b$ and
$3\nmid b+1$, $27\mid A+3$; as $v_3(b)\ge2$, $3^4\mid b^2$ and $3^5\mid A+B$. By
Lemma~\ref{lem:yoshida2}, $\eta_b\equiv1\pmod{\mathfrak p_1^2\mathfrak p_2^3}$, hence
$\eta_b^2\equiv1$; and by Lemma~\ref{lem:cube9} (as $b\neq9$) $\eta_b$ is not a
cube. Proposition~\ref{prop:towerlocal} supplies the remaining hypotheses on
$K_b,k$, so Lemma~\ref{lem:yoshida1} gives that the $3$-class field tower of
$k(\sqrt{-3})=F_b$ has length $>1$. Finally, the set
$\set{b | v_3(b)=2,\ b\neq9}$ (e.g.\ $b\equiv9\pmod{54}$, $b\neq9$) is infinite,
and distinct $b$ give infinitely many distinct fields $F_b$ since
$|\disc{F_b}|\to\infty$.
\end{proof}

\begin{remark}[Contrast with the sister family; the exception $b=9$]
The cube analysis of Lemma~\ref{lem:cube9} lands on \emph{the same} Thue
equation~\eqref{eq:thue} as Kaneko's for the sister
family~\cite[Lem.~2.2]{MR3363170}; only the final map to $b$ differs, sending the
solutions to $b\in\{0,9\}$ here and to $b\in\{0,\pm3\}$ there. As Kaneko's
construction excludes the cube-locus of his unit, ours excludes $b=9$, and the
exclusion is needed: at $b=9$ one has $\eta_9=\eta_0^6$, the class group
$\mathrm{Cl}(F_9)$ has $3$-rank $1$, and the tower of $F_9$ terminates. The parity
condition on $v_3(b)$ has no counterpart in~\cite{MR3363170}: here $3$ is
\emph{unramified} in $K_b$ when $v_3(b)$ is odd, so $3=\mathfrak p_1\mathfrak p_2^2$
forces $v_3(b)$ even. Numerically, $\mathrm{Cl}(F_b)$ has $3$-rank $\ge2$ for all
$17$ values of $b$ that satisfy~\eqref{eq:towerhyp} with $|b|\le100$, in line with
the remark of~\cite[p.~23]{MR3363170}.
\end{remark}

\section*{Use of artificial intelligence tools}
During the preparation of this work the authors used Anthropic's Claude in order
to generate PARI/GP scripts for testing hypotheses, to cross-check the output of
the numerical experiments, to assist with literature searches, and to improve the
English of the manuscript. The mathematical content, including every proof, is
the authors' own. The authors reviewed and verified all AI-generated output,
including every reference and all final results, edited the content as needed,
and take full responsibility for the content of this article.

\bibliography{../biblio}

\providecommand{\bysame}{\leavevmode\hbox to3em{\hrulefill}\thinspace}
\providecommand{\MR}{\relax\ifhmode\unskip\space\fi MR }
\providecommand{\MRhref}[2]{%
  \href{http://www.ams.org/mathscinet-getitem?mr=#1}{#2}
}
\providecommand{\href}[2]{#2}
\begin{thebibliography}{10}

\bibitem{Baker2004}
Alan Baker, \emph{Experiments on the {$abc$}-conjecture}, Publ. Math. Debrecen
  \textbf{65} (2004), no.~3-4, 253--260.

\bibitem{Degert1958}
G\"unter Degert, \emph{{\"U}ber die {B}estimmung der {G}rundeinheit gewisser
  reell-quadratischer {Z}ahlk\"orper}, Abh. Math. Sem. Univ. Hamburg
  \textbf{22} (1958), 92--97.

\bibitem{MR160744}
B.~N. Delone and D.~K. Faddeev, \emph{The theory of irrationalities of the
  third degree}, Translations of Mathematical Monographs, vol.~10, American
  Mathematical Society, Providence, RI, 1964. \MR{160744}

\bibitem{MR56635}
P.~Erd\"os, \emph{Arithmetical properties of polynomials}, J. London Math. Soc.
  \textbf{28} (1953), 416--425. \MR{56635}

\bibitem{MR718935}
G.~Faltings, \emph{Endlichkeitss\"atze f\"ur abelsche {Variet\"aten} \"uber
  {Zahlk\"orpern}}, Invent. Math. \textbf{73} (1983), no.~3, 349--366, Erratum:
  \emph{Invent. Math.} \textbf{75} (1984), 381.

\bibitem{FleckingerVerant1995}
V.~Fleckinger and M.~V\'erant, \emph{Families of non-{G}alois quartic fields},
  J. Number Theory \textbf{54} (1995), no.~2, 261--273. \MR{1354051}

\bibitem{Helfgott2014}
H.~A. Helfgott, \emph{Square-free values of $f(p)$, $f$ cubic}, Acta Math.
  \textbf{213} (2014), no.~1, 107--135.

\bibitem{MR335469}
Makoto Ishida, \emph{Fundamental units of certain algebraic number fields},
  Abh. Math. Sem. Univ. Hamburg \textbf{39} (1973), 245--250. \MR{335469}

\bibitem{MR959788}
\bysame, \emph{Existence of an unramified cyclic extension and congruence
  conditions}, Acta Arith. \textbf{51} (1988), no.~1, 75--84. \MR{959788}

\bibitem{MR2037576}
Kan Kaneko, \emph{Integral bases and fundamental units of certain cubic number
  fields}, SUT J. Math. \textbf{39} (2003), no.~2, 117--124. \MR{2037576}

\bibitem{MR3363170}
\bysame, \emph{On units of a family of cubic number fields}, SUT J. Math.
  \textbf{50} (2014), no.~1, 19--24. \MR{3363170}

\bibitem{KimuraCode}
Iwao Kimura, \emph{{PARI/GP} scripts for ``{On an infinite family of cubic
  fields with explicit fundamental units}''}, GitHub repository, 2026, Version
  v1.2. Available from
  \url{https://github.com/iwaokimura/cubic-fields-explicit-units}.

\bibitem{LaishramShorey2012}
Shanta Laishram and T.~N. Shorey, \emph{Baker's explicit {$abc$}-conjecture and
  applications}, Acta Arith. \textbf{155} (2012), no.~4, 419--429.

\bibitem{MR2773123}
Paul~D. Lee and Blair~K. Spearman, \emph{A {D}iophantine system and a problem
  on cubic fields}, Int. Math. Forum \textbf{6} (2011), no.~1-4, 141--146.
  \MR{2773123}

\bibitem{MR687621}
Pascual Llorente and Enric Nart, \emph{Effective determination of the
  decomposition of the rational primes in a cubic field}, Proc. Amer. Math.
  Soc. \textbf{87} (1983), no.~4, 579--585. \MR{687621}

\bibitem{Louboutin2015}
St\'ephane~R. Louboutin, \emph{Fundamental units for orders generated by a
  unit}, Publ. Math. Besan\c{c}on Alg\`ebre Th\'eorie Nr. \textbf{2015} (2015),
  41--68.

\bibitem{Louboutin2016}
\bysame, \emph{Fundamental units for orders of unit rank $1$ and generated by a
  unit}, Banach Center Publ. \textbf{108} (2016), 173--189, Volume {\it
  Algebra, logic and number theory}, Polish Acad. Sci. Inst. Math., Warsaw.

\bibitem{Nagell1930}
Trygve Nagell, \emph{Zur {T}heorie der kubischen {I}rrationalit\"aten}, Acta
  Math. \textbf{55} (1930), 33--65.

\bibitem{MR49:8956}
J.-P. Serre, \emph{A course in arithmetic}, Springer-Verlag, New York, 1973,
  Translated from the French, Graduate Texts in Mathematics, No. 7. \MR{49
  \#8956}

\bibitem{Shanks1974}
Daniel Shanks, \emph{The simplest cubic fields}, Math. Comp. \textbf{28}
  (1974), no.~128, 1137--1152.

\bibitem{PARI2}
{The PARI Group}, \emph{{PARI/GP} version \texttt{2.17.4}}, 2026, Available
  from \url{https://pari.math.u-bordeaux.fr/}.

\bibitem{Verant1997}
M.~V\'erant, \emph{Famille d'extensions quartiques non galoisiennes et
  totalement imaginaires}, Publ. Math. Fac. Sci. Besan\c{c}on Th\'eor. Nombres
  \textbf{1994/95--1995/96} (1997), 14 pp.

\bibitem{MR1970518}
Eiji Yoshida, \emph{On the 3-class field tower of some biquadratic fields},
  Acta Arith. \textbf{107} (2003), no.~4, 327--336. \MR{1970518}

\bibitem{MR2037577}
\bysame, \emph{On the unit groups and the ideal class groups of certain cubic
  number fields}, SUT J. Math. \textbf{39} (2003), no.~2, 125--136.
  \MR{2037577}

\end{thebibliography}

\end{document}